\newtheorem{thm}{Theorem}[section]
\newtheorem{lem}[thm]{Lemma}
\newtheorem{prop}[thm]{Proposition}
\newtheorem{cor}[thm]{Corollary}
\theoremstyle{definition}
\newtheorem{df}[thm]{Definition}
\newtheorem{exam}[thm]{Example}
\theoremstyle{remark}
\newtheorem{rem}[thm]{Remark}
\theoremstyle{question}
\numberwithin{equation}{section}
\title{Computing the Lusztig--Vogan Bijection}
\author{David B Rush}
\email{dbr@alum.mit.edu}
\dedicatory{To the memory of Prof. John W. Chun (1930--2016)}
\date{\today}
\begin{document}

\begin{abstract}
Let $G$ be a connected complex reductive algebraic group with Lie algebra $\mathfrak{g}$.  The Lusztig--Vogan bijection relates two bases for the bounded derived category of $G$-equivariant coherent sheaves on the nilpotent cone $\mathcal{N}$ of $\mathfrak{g}$.  One basis is indexed by $\Lambda^+$, the set of dominant weights of $G$, and the other by $\Omega$, the set of pairs $(\mathcal{O}, \mathcal{E})$ consisting of a nilpotent orbit $\mathcal{O} \subset \mathcal{N}$ and an irreducible $G$-equivariant vector bundle $\mathcal{E} \rightarrow \mathcal{O}$.  The existence of the Lusztig--Vogan bijection $\gamma \colon \Omega \rightarrow \Lambda^+$ was proven by Bezrukavnikov, and an algorithm computing $\gamma$ in type $A$ was given by Achar.  Herein we present a combinatorial description of $\gamma$ in type $A$ that subsumes and dramatically simplifies Achar's algorithm.  
\end{abstract}

\maketitle 

\tableofcontents

\section*{Overview}
In 1989, Lusztig concluded his landmark four-part study of cells in affine Weyl groups \cite{Lusztig1, Lusztig2, Lusztig3, Lusztig4} with an almost offhand remark:

\begin{quote}
``\ldots we obtain a (conjectural) bijection between $X_{\text{dom}}$ and the set of pairs $(u, \rho)$, (up to $G$-conjugacy) with $u \in G$ unipotent and $\rho$ an irreducible representation of $Z_G(u)$.''
\end{quote}

By $X_{\text{dom}}$, Lusztig meant the set of dominant weights of a connected complex reductive algebraic group $G$.  (We refer to this set as $\Lambda^+$.)  We denote by $\Omega$ the set of pairs $(\mathcal{C}, V)$, where $\mathcal{C} \subset G$ is a unipotent conjugacy class and $V$ is an irreducible representation of the centralizer $Z_G(u)$ for $u \in \mathcal{C}$, which is uniquely determined by $\mathcal{C}$ up to inner isomorphism.

So elementary an assertion was Lusztig's claim of a bijection between $\Lambda^+$ and $\Omega$ that its emergence from so deep an opus was in retrospect an obvious indication that the close connection between the sets in question transcends the setting in which it was first glimpsed.  

Indeed, Vogan's work on associated varieties \cite{Vogan} led him to the same supposition only two years later.  Let $\mathfrak{g}$ denote the Lie algebra of $G$, and let $\mathcal{N}^*$ denote the nilpotent cone of the dual space $\mathfrak{g}^*$.  Fixing a compact real form $K$ of $G$ with Lie algebra $\mathfrak{k}$, let $\mathfrak{C}$ be the category of finitely generated $(S(\mathfrak{g}/\mathfrak{k}), K)$-modules for which each prime ideal in the support corresponds under the Nullstellensatz to a subvariety of $(\mathfrak{g}/\mathfrak{k})^* \subset \mathfrak{g}^*$ contained in $\mathcal{N}^*$.  In 1991, Vogan \cite{Vogan} showed that $\Omega$ --- in an alternate incarnation as the set of pairs $(\mathcal{O}, V)$, where $\mathcal{O} \subset \mathcal{N}^*$ is a coadjoint orbit and $V$ is an irreducible representation of the stabilizer $G_X$ for $X \in \mathcal{O}$ --- indexes a basis for the Grothendieck group $K_0(\mathfrak{C})$.  That $\Lambda^+$ also indexes such a basis pointed to an uncharted bijection.  

Further evidence for the existence of what has come to be known as the \textit{Lusztig--Vogan bijection} was uncovered by Ostrik \cite{Ostrik}, who was first to consider $\Omega$ and $\Lambda^+$ in the context in which the conjecture was ultimately confirmed by Bezrukavnikov \cite{Bezrukav} --- that of the equivariant $K$-theory of the nilpotent cone of $\mathfrak{g}$.  Let $\mathcal{N}$ denote this nilpotent cone.  Ostrik examined $(G \times \mathbb{C}^*)$-equivariant coherent sheaves on $\mathcal{N}$.  Subsequently, Bezrukavnikov examined $G$-equivariant coherent sheaves on $\mathcal{N}$ and proved Lusztig and Vogan's claim.

Let $\mathfrak{D} := \textbf{D}^b(\operatorname{Coh}^G(\mathcal{N}))$ be the bounded derived category of $G$-equivariant coherent sheaves on $\mathcal{N}$.  Bezrukavnikov \cite{Bezrukav} showed not only that $\Omega$ and $\Lambda^+$ both index bases for the Grothendieck group $K_0(\mathfrak{D})$, but also that there exists a bijection $\gamma \colon \Omega \rightarrow \Lambda^+$ uniquely characterized by the following property: For any total order $\leq$ on $\Lambda^+$ compatible with the root order, if $\leq$ is imposed on $\Omega$ via $\gamma^{-1}$, then the change-of-basis matrix is upper triangular.  

In his proof, Bezrukavnikov did not construct $\gamma$.  Instead, he exhibited a $t$-structure on $\mathfrak{D}$, the heart of which is a quasi-hereditary category with irreducible objects indexed by $\Omega$ and costandard objects indexed by $\Lambda^+$.  This entailed the existence of $\gamma$, but left open the question of how $\gamma$ is computed.\footnote{In type A, the existence of the Lusztig--Vogan bijection also follows from Xi's work on the based ring of the affine Weyl group \cite{Xi}, in which he proved a more general conjecture of Lusztig \cite{Lusztig4}.  }  

In his 2001 doctoral thesis \cite{Achart}, Achar set $G := GL_n(\mathbb{C})$ and formulated algorithms to compute inverse maps $\Omega \rightarrow \Lambda^+$ and $\Lambda^+ \rightarrow \Omega$ that yield an upper triangular change of basis in $K_0(\mathfrak{C})$.  Then, in a follow-up article \cite{Acharj}, he showed that his calculations carry over to $K_0(\mathfrak{D})$ and therefore that his bijection agrees with Bezrukavnikov's.      

Achar's algorithm for $\gamma^{-1}$ is elegant and simple.  Unfortunately, his algorithm for $\gamma$ is a series of nested while loops, set to terminate upon reaching a configuration satisfying a list of conditions.  Progress is tracked by a six-part monovariant, which is whittled down as the algorithm runs.  Achar \cite{Achart, Acharj} proved that his algorithm halts on every input after finitely many steps.  But it does not directly describe the image of a given pair $(\mathcal{O}, V) \in \Omega$.    

In this article, we present an algorithm that directly describes the terminal configuration returned by Achar's algorithm on an input in $\Omega$, bypassing all of Achar's while loops and obviating the need for an accompanying monovariant.  The upshot is a combinatorial algorithm to compute $\gamma$ for $G = GL_n(\mathbb{C})$ that encompasses and expedites Achar's algorithm and holds the prospect of extension to other classical groups.\footnote{A conjectural algorithm, to compute $\gamma$ for \textit{even} nilpotent orbits in type $C$, is featured in Chapter 7 of the author's 2017 doctoral thesis \cite{Rush}.  }  
\vfill \eject

\section*{Index of Notation}

\begin{tabularx}{6.0in}{l X l} 
	$G$ & connected complex reductive algebraic group & \S 1.1 \\
	$\mathfrak{g}$ & Lie algebra of $G$ & \S 1.1 \\
	$\mathcal{N}$ & nilpotent cone of $\mathfrak{g}$ & \S 1.1 \\
	$\mathfrak{D}$ & bounded derived category of $G$-equivariant coherent sheaves on $\mathcal{N}$ & \S 1.1 \\
	$X$ & nilpotent element & \S 1.1 \\
	$\mathcal{O}_X$ & nilpotent orbit of $X$ & \S 1.1 \\
	$G_X$ & stabilizer of $X$ & \S 1.1 \\
	$(\mathcal{O}_X, V)$ & pair consisting of nilpotent orbit $\mathcal{O}_X$ and irreducible $G_X$-representation $V$ & \S 1.1 \\
	$IC_{(\mathcal{O}_X, V)}$ & intersection cohomology complex associated to $(\mathcal{O}_X, V)$ & \S 1.1 \\
	$\Omega$ & equivalence classes of pairs $(\mathcal{O}_X, V)$ & \S 1.1 \\
	$A_{\lambda}$ & complex associated to weight $\lambda$ via Springer resolution & \S 1.1 \\
	$\Lambda$ & weight lattice of $G$ & \S 1.1 \\
	$\Lambda^+$ & dominant weights of $G$ & \S 1.1 \\
	$\gamma(\mathcal{O}_X,  V)$ & Lusztig--Vogan bijection & \S 1.1 \\
	$A^P_{\lambda}$ & complex associated to weight $\lambda$ via $T^*(G/P) \rightarrow \overline{\mathcal{O}}$ & \S 1.1 \\
	$[\alpha_1, \ldots, \alpha_{\ell}]$ & partition associated to $X$ & \S 1.2 \\
	$[k_1^{a_1}, \ldots, k_m^{a_m}]$ & distinct parts of $\alpha$ with multiplicities & \S 1.2 \\
	$G_X^{\text{red}}$ & reductive part of $G_X$ & \S 1.2 \\
	$[\alpha^*_1, \ldots, \alpha^*_s]$ & conjugate partition to $\alpha$ & \S 1.2 \\
	$P_X$ & parabolic subgroup associated to $X$ & \S 1.2 \\
	$L_X$ & Levi factor of $P_X$ & \S 1.2 \\
	$L_X^{\text{ref}}$ & Levi subgroup of $L_X$ containing $G_X^{\text{red}}$ & \S 1.2 \\
	$X_{\alpha}$ & representative element of $\mathcal{O}_X$ & \S 1.3 \\
	$\mathcal{O}_{\alpha}$ & $\mathcal{O}_{X_{\alpha}}$ & \S 1.3 \\
	$V^{\nu(t)}$ & irreducible $GL_{a_t}$-representation with highest weight $\nu(t)$ & \S 1.3 \\
	$V^{(\nu(1), \ldots, \nu(m))}$ & $V^{\nu(1)} \boxtimes \cdots \boxtimes V^{\nu(m)}$ & \S 1.3 \\
	$[\nu_1, \ldots, \nu_{\ell}]$ & integer sequence & \S 1.3 \\
	$G_{\alpha}$ & $G_{X_{\alpha}}$ & \S 1.3 \\
	$G_{\alpha}^{\text{red}}$ & $G_{X_{\alpha}}^{\text{red}}$ & \S 1.3 \\
	$V^{(\alpha, \nu)}$ & $G_{\alpha}$-representation arising from $V^{(\nu(1), \ldots, \nu(m))}$ & \S 1.3 \\
	$P_{\alpha}$ & $P_{X_{\alpha}}$ & \S 1.3 \\
	$L_{\alpha}$ & $L_{X_{\alpha}}$ & \S 1.3 \\
	$\Lambda^+_{\alpha}$ & dominant weights of $L_{\alpha}$ & \S 1.3 \\
	$W^{\lambda^j}$ & irreducible $GL_{\alpha^*_j}$-representation with highest weight $\lambda^j$ & \S 1.3 \\
	$W^{\lambda}$ & $W^{\lambda^1} \boxtimes \cdots \boxtimes W^{\lambda^s}$ & \S 1.3 \\
	$A^{\alpha}_{\lambda}$ & $A^{P_{\alpha}}_{\lambda}$ & \S 1.3 \\
	$W_{\alpha}$ & Weyl group of $L_{\alpha}$ & \S 1.3 \\
	$\rho_{\alpha}$ & half-sum of positive roots of $L_{\alpha}$ & \S 1.3 \\
	$W$ & Weyl group of $G$ & \S 1.3 \\
	$\operatorname{dom}(\mu)$ & unique dominant weight of $G$ in $W$-orbit of $\mu$ & \S 1.3 \\
	$\Omega_{\alpha}$ & dominant integer sequences with respect to $\alpha$ & \S 1.3 \\
	$\Lambda^+_{\alpha, \nu}$ & dominant weights $\mu$ of $L_{\alpha}$ such that $V^{(\alpha, \nu)}$ occurs in decomposition of $W^{\mu}$ as direct sum of irreducible $G_{\alpha}^{\text{red}}$-representations & \S 1.3 \\
	$\mathfrak{A}(\alpha, \nu)$ & integer-sequences version of algorithm & \S 1.5 \\
	$\mathsf{A}(\alpha, \nu)$ & Achar's algorithm & \S 1.5 \\
	$\mathcal{A}(\alpha, \nu)$ & weight-diagrams version of algorithm & \S 1.5 \\
	$\operatorname{dom}(\iota)$ & rearrangement of entries of $\iota$ in weakly decreasing order & \S 2.1 \\
	$\mathcal{C}_{-1}(\alpha, \nu, i, I_a, I_b)$ & candidate-ceiling function & \S 2.2 \\ 
	$\mathcal{R}_{-1}(\alpha, \nu)$ & ranking-by-ceilings function & \S 2.2 \\
	$\sigma$ & permutation & \S 2.2 \\
	$\mathbb{Z}^{\ell}_{\text{dom}}$ & weakly decreasing integer sequences of length $\ell$ & \S 2.2 \\
	$\mathcal{U}_{-1}(\alpha, \nu, \sigma)$ & column-ceilings function & \S 2.2 \\
	$\mathfrak{A}_{\operatorname{iter}}(\alpha, \nu)$ & iterative integer-sequences version of algorithm & \S 2.2 \\
	$D_{\alpha}$ & weight diagrams of shape-class $\alpha$ & \S 3 \\
	$X$ & weight diagram & \S 3 \\
	$X^j_i$ & $i^{\text{th}}$ entry from top in $j^{\text{th}}$ column of $X$ & \S 3 \\
	$EX$ & map $D_{\alpha} \rightarrow D_{\alpha}$ & \S 3 \\
	$(X, Y)$ & diagram pair & \S 3 \\
	$\kappa(X)$ & map $D_{\alpha} \rightarrow \Omega_{\alpha}$ & \S 3 \\
	$h(X)$ & map $D_{\alpha} \rightarrow \Lambda^+_{\alpha}$ & \S 3 \\
	$\eta(Y)$ & map $D_{\alpha} \rightarrow \Lambda^+$ & \S 3 \\
	$D_{\ell}$ & weight diagrams with $\ell$ rows & \S 4.1 \\
	$X_{i,j}$ & entry of $X$ in $i^{\text{th}}$ row and $j^{\text{th}}$ column & \S 4.1 \\
	$\mathcal{S}(\alpha, \sigma, \iota)(i)$ & row-survival function & \S 4.1 \\
	$\mathcal{k}$ & number of branches & \S 4.1 \\
	$\ell_x$ & number of rows surviving into $x^{\text{th}}$ branch & \S 4.1 \\
	$\mathcal{C}_1(\alpha, \nu, i, I_a, I_b)$ & candidate-floor function & \S 4.2 \\
	$\mathcal{R}_1(\alpha, \nu)$ & ranking-by-floors function & \S 4.2 \\
	$\mathcal{U}_1(\alpha, \nu, \sigma)$ & column-floors function & \S 4.2 \\
	$\alpha^*_j$ & $|\lbrace i: \alpha_i \geq j \rbrace|$ & \S 4.2 \\
	$\#(X,i)$ & number of boxes in $i^{\text{th}}$ row of $X$ & \S 4.3 \\
	$\Sigma(X,i)$ & sum of entries in $i^{\text{th}}$ row of $X$ & \S 4.3 \\
	$\mathcal{P}(\alpha, \iota)(i)$ & row-partition function & \S 5 \\
	$\operatorname{Cat}$ & diagram-concatenation function & \S 5 \\
	$\mathcal{T}_j(X)$ & column-reduction function & \S 5 \\

\end{tabularx}
\vfill \eject

\section{Introduction}

\subsection{Sheaves on the nilpotent cone}
Let $G$ be a connected complex reductive algebraic group with Lie algebra $\mathfrak{g}$.  An element $X \in \mathfrak{g}$ is \textit{nilpotent} if $X \in [\mathfrak{g}, \mathfrak{g}]$ and the endomorphism $\operatorname{ad} X \colon \mathfrak{g} \rightarrow \mathfrak{g}$ is nilpotent.  The \textit{nilpotent cone} $\mathcal{N}$ comprises the nilpotent elements of $\mathfrak{g}$.  Since $\mathcal{N}$ is a subvariety of $\mathfrak{g}$ (cf. Jantzen \cite{Jantzen}, section 6.2), we may consider the bounded derived category $\mathfrak{D} := \textbf{D}^b(\operatorname{Coh}^G(\mathcal{N}))$ of $G$-equivariant coherent sheaves on $\mathcal{N}$.   

Let $X \in \mathfrak{g}$ be nilpotent, and write $\mathcal{O}_X \subset \mathcal{N}$ for the orbit of $X$ in $\mathfrak{g}$ under the adjoint action of $G$.  We refer to $\mathcal{O}_X$ as the \textit{nilpotent orbit} of $X$.  

Write $G_X$ for the stabilizer of $X$ in $G$.  To an irreducible representation $V$ of $G_X$ corresponds the $G$-equivariant vector bundle \[E_{(\mathcal{O}_X, V)} := G \times_{G_X} V \rightarrow \mathcal{O}_X\] with projection given by $(g, v) \mapsto \operatorname{Ad}(g) (X)$.  Its sheaf of sections $\mathcal{E}_{(\mathcal{O}_X, V)}$ is a $G$-equivariant coherent sheaf on $\mathcal{O}_X$.  To arrive at an object in the derived category $\mathfrak{D}$, we build the complex $\mathcal{E}_{(\mathcal{O}_X, V)}[\frac{1}{2} \dim \mathcal{O}_X]$ consisting of $\mathcal{E}_{(\mathcal{O}_X, V)}$ concentrated in degree $-\frac{1}{2} \dim \mathcal{O}_X$.  Then we set \[IC_{(\mathcal{O}_X, V)} := j_{!*}\left(\mathcal{E}_{(\mathcal{O}_X, V)}\left[\frac{1}{2} \dim \mathcal{O}_X\right]\right) \in \mathfrak{D},\] where $j_{!*}$ denotes the Goresky--Macpherson extension functor obtained from the inclusion $j \colon \mathcal{O}_X \rightarrow \mathcal{N}$ and Bezrukavnikov's $t$-structure on $\mathfrak{D}$.  

Let ${\Omega}^{\text{pre}}$ be the set of pairs $\lbrace (\mathcal{O}_X, V) \rbrace_{X \in \mathcal{N}}$ consisting of a nilpotent orbit $\mathcal{O}_X$ and an irreducible representation $V$ of the stabilizer $G_X$.  We assign an equivalence relation to ${\Omega}^{\text{pre}}$ by stipulating that $(\mathcal{O}_X, V) \sim (\mathcal{O}_Y, W)$ if there exists $g \in G$ and an isomorphism of vector spaces $\pi \colon V \rightarrow W$ such that $\operatorname{Ad}(g) X = Y$ and the group isomorphism $\operatorname{Ad}(g) \colon G_X \rightarrow G_Y$ manifests $\pi$ as an isomorphism of $G_X$-representations.  

Note that $(\mathcal{O}_X, V) \sim (\mathcal{O}_Y, W)$ implies $\mathcal{O}_X = \mathcal{O}_Y$ and $E_{(\mathcal{O}_X, V)} \cong E_{(\mathcal{O}_Y, W)}$.  Thus, the map associating the intersection cohomology complex $IC_{(\mathcal{O}_X, V)}$ in $\mathfrak{D}$ to the equivalence class of $(\mathcal{O}_X, V)$ in $\Omega^{\text{pre}}$ is well-defined.  Set $\Omega := \Omega^{\text{pre}} / \sim$.  Then $\Omega$ indexes the family of complexes $\lbrace IC_{(\mathcal{O}_X, V)} \rbrace_{(\mathcal{O}_X, V) \in \Omega}$.  (The notation $(\mathcal{O}_X, V) \in \Omega$ is shorthand for the equivalence class represented by $(\mathcal{O}_X, V)$ belonging to $\Omega$.)  

On the other hand, weights of $G$ also give rise to complexes in $\mathfrak{D}$.  To see this, let $B$ be a Borel subgroup of $G$, and fix a maximal torus $T \subset B$.  A weight $\lambda \in \operatorname{Hom}(T, \mathbb{C}^*)$ is a character of $T$, from which we obtain a one-dimensional representation $\mathbb{C}^{\lambda}$ of $B$ by stipulating that the unipotent radical of $B$ act trivially.  Then \[L_{\lambda} := G \times_B \mathbb{C}^{\lambda} \rightarrow G/B \] is a $G$-equivariant line bundle on the flag variety $G/B$.  Its sheaf of sections $\mathcal{L}_{\lambda}$ is a $G$-equivariant coherent sheaf on $G/B$ which may be pulled back to the cotangent bundle $T^*(G/B)$ along the projection $p \colon T^*(G/B) \rightarrow G/B$.  

From the Springer resolution of singularities $\pi \colon T^*(G/B) \rightarrow \mathcal{N}$, we obtain the direct image functor $\pi_{*}$, and then the total derived functor $R\pi_{*}$.  We set \[A_{\lambda} := R\pi_{*} p^{*} \mathcal{L}_{\lambda} \in \mathfrak{D}.\]

Let $\Lambda := \operatorname{Hom}(T, \mathbb{C}^*)$ be the weight lattice of $G$, and let $\Lambda^+ \subset \Lambda$ be the subset of dominant weights with respect to $B$.  The family of complexes $\lbrace A_{\lambda} \rbrace_{\lambda \in \Lambda^+}$ is sufficient to generate the Grothendieck group $K_0(\mathfrak{D})$, so it is this family which we compare to $\lbrace IC_{(\mathcal{O}_X, V)} \rbrace_{(\mathcal{O}_X, V) \in \Omega}$.  Entailed in the relationship is the Lusztig--Vogan bijection.  

\begin{thm}[Bezrukavnikov \cite{Bezrukav}, Corollary 4] \label{bez}
The Grothendieck group $K_0(\mathfrak{D})$ is a free abelian group for which both the sets $\lbrace [IC_{(\mathcal{O}_X, V)}] \rbrace_{(\mathcal{O}_X, V) \in \Omega}$ and $\lbrace [A_{\lambda}] \rbrace_{\lambda \in \Lambda^+}$ form bases.  There exists a unique bijection $\gamma \colon \Omega \rightarrow \Lambda^+$ such that \[\left[IC_{(\mathcal{O}_X, V)}\right] \in \operatorname{span} \lbrace [A_{\lambda}] : \lambda \leq \gamma(\mathcal{O}_X, V) \rbrace, \] where the partial order on the weights is the root order, viz., the transitive closure of the relations $\upsilon \lessdot \omega$ for all $\upsilon, \omega \in \Lambda$ such that $\omega - \upsilon$ is a positive root with respect to $B$.  

Furthermore, the coefficient of $[A_{\gamma(\mathcal{O}_X, V)}]$ in the expansion of $[IC_{(\mathcal{O}_X, V)}]$ is $\pm 1$.  
\end{thm}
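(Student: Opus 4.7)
The plan is to deduce the theorem from the existence of an exotic $t$-structure on $\mathfrak{D}$ whose heart is a quasi-hereditary category with simple objects indexed by $\Omega$ and standard/costandard objects indexed by $\Lambda^+$. First, I would construct such a $t$-structure by gluing perverse coherent $t$-structures across the stratification $\mathcal{N} = \bigsqcup_{X} \mathcal{O}_X$ by nilpotent orbits. With perversity $p(\mathcal{O}_X) = \tfrac{1}{2}\dim \mathcal{O}_X$, the complexes $IC_{(\mathcal{O}_X,V)}$ arise as intermediate extensions of the shifted equivariant vector bundles on each orbit; they lie in the heart and are simple, so their classes form a $\mathbb{Z}$-basis of the Grothendieck group.

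Second, I would verify that the heart is quasi-hereditary, so that each simple $IC_{(\mathcal{O}_X,V)}$ has an associated standard $\Delta_{(\mathcal{O}_X,V)}$ and costandard $\nabla_{(\mathcal{O}_X,V)}$, with $[IC_{(\mathcal{O}_X,V)}]$ equal to $[\Delta_{(\mathcal{O}_X,V)}]$ plus lower terms in a suitable partial order refining the closure order on orbits. Quasi-heredity reduces to cohomological vanishing of Ext-groups between standards attached to incomparable strata, which in turn should follow from standard properties of $G$-equivariant coherent sheaves on smooth orbits together with the recollement data furnished by the gluing procedure.

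Third, I would identify $\lbrace [A_\lambda] \rbrace_{\lambda \in \Lambda^+}$, up to sign, with the classes of (co)standard objects in the heart, thereby producing both the second basis and the bijection $\gamma$. Applying the projection formula to the Springer resolution $\pi \colon T^*(G/B) \rightarrow \mathcal{N}$ should express $R\pi_{*} p^{*} \mathcal{L}_{\lambda}$ as an iterated extension of costandards whose distinguished leading term selects a unique pair $(\mathcal{O}_X, V) \in \Omega$; one then sets $\gamma(\mathcal{O}_X,V) := \lambda$. The $\pm 1$ coefficient is forced by the diagonal entry of the resulting triangular change-of-basis matrix, and a separate combinatorial check ensures that the quasi-hereditary order is refined by (rather than merely comparable to) the root order on $\Lambda^+$, so that the triangularity statement holds in the form demanded by the theorem.

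The principal obstacle is the construction of the $t$-structure with the correct perversity, since $\mathcal{N}$ is singular and the middle perversity is not available from general principles in the $G$-equivariant coherent setting. Bezrukavnikov's route circumvents this via a categorical action of the affine braid group on $\mathfrak{D}$ together with an exceptional collection indexed by the affine Weyl group; extracting the bijection $\gamma$ from this apparatus — and in particular verifying compatibility with the \emph{root} order rather than merely with the intrinsic quasi-hereditary order — is the step I expect to absorb the bulk of the technical work, everything else being formal consequences of the quasi-hereditary formalism.
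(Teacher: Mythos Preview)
The paper does not prove this theorem at all; it is quoted as a result of Bezrukavnikov (Corollary~4 of \cite{Bezrukav}) and used as a black box throughout. The only commentary the paper offers is in the Overview, where it summarizes Bezrukavnikov's argument in one sentence: he exhibited a $t$-structure on $\mathfrak{D}$ whose heart is quasi-hereditary with irreducible objects indexed by $\Omega$ and costandard objects indexed by $\Lambda^+$, which entails the existence of $\gamma$ without constructing it.

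Your sketch is consistent with that one-sentence summary and fleshes it out reasonably, so there is no conflict with the paper. A couple of points of calibration, though. First, in \cite{Bezrukav} the key technical device is the construction of a \emph{quasi-exceptional set} in $\mathfrak{D}$ indexed by $\Lambda^+$ (not an affine braid group action or an exceptional collection indexed by the affine Weyl group; those appear in Bezrukavnikov's later work on the noncommutative Springer resolution). The quasi-exceptional set yields the $t$-structure and the quasi-hereditary heart directly. Second, your ``separate combinatorial check'' that the quasi-hereditary order is refined by the root order is in fact built into Bezrukavnikov's construction: the objects $A_\lambda$ are ordered by $\lambda$ from the outset, and the quasi-exceptional axioms are verified with respect to that order. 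So the compatibility with the root order is not an afterthought but the organizing principle.
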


The association of the complex $A_{\lambda}$ to the weight $\lambda$ evinces a more general construction of objects in $\mathfrak{D}$ that is instrumental in identifying the bijection $\gamma$.  Let $P \supset B$ be a parabolic subgroup, and let $U_P$ be its unipotent radical.  Denote the Lie algebra of $U_P$ by $\mathfrak{u}_P$.  The unique nilpotent orbit $\mathcal{O}$ for which $\mathcal{O} \cap \mathfrak{u}_P$ is an open dense subset of $\mathfrak{u}_P$ is called the \textit{Richardson orbit} of $P$, and there exists a canonical map $\pi \colon T^*(G/P) \rightarrow \overline{\mathcal{O}}$ analogous to the Springer resolution.

Let $L$ be the Levi factor of $P$ that contains $T$.  From a weight $\lambda \in \Lambda$ dominant with respect to the Borel subgroup $B_L := B \cap L$ of $L$, we obtain an irreducible $L$-representation $W^{\lambda}$ with highest weight $\lambda$, which we may regard as a $P$-representation by stipulating that $U_P$ act trivially.  Then \[M_{\lambda} := G \times_P W^{\lambda} \rightarrow G/P\] is a $G$-equivariant vector bundle on $G/P$.  Pulling back its sheaf of sections $\mathcal{M}_{\lambda}$ to the cotangent bundle $T^*(G/P)$ along the canonical projection $p \colon T^*(G/P) \rightarrow G/P$, and then pushing the result forward onto $\mathcal{N}$, we end up with the complex \[A^P_{\lambda} := R\pi_* p^*\mathcal{M}_{\lambda} \in \mathfrak{D}.\]

Note that the Richardson orbit of $B$ is the \textit{regular nilpotent orbit} $\mathcal{O}^{\text{reg}}$, uniquely characterized by the property $\overline{\mathcal{O}^{\text{reg}}} = \mathcal{N}$.  The Levi factor of $B$ containing $T$ is $T$ itself.  Thus, for all $\lambda \in \Lambda$, the complex $A^B_{\lambda}$ is defined and coincides with $A_{\lambda}$, meaning that the above construction specializes to that of $\lbrace A_{\lambda} \rbrace_{\lambda \in \Lambda}$, as we claimed.  

\subsection{The nilpotent cone of $\mathfrak{gl}_n$}
Henceforward we set $G := GL_n(\mathbb{C})$.  Then $\mathfrak{g} = \mathfrak{gl}_n(\mathbb{C})$.  Let $X \in \mathfrak{g}$ be nilpotent.  The existence of the Jordan canonical form implies the existence of positive integers $\alpha_1 \geq \cdots \geq \alpha_{\ell}$ summing to $n$ and vectors $v_1, \ldots, v_{\ell}$ such that \[\mathbb{C}^n = \operatorname{span} \lbrace X^j v_i : 1 \leq i \leq \ell, 0 \leq j \leq \alpha_i -1 \rbrace \] and $X^{\alpha_i} v_i = 0$ for all $i$ (cf. Jantzen \cite{Jantzen}, section 1.1).

Express the partition $\alpha := [\alpha_1, \ldots, \alpha_{\ell}]$ in the form $[k_1^{a_1}, \ldots, k_m^{a_m}]$, where $k_1 > \cdots > k_m$ are the distinct parts of $\alpha$ and $a_t$ is the multiplicity of $k_t$ for all $1 \leq t \leq m$.  Let $V_t$ be the $a_t$-dimensional vector space spanned by the set $\lbrace v_i : \alpha_i = k_t \rbrace$.  Define a map \[\varphi_X \colon GL(V_1) \times \cdots \times GL(V_m) \rightarrow G_X\] by $\varphi_X(g_1, \ldots, g_m)(X^j v_i) := X^j g_t v_i$ for $v_i \in V_t$.  

Note that $\varphi_X$ is injective.  Let $G_X^{\text{red}}$ be the image of $\varphi_X$, and let $R_X$ be the unipotent radical of $G_X$.  From Jantzen \cite{Jantzen}, sections 3.8--3.10, we see that $G_X^{\text{red}}$ is reductive and $G_X = G_X^{\text{red}} R_X$.  Since $R_X$ acts trivially in any irreducible $G_X$-representation, specifying an irreducible representation of $G_X$ is equivalent to specifying an irreducible representation of $G_X^{\text{red}}$, which means specifying an irreducible representation of $GL_{a_1, \ldots, a_m}  := GL_{a_1} \times \cdots \times GL_{a_m}$.  

Let $\alpha^* = [\alpha^*_1, \ldots, \alpha^*_s]$ be the conjugate partition to $\alpha$, where $s := \alpha_1$.  For all $1 \leq j \leq s$, let $V(j)$ be the $\alpha^*_j$-dimensional vector space spanned by the set $\lbrace X^{\alpha_i - j} v_i : \alpha_i \geq j \rbrace$, and set $V^{(j)} := V(1) \oplus \cdots \oplus V(j)$.  

Define subgroups $L_X \subset P_X \subset G$ by \[P_X := \lbrace g \in G : g\big(V^{(j)}\big) = V^{(j)} \text{ for all } 1\leq j \leq s \rbrace\] and \[L_X := \lbrace g \in G : g\big(V(j)\big) = V(j) \text{ for all } 1 \leq j \leq s \rbrace.\]

Since $P_X$ is the stabilizer of the partial flag \[\lbrace 0 \rbrace \subset V^{(1)} \subset \cdots \subset V^{(s)} = \mathbb{C}^n,\] it follows immediately that $P_X \subset G$ is a parabolic subgroup and $L_X \subset P_X$ is a Levi factor.  Furthermore, the Richardson orbit of $P_X$ is none other than $\mathcal{O}_X$ (cf. Jantzen \cite{Jantzen}, section 4.9).  For general $G$, this implies that the connected component of the identity in $G_X$ is contained in $P_X$.  In our case $G = GL_n$, the conclusion is stronger: $G_X \subset P_X$, and $G_X^{\text{red}} \subset L_X$.  (That we could find $P_X$ so that $\mathcal{O}_X$ is its Richardson orbit is also due to the assumption that $G$ is of type $A$.)

The claim $G_X \subset P_X$ follows from the observation that $V^{(j)}$ is the kernel of $X^j$ for all $1 \leq j \leq s$.  To see $G_X^{\text{red}} \subset L_X$, we find a Levi subgroup of $L_X$ that contains $G_X^{\text{red}}$.  Since $X^{k_t - j}V_t \subset V(j)$, the direct sum decomposition \[\mathbb{C}^n = \bigoplus_{t = 1}^m \bigoplus_{j = 1}^{k_t} X^{k_t - j} V_t\] is a refinement of the decomposition $\mathbb{C}^n = \bigoplus_{j=1}^s V(j)$.  Set \[L_X^{\text{ref}} := \lbrace g \in G : g(X^{k_t-j} V_t) = X^{k_t-j} V_t \text{ for all } 1 \leq t \leq m, 1 \leq j \leq k_t \rbrace.\]  Then $L_X^{\text{ref}} \subset L_X$, and the inclusion $G_X^{\text{red}} \subset L_X^{\text{ref}}$ follows directly from the definition of $\varphi_X$.  

Let $\chi_X$ be the isomorphism \[L_X^{\text{ref}} \rightarrow \prod_{t = 1}^m \prod_{j=1}^{k_t} GL(X^{k_t -j} V_t)\] given by $g \mapsto \prod_{t=1}^m (g|_{X^{k_t-1} V_t}, \ldots, g|_{V_t})$, and let $\psi_X$ be the isomorphism \[L_X \rightarrow GL(V(1)) \times \cdots \times GL(V(s))\] given by $g \mapsto \left(g|_{V(1)}, \ldots, g|_{V(s)}\right)$.  

From the analysis above, we may conclude that the composition \[\psi_X \varphi_X \colon GL_{a_1, \ldots, a_m} \rightarrow GL_{\alpha^*_1, \ldots, \alpha^*_s}\] factors as the composition \[\chi_X \varphi_X \colon GL_{a_1, \ldots, a_m}  \rightarrow \prod_{t=1}^m (GL_{a_t})^{k_t}\] (which coincides with the product, over all $1 \leq t \leq m$, of the diagonal embeddings $GL_{a_t} \rightarrow (GL_{a_t})^{k_t}$), followed by the product, over all $1 \leq j \leq s$, of the inclusions $\prod_{t : k_t \geq j} GL_{a_t} \rightarrow GL_{\alpha^*_j}$.  This description of $\psi_X \varphi_X$ allows us to detect the appearance of certain $[IC_{(\mathcal{O}_X, V)}]$ classes in the expansion on the $\Omega$-basis of a complex arising via the resolution $T^*(G/P_X) \rightarrow \overline{\mathcal{O}_X}$ (cf. Lemma~\ref{omega}).  

\begin{exam} \label{colors}
	\setcounter{MaxMatrixCols}{20}
Set $n := 11$.  Then $G = GL_{11}$.  Set 
\[X := 
\begin{bmatrix}
	0 & 1 & 0 & 0 &  &  &  &  &  &  &  \\
	0 & 0 & 1 & 0 &  &  &  &  &  &  &  \\
	0 & 0 & 0 & 1 &  &  &  &  &  &  &  \\
	0 & 0 & 0 & 0 &  &  &  &  &  &  &  \\
	  &  &  &  & 0 & 1 & 0 &  &  &  &  \\
	  &  &  &  & 0 & 0 & 1 &  &  &  &  \\
	  &  &  &  & 0 & 0 & 0 &  &  &  &  \\
	  &  &  &  &  &  &  & 0 & 1 &  &  \\
	  &  &  &  &  &  &  & 0 & 0 &  &  \\
   	  &  &  &  &  &  &  &  &  & 0 &  \\
	  &  &  &  &  &  &  &  &  &  & 0 
\end{bmatrix}.\]

The partition encoding the sizes of the Jordan blocks of $X$ is $\alpha = [4,3,2,1,1]$.  The Young diagram of $\alpha$ is depicted in Figure~\ref{rowleng}.  Each Jordan block of $X$ corresponds to a row of $\alpha$.  

\begin{figure}[h]
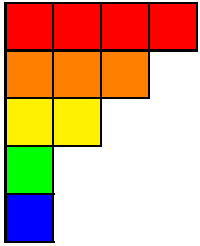

	\begin{ytableau}
		*(red) & *(red) & *(red) & *(red) \\
		*(orange) & *(orange) & *(orange) \\
		*(yellow) & *(yellow)\\
		*(green)\\
		*(blue)
	\end{ytableau}
	\caption{The Young diagram of $\alpha$, colored by rows}
	\label{rowleng}

\end{figure}

We may express $\alpha$ in the form $[4^1,3^1,2^1,1^2]$, where $4>3>2>1$ are the distinct parts of $\alpha$.  Then $G_X^{\text{red}}$ is the image under the isomorphism $\varphi_X$ of \[ GL_1 \times GL_1 \times GL_1 \times GL_2.\]  Each factor of the preimage corresponds to a distinct part of $\alpha$ (cf. Figure~\ref{rowmult}).  

\begin{figure}[h]
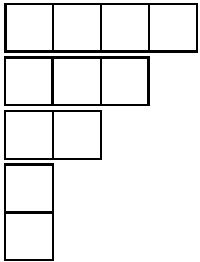

	\ytableausetup{centertableaux}
	\begin{ytableau}
		{} &  &  &  \\ 
	\end{ytableau}
	\\ \vspace{0.01in} \hspace{-0.31in}
	\begin{ytableau}
		{} &  &  \\
	\end{ytableau}
	\\ \vspace{0.01in} \hspace{-0.56in}
	\begin{ytableau}
		{} & \\
	\end{ytableau}
	\\ \vspace{0.01in} \hspace{-0.81in}
	\begin{ytableau}
		{} \\
		{}
	\end{ytableau}
\caption{The Young diagram of $\alpha$, partitioned by distinct parts}  \label{rowmult}

\end{figure}

The conjugate partition $\alpha^*$ is $[5,3,2,1]$.  The isomorphism $\psi_X$ maps $L_X$ onto \[GL_5 \times GL_3 \times GL_2 \times GL_1.\]  Each factor of the image corresponds to a column of $\alpha$ (cf. Figure~\ref{col}).  

\begin{figure}[h]
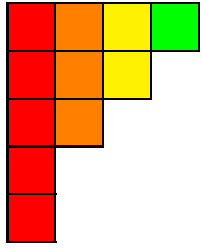

	\begin{ytableau}
	*(red) & *(orange) & *(yellow) & *(green) \\
	*(red) & *(orange) & *(yellow) \\
	*(red) & *(orange)\\
	*(red)\\
	*(red)
\end{ytableau}
	\caption{The Young diagram of $\alpha$, colored by columns} \label{col}
\end{figure} 

The group $L_X^{\text{ref}}$ lies inside $L_X$ and contains $G_X^{\text{red}}$.  The isomorphism $\chi_X$ maps $L_X^{\text{ref}}$ isomorphically onto \[(GL_1)^4 \times (GL_1)^3 \times (GL_1)^2 \times (GL_2)^1.\]  Each factor of the image corresponds to an ordered pair consisting of a distinct part of $\alpha$ \textit{and} a column of $\alpha$ (cf. Figure~\ref{both}).  

\begin{figure}[h]
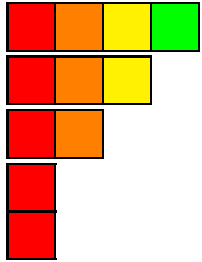

	\ytableausetup{centertableaux}
	\begin{ytableau}
		*(red) & *(orange) & *(yellow) & *(green) \\ 
	\end{ytableau}
\\ \vspace{0.01in} \hspace{-0.305in}
	\begin{ytableau}
		*(red) & *(orange) & *(yellow) \\
	\end{ytableau}
\\ \vspace{0.01in} \hspace{-0.555in}
	\begin{ytableau}
		*(red) & *(orange)\\
	\end{ytableau}
\\ \vspace{0.01in} \hspace{-0.805in}
	\begin{ytableau}
		*(red)\\
		*(red)
	\end{ytableau}
	\caption{The Young diagram of $\alpha$, partitioned by distinct parts and colored by columns} \label{both}
\end{figure} 

The composition \[\psi_X \varphi_X \colon GL_{1,1,1,2} \rightarrow GL_{5,3,2,1}\] factors as the product of diagonal embeddings \[\chi_X \varphi_X \colon GL_{1,1,1,2} \rightarrow (GL_1)^4 \times (GL_1)^3 \times (GL_1)^2 \times (GL_2)^1,\] followed by the product of the inclusions \[
GL_{1,1,1,2} \rightarrow GL_5, \quad
GL_{1,1,1} \rightarrow GL_3, \quad
GL_{1,1} \rightarrow GL_2, \quad \text{and} \quad
GL_1 \rightarrow GL_1.\]
\end{exam}

\subsection{Sheaves on the nilpotent cone of $\mathfrak{gl}_n$}
Let $e_1, \ldots, e_n$ be the standard basis for $\mathbb{C}^n$.  From the nilpotent orbit $\mathcal{O}_X$, we choose the representative element $X_{\alpha} \in \mathfrak{g}$ given by \[e_i \mapsto 0\] for all $1 \leq i \leq \alpha^*_1$ and \[e_{\alpha^*_1 + \cdots + \alpha^*_{j-1} + i} \mapsto e_{\alpha^*_1 + \cdots + \alpha^*_{j-2} + i}\] for all $2 \leq j \leq s$, $1 \leq i \leq \alpha^*_j$.

\begin{exam}
Maintain the notation of Example~\ref{colors}.  Then \[X_{\alpha} = \begin{bmatrix}
	0 & 0 & 0 & 0 & 0 & 1 & 0 & 0 &  &  &  \\
	0 & 0 & 0 & 0 & 0 & 0 & 1 & 0 &  &  &  \\
	0 & 0 & 0 & 0 & 0 & 0 & 0 & 1 &  &  &  \\
	0 & 0 & 0 & 0 & 0 & 0 & 0 & 0 &  &  &  \\
	0 & 0 & 0 & 0 & 0 & 0 & 0 & 0 &  &  &  \\
	 &  &  &  &  & 0 & 0 & 0 & 1 & 0 &  \\
	 &  &  &  &  & 0 & 0 & 0 & 0 & 1 &  \\
	 &  &  &  &  & 0 & 0 & 0 & 0 & 0 &  \\
	 &  &  &  &  &  &  &  & 0 & 0 & 1 \\
	 &  &  &  &  &  &  &  & 0 & 0 & 0 \\
	 &  &  &  &  &  &  &  &  &  & 0 
	\end{bmatrix}.\]
\end{exam}

To see that $X_{\alpha} \in \mathcal{O}_X$, let $g \in G$ be given by $X^{\alpha_i - j} v_i \mapsto e_{\alpha^*_1 + \cdots + \alpha^*_{j-1} + i}$, and observe that $X_{\alpha} = gXg^{-1}$.  Thus, $\mathcal{N} = \bigcup_{\alpha \vdash n} \mathcal{O}_{X_{\alpha}}$.  For $\alpha$ a partition of $n$, we write $\mathcal{O}_{\alpha}$ for the orbit $\mathcal{O}_{X_{\alpha}}$.  The uniqueness of the Jordan canonical form implies that the orbits $\mathcal{O}_{\alpha}$ and $\mathcal{O}_{\beta}$ are disjoint for distinct partitions $\alpha$ and $\beta$, so $\lbrace \mathcal{O}_{\alpha} \rbrace_{\alpha \vdash n}$ constitutes the set of nilpotent orbits of $\mathfrak{g}$.   

For each factor $GL_{a_t}$ of $GL_{a_1, \ldots, a_m}$, we identify the weight lattice with the character lattice $\mathbb{Z}^{a_t}$ of the maximal torus $(\mathbb{C}^*)^{a_t}$ of invertible diagonal matrices, and we assign the partial order induced by the Borel subgroup of invertible upper triangular matrices.  Then the isomorphism classes of irreducible $GL_{a_1, \ldots, a_m}$-representations are indexed by $m$-tuples of integer sequences $(\nu(1), \ldots, \nu(m))$ such that $\nu(t)$ is a dominant weight of $GL_{a_t}$ for all $1 \leq t \leq m$.  The $m$-tuple $(\nu(1), \ldots, \nu(m))$ corresponds to the representation \[V^{(\nu(1), \ldots, \nu(m))} := V^{\nu(1)} \boxtimes \cdots \boxtimes V^{\nu(m)},\] where $V^{\nu(t)}$ denotes the irreducible $GL_{a_t}$-representation with highest weight $\nu(t)$.  

We say that an integer sequence $\nu = [\nu_1, \ldots, \nu_{\ell}]$ is \textit{dominant} with respect to $\alpha$ if $\alpha_i = \alpha_{i+1}$ implies $\nu_i \geq \nu_{i+1}$.  Note that the dominance condition holds precisely when $\nu$ is the concatenation of an $m$-tuple of dominant weights $(\nu(1), \ldots, \nu(m))$.  For such $\nu$, we denote by $V^{(\alpha, \nu)}$ the representation of $G_{\alpha} := G_{X_{\alpha}}$ (or of $G_{\alpha}^{\text{red}} := G_{X_{\alpha}}^{\text{red}}$, depending on context) arising from the representation $V^{(\nu(1), \ldots, \nu(m))}$ of $GL_{a_1, \ldots, a_m}$.  Via the association \[(\alpha, \nu) \mapsto \big(\mathcal{O}_{\alpha}, V^{(\alpha, \nu)}\big),\] we construe $\Omega$ as consisting of pairs of integer sequences $(\alpha, \nu)$ such that $\alpha = [\alpha_1, \ldots, \alpha_{\ell}]$ is a partition of $n$ and $\nu = [\nu_1, \ldots, \nu_{\ell}]$ is dominant with respect to $\alpha$.  

Let $B \subset G$ be the Borel subgroup of invertible upper triangular matrices, and let $T \subset B$ be the maximal torus of invertible diagonal matrices.  The weight lattice $\Lambda = \operatorname{Hom}(T, \mathbb{C}^*) \cong \mathbb{Z}^n$ comprises length-$n$ integer sequences $\lambda = [\lambda_1, \ldots, \lambda_n]$.  Those weights $\lambda \in \Lambda$ which are weakly decreasing are dominant with respect to $B$ and belong to $\Lambda^+$.  

Set $P_{\alpha} := P_{X_{\alpha}}$ and $L_{\alpha} := L_{X_{\alpha}}$.  Then \[P_{\alpha} = \left\lbrace g \in G : ge_{\alpha^*_1 + \cdots + \alpha^*_{j-1} + i} \in \operatorname{span} \lbrace e_1, \ldots, e_{\alpha^*_1 + \cdots + \alpha^*_j} \rbrace \right\rbrace \] and \[L_{\alpha} = \left\lbrace g \in G : ge_{\alpha^*_1 + \cdots + \alpha^*_{j-1} + i} \in \operatorname{span} \lbrace e_{\alpha^*_1 + \cdots + \alpha^*_{j-1} + 1}, \ldots, e_{\alpha^*_1 + \cdots + \alpha^*_j} \rbrace \right\rbrace. \]

We see immediately that $P_{\alpha} \supset B$ and $L_{\alpha} \supset T$.  Thus, $\Lambda$ doubles as the weight lattice of $L_{\alpha}$.  Given a weight $\lambda \in \Lambda$, let $\lambda^j$ be its restriction to the factor $GL_{\alpha^*_j}$ of $L_{\alpha} \cong GL_{\alpha^*_1, \ldots, \alpha^*_s}$.  This realizes $\lambda$ as the concatenation of the $s$-tuple of integer sequences $(\lambda^1, \ldots, \lambda^s)$.  If $\lambda^j$ is weakly decreasing for all $1 \leq j \leq s$, then $\lambda$ is dominant with respect to the Borel subgroup $B_{\alpha} := B_{L_{\alpha}}$, in which case $\lambda$ belongs to $\Lambda^+_{\alpha}$, the set of dominant weights of $L_{\alpha}$ with respect to $B_{\alpha}$.  For $\lambda \in \Lambda^+_{\alpha}$, we denote by $W^{\lambda^j}$ the irreducible $GL_{\alpha^*_j}$-representation with highest weight $\lambda^j$, and we set \[W^{\lambda} := W^{\lambda^1} \boxtimes \cdots \boxtimes W^{\lambda^s},\] which indeed has highest weight $\lambda$.  

We rely on the complexes $A^{\alpha}_{\lambda} := A^{P_{\alpha}}_{\lambda}$ associated to weights $\lambda \in \Lambda^+_{\alpha}$ to interpolate between the $\Omega$- and $\Lambda^+$-bases for $K_0(\mathfrak{D})$.  Weights of $L_{\alpha}$ are also weights of $G$, so it is reasonable to expect that the expansion of $[A^{\alpha}_{\lambda}]$ on the $\Lambda^+$-basis be easy to compute.  On the other hand, representations of $L_{\alpha}$ restrict to representations of $G_{\alpha}^{\text{red}}$, and it turns out that this relationship lifts to the corresponding objects in $\mathfrak{D}$.  The following results of Achar \cite{Acharj} encapsulate these statements formally.  

\begin{lem}[Achar \cite{Acharj}, Corollary 2.5] \label{omega} 
Let $(\alpha, \nu) \in \Omega$, and let $\lambda \in \Lambda^+_{\alpha}$.  Suppose that $V^{(\alpha, \nu)}$ occurs in the decomposition of the $L_{\alpha}$-representation $W^{\lambda}$ as a direct sum of irreducible $G_{\alpha}^{\emph{red}}$-representations.  Then, when $[A^{\alpha}_{\lambda}]$ is expanded on the $\Omega$-basis for $K_0(\mathfrak{D})$, the coefficient of $[IC_{(\alpha, \nu)}]$ is nonzero.  
\end{lem}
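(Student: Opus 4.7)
The plan is to restrict both $A^\alpha_\lambda$ and the $\Omega$-basis expansion to the open orbit $\mathcal{O}_\alpha \subset \overline{\mathcal{O}_\alpha}$ and then extract the coefficient of $[IC_{(\alpha,\nu)}]$ by matching $G$-equivariant sheaves on that orbit. The point is that every other basis element $[IC_{(\beta,\mu)}]$ is supported on $\overline{\mathcal{O}_\beta}$ for some $\beta \neq \alpha$; since $\mathcal{O}_\alpha$ is open in $\overline{\mathcal{O}_\alpha}$, any contributing orbit closure either misses $\mathcal{O}_\alpha$ entirely or meets it in a proper closed subset, so $IC_{(\beta,\mu)}|_{\mathcal{O}_\alpha} = 0$ whenever $\beta \neq \alpha$. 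Only the terms with $\beta = \alpha$ survive the restriction, and these are controlled by irreducible $G_\alpha^{\text{red}}$-representations.

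First I would identify $A^\alpha_\lambda|_{\mathcal{O}_\alpha}$ explicitly. The map $\pi \colon T^*(G/P_\alpha) \to \overline{\mathcal{O}_\alpha}$ is birational onto its image in type $A$ for the parabolic $P_\alpha$ of Section 1.2: indeed, $P_\alpha$ is characterized as the stabilizer of the canonical flag $V^{(1)} \subset \cdots \subset V^{(s)} = \mathbb{C}^n$ of kernels of powers of $X_\alpha$, and this flag is recovered from any element of $\mathcal{O}_\alpha$ by the same recipe, forcing the fiber over a generic point to be a singleton. Consequently, $R\pi_* p^*\mathcal{M}_\lambda|_{\mathcal{O}_\alpha}$ reduces to $\mathcal{E}_{(\mathcal{O}_\alpha,\, W^\lambda|_{G_\alpha})}[\tfrac{1}{2}\dim\mathcal{O}_\alpha]$, where $G_\alpha$ acts on $W^\lambda$ through the composition $G_\alpha \hookrightarrow P_\alpha \twoheadrightarrow L_\alpha$. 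Since the unipotent radical of $G_\alpha$ sits inside $U_{P_\alpha}$ and thus acts trivially on $W^\lambda$, this action factors through $G_\alpha^{\text{red}} \subset L_\alpha$, and by hypothesis $V^{(\alpha,\nu)}$ appears in the decomposition of $W^\lambda|_{G_\alpha^{\text{red}}}$ with some multiplicity $m \geq 1$.

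To convert this into a coefficient statement, I would apply the restriction functor $(-)|_{\mathcal{O}_\alpha}$ to the expansion $[A^\alpha_\lambda] = \sum_{(\beta,\mu)} c_{(\beta,\mu)} [IC_{(\beta,\mu)}]$. On the left one obtains the class of $\mathcal{E}_{(\mathcal{O}_\alpha,\,W^\lambda|_{G_\alpha})}$ (up to a shift) in $K_0$ of $G$-equivariant coherent sheaves on $\mathcal{O}_\alpha$; on the right only the $\beta = \alpha$ terms contribute, each as $\pm[\mathcal{E}_{(\mathcal{O}_\alpha, V^{(\alpha,\mu)})}]$. Since $\mathcal{O}_\alpha \cong G/G_\alpha$, this Grothendieck group is $R(G_\alpha) = R(G_\alpha^{\text{red}})$, whose basis is exactly the set of irreducible $G_\alpha^{\text{red}}$-representations. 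Matching coefficients on both sides forces $c_{(\alpha,\nu)} = \pm m \neq 0$. The main obstacle is securing the birationality of $\pi$ over $\mathcal{O}_\alpha$: if the generic fiber were positive-dimensional, $R\pi_* p^*\mathcal{M}_\lambda|_{\mathcal{O}_\alpha}$ would acquire higher-degree contributions from fiber cohomology and the clean passage from representation-theoretic multiplicities to $\Omega$-basis coefficients would break down. In the present type-$A$ setting the birationality does hold, so the argument closes.
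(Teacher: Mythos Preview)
The paper does not prove this lemma; it is quoted from Achar \cite{Acharj}, Corollary 2.5. Your argument is the standard one and is essentially correct: restrict to the open orbit $\mathcal{O}_\alpha$, use that $\pi$ is an isomorphism there (because $G_\alpha \subset P_\alpha$ in type $A$, so the fibre $G_\alpha/(G_\alpha \cap P_\alpha)$ is a point), identify $A^\alpha_\lambda|_{\mathcal{O}_\alpha}$ with the equivariant bundle attached to $W^\lambda|_{G_\alpha}$, and read off multiplicities in $R(G_\alpha^{\text{red}})$.

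There is one imprecision worth tightening. You write that for $\beta \neq \alpha$ the orbit closure $\overline{\mathcal{O}_\beta}$ ``either misses $\mathcal{O}_\alpha$ entirely or meets it in a proper closed subset.'' The second alternative cannot occur: $\overline{\mathcal{O}_\beta}$ is $G$-stable and $\mathcal{O}_\alpha$ is a single $G$-orbit, so the intersection is either empty or all of $\mathcal{O}_\alpha$. The latter happens exactly when $\alpha \trianglelefteq \beta$, and in that case $IC_{(\beta,\mu)}|_{\mathcal{O}_\alpha}$ is typically \emph{nonzero}. What actually makes the argument work is a support constraint you have not stated: $A^\alpha_\lambda$ is supported on $\overline{\mathcal{O}_\alpha}$, so in its $\Omega$-basis expansion only classes $[IC_{(\beta,\mu)}]$ with $\overline{\mathcal{O}_\beta} \subset \overline{\mathcal{O}_\alpha}$ (i.e.\ $\beta \trianglelefteq \alpha$) can appear. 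For those $\beta$ with $\beta \triangleleft \alpha$ one has $\overline{\mathcal{O}_\beta} \cap \mathcal{O}_\alpha = \varnothing$, and now the restriction argument goes through cleanly. With that clause inserted, your proof is complete.
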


\begin{lem}[Achar \cite{Acharj}, Corollary 2.7] \label{lambda}
Let $W_{\alpha}$ be the Weyl group of $L_{\alpha}$, and let $\rho_{\alpha}$ be the half-sum of the positive roots of $L_{\alpha}$.  For all $\lambda \in \Lambda^+_{\alpha}$, the following equality holds in $K_0(\mathfrak{D})$: \[[A^{\alpha}_{\lambda}] = \sum_{w \in W_{\alpha}} (-1)^w [A_{\lambda + \rho_{\alpha} - w \rho_{\alpha}}].\]
\end{lem}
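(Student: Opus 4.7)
My plan is to reduce the computation of $A^\alpha_\lambda$ to a pushforward on $T^*(G/B)$ and apply the Weyl character formula for the $L_\alpha$-representation $W^\lambda$. I introduce the auxiliary variety $Z := G \times_B \mathfrak{u}_{P_\alpha}$, which sits in the commutative square
\[
\begin{array}{ccc}
Z & \xrightarrow{\;\iota\;} & T^*(G/B) \\
\tilde q\,\big\downarrow & & \big\downarrow\,\pi \\
T^*(G/P_\alpha) & \xrightarrow{\;\pi_\alpha\;} & \mathcal{N}
\end{array}
\]
in which $\iota$ is a closed embedding induced by the inclusion $\mathfrak{u}_{P_\alpha} \subset \mathfrak{u}_B$, and $\tilde q$ is a smooth flag-bundle fibration with fiber $L_\alpha/B_\alpha$ induced by $B \subset P_\alpha$. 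Because $\tilde q$ is a flag bundle, $R\tilde q_* \mathcal{O}_Z = \mathcal{O}_{T^*(G/P_\alpha)}$, and so by the projection formula
\[
A^\alpha_\lambda = R\pi_{\alpha*} p_\alpha^* \mathcal{M}_\lambda = R\pi_*\bigl(p^*\bar q^*\mathcal{M}_\lambda \otimes \iota_*\mathcal{O}_Z\bigr),
\]
where $\bar q\colon G/B \to G/P_\alpha$ and $p\colon T^*(G/B) \to G/B$ are the natural projections.

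Passing to the Grothendieck group and using the identification $K_0^G(T^*(G/B)) \cong K_0^G(G/B) \cong \mathbb{Z}[\Lambda]$ (by pullback along $p$), I would rewrite the two factors explicitly. The Koszul resolution for the subbundle $Z \subset T^*(G/B)$, whose conormal bundle is the $B$-equivariant bundle associated to $\mathfrak{u}_{B_\alpha}^* = (\mathfrak{u}_B/\mathfrak{u}_{P_\alpha})^*$, identifies $[\iota_*\mathcal{O}_Z]$ with $\prod_{\beta \in R^+(L_\alpha)}(1 - e^{-\beta})$, while $[\bar q^*\mathcal{M}_\lambda]$ corresponds to the $T$-character $\chi(W^\lambda)$. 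Multiplying the two classes and invoking the Weyl character formula for $L_\alpha$,
\[
\chi(W^\lambda) \cdot \prod_{\beta \in R^+(L_\alpha)}(1 - e^{-\beta}) = \sum_{w \in W_\alpha}(-1)^w e^{w(\lambda+\rho_\alpha) - \rho_\alpha},
\]
yields the alternating sum on the right. Since $R\pi_*$ sends the class $e^\mu$ to $[A_\mu]$, this produces $[A^\alpha_\lambda] = \sum_{w \in W_\alpha}(-1)^w[A_{w(\lambda+\rho_\alpha)-\rho_\alpha}]$.

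The principal obstacle is reconciling this natural form with the form $\sum_{w \in W_\alpha}(-1)^w[A_{\lambda+\rho_\alpha-w\rho_\alpha}]$ stated in the lemma. The weights $w(\lambda+\rho_\alpha)-\rho_\alpha$ and $\lambda+\rho_\alpha-w^{-1}\rho_\alpha$ lie in the same $W$-orbit, because $w(\lambda+\rho_\alpha)-\rho_\alpha = w \cdot (\lambda + \rho_\alpha - w^{-1}\rho_\alpha)$, so the two sums agree once one has the $W$-symmetry $[A_\mu] = [A_{w\mu}]$ in $K_0(\mathfrak{D})$ and reindexes by $w \mapsto w^{-1}$ (using $(-1)^w = (-1)^{w^{-1}}$). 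Justifying this $W$-symmetry---nontrivial, since the Springer resolution is not itself $W$-equivariant---requires either a geometric input (e.g., the $W$-action on the relevant K-group coming from Grothendieck's simultaneous resolution $\tilde{\mathfrak{g}} \to \mathfrak{g}$) or a direct K-theoretic argument. Once in place, it closes the gap between the two expressions and completes the proof.
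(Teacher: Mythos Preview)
The paper does not prove this lemma; it is quoted verbatim from Achar \cite{Acharj}, Corollary 2.7, and used as a black box. So there is no argument in the paper to compare yours against.

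On its own terms, your geometric setup is the standard one and is correct: the factorization through $Z = G\times_B \mathfrak{u}_{P_\alpha}$, the Koszul identification of $[\iota_*\mathcal{O}_Z]$ with $\prod_{\beta\in R^+(L_\alpha)}(1-e^{-\beta})$ in $K^G(T^*(G/B))\cong \mathbb{Z}[\Lambda]$, and the multiplication by $\chi(W^\lambda)$ via the Weyl character formula all go through and yield
\[
[A^\alpha_\lambda]=\sum_{w\in W_\alpha}(-1)^w\,[A_{\,w(\lambda+\rho_\alpha)-\rho_\alpha}].
\]

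You are right that this is not literally the formula in the lemma, and right to flag it rather than wave it away. The two index sets $\{w(\lambda+\rho_\alpha)-\rho_\alpha\}_{w}$ and $\{\lambda+\rho_\alpha-w\rho_\alpha\}_{w}$ are genuinely different as subsets of $\Lambda$: the first lies below $\lambda$ in the $L_\alpha$-root order, the second above it with top term $\lambda+2\rho_\alpha$ (which is exactly what the paper exploits immediately after stating the lemma). Your observation that $w(\lambda+\rho_\alpha)-\rho_\alpha = w\bigl(\lambda+\rho_\alpha - w^{-1}\rho_\alpha\bigr)$ is correct, so the reconciliation reduces precisely to the identity $[A_\mu]=[A_{w\mu}]$ in $K_0(\mathfrak{D})$ for $w\in W$. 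That identity is not automatic from the definition of $A_\mu$, but it is established in Achar's paper (it is part of the package of results in \cite{Acharj}, \S2, from which Corollary~2.7 is drawn); once you cite it, your argument is complete. Since the present paper already imports the lemma wholesale from \cite{Acharj}, importing that auxiliary identity from the same source is entirely in keeping with how the result is being used here.
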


Let $W$ be the Weyl group of $G$, and, for all $\mu \in \Lambda$, let $\operatorname{dom}(\mu) \in \Lambda^+$ be the unique dominant weight in the $W$-orbit of $\mu$.  When $[A_{\mu}]$ is expanded on the $\Lambda^+$-basis for $K_0(\mathfrak{D})$, the coefficient of $[A_{\lambda}]$ is zero unless $\lambda \leq \operatorname{dom}(\mu)$ (cf. Achar \cite{Acharj}, Proposition 2.2).  Thus, if $\mu \in \Lambda^+_{\alpha}$, it follows from Lemma~\ref{lambda} that $[A^{\alpha}_{\mu}] \in \lbrace  \operatorname{span} [A_{\lambda}] : \lambda \leq \operatorname{dom}(\mu + 2 \rho_{\alpha}) \rbrace$.  

Let $\Omega_{\alpha}$ be the set of all dominant integer sequences $\nu$ with respect to $\alpha$.  Given $\nu \in \Omega_{\alpha}$, set \[\Lambda^+_{\alpha, \nu} := \left \lbrace \mu \in \Lambda^+_{\alpha} : \dim \operatorname{Hom}_{G_{\alpha}^{\text{red}}} \big(V^{(\alpha, \nu)}, W^{\mu}\big) > 0 \right \rbrace.\]  On input $(\alpha, \nu)$, our algorithm finds a weight $\mu \in \Lambda^+_{\alpha, \nu}$ such that $||\mu + 2 \rho_{\alpha}||$ is minimal.  As demonstrated by Achar \cite{Achart, Acharj}, this guarantees that $\gamma(\alpha, \nu) = \operatorname{dom}(\mu + 2 \rho_{\alpha})$.\footnote{This follows from Claim 2.3.1 in \cite{Achart}, except that $\gamma$ is defined differently.  In \cite{Acharj}, Theorem 8.10, Achar shows that the bijection $\gamma$ constructed in \cite{Achart} coincides with the bijection in Theorem~\ref{bez}.  }

The intuition behind this approach is straightforward.  For all $\mu \in \Lambda^+_{\alpha, \nu}$, the expansion of $[A^{\alpha}_{\mu}]$ on the $\Omega$-basis takes the form \[\big[A^{\alpha}_{\mu}\big] = \dim \operatorname{Hom}_{G_{\alpha}^{\text{red}}} \big(V^{(\alpha, \nu)}, W^{\mu}\big) \big[IC_{(\alpha, \nu)}\big] + \sum_{\upsilon \in \Omega_{\alpha} : \upsilon \neq \nu} c_{\alpha, \upsilon} \big[IC_{(\alpha, \upsilon)}\big] + \sum_{(\beta, \xi) \in \Omega : \beta \vartriangleleft \alpha} c_{\beta, \xi} \big[IC_{(\beta, \xi)}\big],\] where $\trianglelefteq$ denotes the dominance order on partitions of $n$.  On the other hand, the expansion of $[A^{\alpha}_{\mu}]$ on the $\Lambda^+$-basis takes the form \[\big[A^{\alpha}_{\mu}\big] = \pm \big[A_{\operatorname{dom}(\mu + 2 \rho_{\alpha})}\big] + \sum_{\lambda < \operatorname{dom}(\mu + 2 \rho_{\alpha})} c_{\lambda} \big[A_{\lambda} \big].\]

We compare the equations.  There is a single maximal-weight term in the right-hand side of the second equation.  It follows that there is a single maximal-weight term in the expansion of the right-hand side of the first equation on the $\Lambda^+$-basis.  By Theorem~\ref{bez}, the maximal weight must be $\gamma(\alpha, \nu)$ or among the sets $\lbrace \gamma(\alpha, \upsilon) : \upsilon \neq \nu \rbrace$ and $\lbrace \gamma(\beta, \xi) : \beta \vartriangleleft \alpha \rbrace$.  In the former case, we may conclude immediately that $\gamma(\alpha, \nu) = \operatorname{dom}(\mu + 2 \rho_{\alpha})$.  It turns out that mandating the minimality of $||\mu + 2 \rho_{\alpha}||$ suffices to preclude the latter possibility.  

\subsection{The Lusztig--Vogan bijection for $GL_2$}
Set $n := 2$.  Then $G = GL_2$.  The weight lattice $\Lambda$ comprises ordered pairs $[\lambda_1, \lambda_2] \in \mathbb{Z}^2$, and $\Lambda^+ = \lbrace [\lambda_1, \lambda_2] \in \mathbb{Z}^2 : \lambda_1 \geq \lambda_2 \rbrace$.  

The variety $\mathcal{N} \subset \mathfrak{g}$ is the zero locus of the determinant polynomial.  Each matrix of rank $1$ in $\mathfrak{g}$ is similar to $\begin{bmatrix} 0 & 1 \\ 0 & 0 \end{bmatrix}$, so $\mathcal{N}$ is the union of $\begin{bmatrix} 0 & 0 \\ 0 & 0\end{bmatrix}$ (the \textit{zero orbit}) and the $G$-orbit of $\begin{bmatrix} 0 & 1 \\ 0 & 0 \end{bmatrix}$ (the \textit{regular orbit}).  

To the zero orbit corresponds the partition $[1,1]$.  Note that $G_{[1,1]}^{\text{red}} = L_{[1,1]} = G$.  Hence \[\Omega_{[1,1]} = \lbrace [\nu_1, \nu_2] \in \mathbb{Z}^2 : \nu_1 \geq \nu_2 \rbrace \quad \text{and} \quad \Lambda^+_{[1,1]} = \lbrace [\mu_1, \mu_2] \in \mathbb{Z}^2 : \mu_1 \geq \mu_2 \rbrace.\]

For all $[\mu_1, \mu_2] \in \Lambda^+_{[1,1]}$, the irreducible $L_{[1,1]}$-representation $W^{[\mu_1, \mu_2]}$ is isomorphic as a $G_{[1,1]}^{\text{red}}$-representation to $V^{([1,1], [\mu_1, \mu_2])}$.  Thus, for all $[\nu_1, \nu_2] \in \Omega_{[1,1]}$, \[\Lambda^+_{[1,1], [\nu_1, \nu_2]} = \lbrace [\nu_1, \nu_2] \rbrace.\]  Our algorithm sets $[\mu_1, \mu_2] := [\nu_1, \nu_2]$.  

On the $\Omega$-basis, $[A^{[1,1]}_{[\mu_1, \mu_2]}]$ expands as \[ \left[A^{[1,1]}_{[\nu_1, \nu_2]}\right] = \left[IC_{([1,1], [\nu_1, \nu_2])}\right].\]

Since $W_{[1,1]} = W = \mathfrak{S}_2$ and $\rho_{[1,1]} = [\frac{1}{2}, - \frac{1}{2}]$, it follows that $[A^{[1,1]}_{[\mu_1, \mu_2]}]$ expands on the $\Lambda^+$-basis as \[\left[A^{[1,1]}_{[\nu_1, \nu_2]}\right] = - \left[A_{[\nu_1 + 1, \nu_2 - 1]}\right] + \left[A_{[\nu_1, \nu_2]}\right].\]

Hence $\gamma([1,1], [\nu_1, \nu_2]) = [\nu_1 + 1, \nu_2 - 1] = \operatorname{dom}([\mu_1, \mu_2] + 2 \rho_{[1,1]})$, which confirms that the output is correct.  

We turn our attention to the regular orbit, to which corresponds the partition $[2]$.  Recall that $G_{[2]}^{\text{red}} \cong GL_1$ and $L_{[2]} \cong GL_1 \times GL_1$.  Hence \[\Omega_{[2]} = \lbrace [\nu_1] \in \mathbb{Z}^1 \rbrace \quad \text{and} \quad \Lambda^+_{[2]} = \lbrace [\mu_1, \mu_2] \in \mathbb{Z}^2 \rbrace.\]  

Furthermore, the composition $\psi_{X_{[2]}} \varphi_{X_{[2]}}$ of the isomorphisms $\varphi_{X_{[2]}} \colon GL_1 \rightarrow G_{[2]}^{\text{red}}$ and $\psi_{X_{[2]}} \colon L_{[2]} \rightarrow GL_1 \times GL_1$ coincides with the diagonal embedding $GL_1 \rightarrow GL_1 \times GL_1$.  For all $[\mu_1, \mu_2] \in \Lambda^+_{[2]}$, the irreducible $L_{[2]}$-representation $W^{[\mu_1, \mu_2]}$ is isomorphic as a $G_{[2]}^{\text{red}}$-representation to $V^{([2], [\mu_1 + \mu_2])}$.  

Thus, for all $[\nu_1] \in \Omega_{[2]}$, \[\Lambda^+_{[2], [\nu_1]} = \lbrace [\mu_1, \mu_2] \in \Lambda^+_{[2]} : \mu_1 + \mu_2 = \nu_1 \rbrace.\]  Our algorithm sets \[\left[\mu_1, \mu_2 \right] := \left[ \left \lceil \frac{\nu_1}{2} \right \rceil, \left \lfloor \frac{\nu_1}{2} \right \rfloor \right].\] 

On the $\Omega$-basis, $[A^{[2]}_{[\mu_1, \mu_2]}]$ expands as \[ \left[A^{[2]}_{\left[ \left \lceil \frac{\nu_1}{2} \right \rceil, \left \lfloor \frac{\nu_1}{2} \right \rfloor \right]} \right] = \left[IC_{([2], [\nu_1])}\right] + \sum_{[\xi_1, \xi_2] \in \Omega_{[1,1]}} c_{[1,1], [\xi_1, \xi_2]}\left[IC_{([1,1], [\xi_1, \xi_2])}\right].\]

Since $W_{[2]}$ is trivial and $\rho_{[2]} = [0, 0]$, it follows that $[A^{[2]}_{[\mu_1, \mu_2]}]$ expands on the $\Lambda^+$-basis as \[\left[A^{[2]}_{\left[ \left \lceil \frac{\nu_1}{2} \right \rceil, \left \lfloor \frac{\nu_1}{2} \right \rfloor \right]} \right] = \left[A_{\left[ \left \lceil \frac{\nu_1}{2} \right \rceil, \left \lfloor \frac{\nu_1}{2} \right \rfloor \right]} \right].\]  

From our analysis above, we know that $\gamma([1,1], [\xi_1, \xi_2]) = [\xi_1 + 1, \xi_2 - 1]$, so there cannot exist $[\xi_1, \xi_2] \in \Omega_{[1,1]}$ such that $\gamma([1,1], [\xi_1, \xi_2]) = [\lceil \frac{\nu_1}{2} \rceil, \lfloor \frac{\nu_1}{2} \rfloor]$.  

Hence $\gamma([2], [\nu_1]) = [\lceil \frac{\nu_1}{2} \rceil, \lfloor \frac{\nu_1}{2} \rfloor] = \operatorname{dom}([\mu_1, \mu_2] + 2 \rho_{[2]})$.\footnote{It follows immediately from Theorem~\ref{bez} that $c_{[1,1],[\xi_1, \xi_2]} = 0$ for all $[\xi_1, \xi_2] \in \Omega_{[1,1]}$.  }  

\subsection{Outline}
The cynosure of this article is the \textit{integer-sequences version} of our algorithm, which admits as input a pair $(\alpha, \nu) \in \Omega$ and yields as output a weight $\mathfrak{A}(\alpha, \nu) \in \Lambda^+_{\alpha}$.  The output, which consists of a weight of each factor of $L_{\alpha}$, is obtained recursively: The weight of the first factor $GL_{\alpha^*_1}$ is computed; then the input is adjusted accordingly, and the algorithm is called on the residual input to determine the weight of each of the remaining factors.  

The algorithm design is guided by the objective of locating $\mathfrak{A}(\alpha, \nu)$ in $\Lambda^+_{\alpha, \nu}$ and keeping $||\mathfrak{A}(\alpha, \nu) + 2 \rho_{\alpha}||$ as small as possible.  Our main theorem is the following.  

\begin{thm} \label{main}
Let $(\alpha, \nu) \in \Omega$.  Then $\gamma(\alpha, \nu) = \operatorname{dom}(\mathfrak{A}(\alpha, \nu) + 2 \rho_{\alpha})$.  
\end{thm}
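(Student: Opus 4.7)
The plan is to reduce Theorem~\ref{main} to the criterion Achar established and recorded in the excerpt: for $(\alpha, \nu) \in \Omega$, if $\mu \in \Lambda^+_{\alpha, \nu}$ minimizes $\|\mu + 2 \rho_{\alpha}\|$, then $\gamma(\alpha, \nu) = \operatorname{dom}(\mu + 2 \rho_{\alpha})$. Consequently, it suffices to verify two properties of the output $\mathfrak{A}(\alpha, \nu)$: \emph{membership}, $\mathfrak{A}(\alpha, \nu) \in \Lambda^+_{\alpha, \nu}$; and \emph{minimality}, $\|\mathfrak{A}(\alpha, \nu) + 2 \rho_{\alpha}\| \leq \|\mu + 2 \rho_{\alpha}\|$ for every $\mu \in \Lambda^+_{\alpha, \nu}$.

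I would first reformulate $\mathfrak{A}$ in the auxiliary forms announced in the Index of Notation: the iterative version $\mathfrak{A}_{\operatorname{iter}}$, which strips off one column at a time instead of recursing, and the weight-diagrams version $\mathcal{A}$, which presents the same computation as successive moves on a diagram $X \in D_{\alpha}$. The equivalences $\mathfrak{A} = \mathfrak{A}_{\operatorname{iter}} = \mathcal{A}$ would be proved by induction on the number of columns $s = \alpha_1$. Membership would then follow from the branching rule for the chain $G_{\alpha}^{\text{red}} \subset L_{\alpha}^{\text{ref}} \subset L_{\alpha}$ spelled out in \S1.2: the column-ceilings function $\mathcal{U}_{-1}(\alpha, \nu, \sigma)$ is engineered so that its output, taken as the $GL_{\alpha^*_1}$-highest weight on the first factor of $L_{\alpha}$, majorizes the multiset of contributions from $\nu$ required to accommodate $V^{(\alpha, \nu)}$ as a summand of $W^{\mathfrak{A}(\alpha, \nu)}$ under the block inclusions $\prod_{t : k_t \geq j} GL_{a_t} \hookrightarrow GL_{\alpha^*_j}$. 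Iterating over columns and appealing to the diagonal embedding structure of $\chi_{\alpha} \varphi_{\alpha}$ would complete this half of the argument.

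The chief difficulty is minimality. Although $\|\mu + 2 \rho_{\alpha}\|^2$ splits as a sum over the $s$ columns, the condition $\mu \in \Lambda^+_{\alpha, \nu}$ couples them, so a greedy column-by-column strategy is not manifestly globally optimal. My plan is an exchange argument: assuming some $\mu \in \Lambda^+_{\alpha, \nu}$ has strictly smaller norm than $\mathfrak{A}(\alpha, \nu)$, I would isolate a first-column coordinate where $\mu$ falls short of the ceiling $\mathcal{U}_{-1}$, then transfer mass from later columns along a path ordered by the ranking permutation $\sigma$ from $\mathcal{R}_{-1}$, producing $\mu' \in \Lambda^+_{\alpha, \nu}$ with $\|\mu' + 2\rho_\alpha\| \leq \|\mu + 2 \rho_\alpha\|$ and whose first column matches $\mathfrak{A}(\alpha, \nu)$. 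Induction on $s$ would then finish the argument, since the residual input after stripping the first column is exactly what the recursion hands to $\mathfrak{A}$ on the truncated problem. The weight-diagram language via $\mathcal{A}$ is designed precisely to make this exchange visualizable row-by-row; the row-survival function $\mathcal{S}$, the row-partition function $\mathcal{P}$, and the column-reduction function $\mathcal{T}_j$ should certify that the exchange preserves the branching constraints at every intermediate column, and the candidate-floor apparatus $\mathcal{C}_1$, $\mathcal{R}_1$, $\mathcal{U}_1$ (dual to the ceilings) should provide the matching lower bounds that force equality.
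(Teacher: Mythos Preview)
Your overall reduction---membership plus minimality, then invoke Achar's criterion---is exactly right, and your membership sketch is close to the paper's Corollary~3.12 (via Theorem~3.9 and Corollary~3.11). Two points, however, deserve correction.

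First, the asserted equivalence $\mathfrak{A} = \mathcal{A}$ is not quite true at the level of outputs: the paper states explicitly (footnote~7 and the discussion around Equation~(1.1)) that $\mathfrak{A}(\alpha,\nu)$ need not coincide with $hp_1\mathcal{A}(\alpha,\nu)$; only the norms $\|\mathfrak{A}(\alpha,\nu)+2\rho_\alpha\|$ and $\|hp_1\mathcal{A}(\alpha,\nu)+2\rho_\alpha\|$ agree. So the passage from $\mathcal{A}$ to $\mathfrak{A}$ is not a mere reformulation but requires the separate inductive argument of Theorem~4.11.

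Second, and more seriously, your minimality argument is where the real gap lies. You propose an exchange: given a competitor $\mu\in\Lambda^+_{\alpha,\nu}$, transfer mass between columns to force the first column to match $\mathfrak{A}(\alpha,\nu)$ without increasing $\|\mu+2\rho_\alpha\|$ and without leaving $\Lambda^+_{\alpha,\nu}$. But membership in $\Lambda^+_{\alpha,\nu}$ is the representation-theoretic condition $\dim\operatorname{Hom}_{G_\alpha^{\mathrm{red}}}(V^{(\alpha,\nu)},W^\mu)>0$, and you give no mechanism for why your mass transfers preserve it. The combinatorial gadgets you cite ($\mathcal{S}$, $\mathcal{P}$, $\mathcal{T}_j$, the floor functions) are tools the paper uses for an entirely different purpose. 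The paper never attempts a direct exchange argument; instead it proves (all of Section~5, culminating in Theorem~5.9 and Corollary~5.10) that $p_1\mathcal{A}(\alpha,\nu)$ is a \emph{distinguished} diagram, then invokes Achar's uniqueness theorem (Theorem~8.8 of \cite{Acharj}) to conclude $p_1\mathcal{A}(\alpha,\nu)=\mathsf{A}(\alpha,\nu)$, and finally borrows minimality wholesale from Achar's Corollary~8.9. The heavy lifting is the verification of the four distinguishedness conditions, which occupies the long Theorems~5.5 and~5.9 and Lemmas~5.6--5.8. Your plan bypasses this entirely, but replaces it with an exchange step whose feasibility you have not established.
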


We prove the main theorem by verifying that \[||\mathfrak{A}(\alpha, \nu) + 2 \rho_{\alpha}|| = \min \lbrace ||\mu + 2 \rho_{\alpha}|| : \mu \in \Lambda^+_{\alpha, \nu} \rbrace.\]  However, our approach is indirect and relies on a combinatorial apparatus introduced by Achar \cite{Achart, Acharj} --- \textit{weight diagrams}.  

A weight diagram $X$ of shape-class $\alpha$ encodes several integer sequences, including a weight $h(X) \in \Lambda^+_{\alpha}$.  On input $(\alpha, \nu)$, Achar's algorithm outputs a weight diagram $\mathsf{A}(\alpha, \nu)$ of shape-class $\alpha$ such that $h \mathsf{A}(\alpha, \nu) \in \Lambda^+_{\alpha, \nu}$ and $||h \mathsf{A}(\alpha, \nu) + 2 \rho_{\alpha}||$ is minimal (cf. \cite{Acharj}, Corollary 8.9).  Achar's conclusion (cf. \cite{Acharj}, Theorem 8.10) is that Theorem~\ref{main} holds with $h \mathsf{A}(\alpha, \nu)$ in place of $\mathfrak{A}(\alpha, \nu)$.  

The minimality of $||h \mathsf{A}(\alpha, \nu) + 2 \rho_{\alpha}||$ is basic to Achar's algorithm, which maintains a candidate output $X$ at each step, and performs only manipulations that do not increase $||hX + 2 \rho_{\alpha}||$.  In contrast, $\mathfrak{A}(\alpha, \nu)$ is computed one entry at a time.  The minimality of $||\mathfrak{A}(\alpha, \nu) + 2 \rho_{\alpha}||$ is an emergent property, which we prove by comparison of our algorithm with Achar's.  

Rather than attempt to connect $\mathfrak{A}$ to $\mathsf{A}$, we introduce a third algorithm $\mathcal{A}$, built with the same tools as $\mathfrak{A}$, but configured to output weight diagrams rather than integer sequences.\footnote{$\mathcal{A}$ actually outputs pairs of weight diagrams, so what we refer to in the introduction as $\mathcal{A}(\alpha, \nu)$ is denoted in the body by $p_1 \mathcal{A}(\alpha, \nu)$.  }  The relationship between this \textit{weight-diagrams version} and Achar's algorithm is impossible to miss: $\mathcal{A}(\alpha, \nu)$ always exactly matches $\mathsf{A}(\alpha, \nu)$.  Hence $||h\mathcal{A}(\alpha, \nu) + 2 \rho_{\alpha}||$ is minimal.  

While it is not the case that $\mathfrak{A}(\alpha, \nu)$ always coincides with $h\mathcal{A}(\alpha, \nu)$,\footnote{In the author's thesis \cite{Rush}, the integer-sequences version $\mathfrak{A}$ is defined so that $\mathfrak{A}(\alpha, \nu) = h \mathcal{A}(\alpha, \nu)$, but the proof that this equation holds is laborious and not altogether enlightening (cf. Chapter 5).  Relaxing this requirement allows us to simplify the definition of $\mathfrak{A}$ and focus on proofs more pertinent to $\gamma$.  } we show nonetheless that 
\begin{equation} \label{minimality}
||\mathfrak{A}(\alpha, \nu) + 2 \rho_{\alpha}|| = ||h \mathcal{A}(\alpha, \nu) + 2 \rho_{\alpha}||,
\end{equation}
which implies that \[\operatorname{dom}(\mathfrak{A}(\alpha, \nu) + 2 \rho_{\alpha}) = \operatorname{dom}(h \mathcal{A}(\alpha, \nu) + 2 \rho_{\alpha}),\] confirming that $\mathfrak{A}$ is a bona fide version of $\mathcal{A}$.  The main theorem follows immediately.   

In summary, the algorithm $\mathfrak{A}$ is a bee-line for computing $\gamma$, akin to an ansatz, which works because $\mathfrak{A}(\alpha, \nu) \in \Lambda^+_{\alpha, \nu}$ such that $||\mathfrak{A}(\alpha, \nu) + 2 \rho_{\alpha}||$ is minimal.  The minimality of $||\mathfrak{A}(\alpha, \nu) + 2 \rho_{\alpha}||$ is a consequence of the minimality of $||h \mathcal{A}(\alpha, \nu) + 2 \rho_{\alpha}||$, and we deduce the latter by identifying $\mathcal{A}(\alpha, \nu)$ with $\mathsf{A}(\alpha, \nu)$.  

The rest of this article is organized as follows.  In section 2, we present the integer-sequences version of our algorithm, along with several example calculations.  

In section 3, we define weight diagrams.  A weight diagram of shape-class $\alpha$ encodes an element each of $\Omega_{\alpha}$, $\Lambda^+_{\alpha}$, and $\Lambda^+$, and we give a correct proof of Proposition 4.4 in Achar \cite{Acharj} regarding the relations between the corresponding objects in $\mathfrak{D}$.  

In section 4, we present the weight-diagrams version of our algorithm and delineate its basic properties.  Then we prove Equation~\ref{minimality} holds, assuming that $||h \mathcal{A}(\alpha, \nu) + 2 \rho_{\alpha}||$ is minimal.

In section 5, we state Achar's criteria for a weight diagram to be \textit{distinguished}, and we prove that $\mathcal{A}$ outputs a distinguished diagram on any input.  As we explain, this implies that the diagrams $\mathcal{A}(\alpha, \nu)$ and $\mathsf{A}(\alpha, \nu)$ are identical for all $(\alpha, \nu) \in \Omega$.  

Finally, in the appendix, we cite Achar's algorithm for $\gamma^{-1}$ as heuristic evidence that our algorithm for $\gamma$ is the conceptually correct counterpart.  Achar's algorithm for $\gamma$ does not parallel his algorithm for $\gamma^{-1}$, but ours does.  
\vfill \eject

\section{The Algorithm, Integer-Sequences Version}

\subsection{Overview}
Fix a partition $\alpha = [\alpha_1, \ldots, \alpha_{\ell}]$ with conjugate partition $\alpha^* = [\alpha^*_1, \ldots, \alpha^*_s]$.  Given an integer sequence $\iota$ of any length, let $\operatorname{dom}(\iota)$ be the sequence obtained by rearranging the entries of $\iota$ in weakly decreasing order.  (This is consistent with the notation of section 1.3, for $\operatorname{dom}(\iota) \in W \iota \cap \Lambda^+$ if $\iota \in \Lambda$.)  

Let $\nu \in \Omega_{\alpha}$.  On input $(\alpha, \nu)$, our algorithm outputs an integer sequence $\mu$ of length $n$ satisfying the following conditions:
\begin{enumerate}
	\item $\mu$ is the concatenation of an $s$-tuple of weakly decreasing integer sequences $(\mu^1, \ldots, \mu^s)$ such that $\mu^j$ is of length $\alpha^*_j$ for all $1 \leq j \leq s$;
	\item There exists a collection of integers $\lbrace \nu_{i,j} \rbrace_{\substack{1 \leq i \leq \ell \\ 1 \leq j \leq \alpha_i}}$ such that \[\nu_i = \nu_{i, 1} + \cdots + \nu_{i, \alpha_i}\] for all $1 \leq i \leq \ell$ and $\mu^j = \operatorname{dom}([\nu_{1, j}, \ldots, \nu_{\alpha^*_j, j}])$ for all $1 \leq j \leq s$.  
\end{enumerate}
Recall that the first condition indicates $\mu \in \Lambda^+_{\alpha}$.  The second condition implies $\mu \in \Lambda^+_{\alpha, \nu}$ (cf. Corollary~\ref{decamp}).  

Although we could construct a collection $\lbrace \nu_{i,j} \rbrace_{\substack{1 \leq i \leq \ell \\ 1 \leq j \leq \alpha_i}}$ such that $\nu_i = \nu_{i,1} + \cdots + \nu_{i, \alpha_i}$ for all $i$ and obtain $\mu$ as a by-product (by setting $\mu^j := \operatorname{dom}([\nu_{1,j}, \ldots, \nu_{\alpha^*_j, j}])$ for all $j$), our algorithm instead computes each $\mu^j$ directly, alongside a permutation $\sigma^j \in \mathfrak{S}_{\alpha^*_j}$, so that $\nu_i = \mu^1_{\sigma^1(i)} + \cdots + \mu^{\alpha_i}_{\sigma^{\alpha_i}(i)}$ for all $i$.  (Then a collection fit to $\mu$ is given by $\nu_{i,j} := \mu^j_{\sigma^j(i)}$.)  

\begin{rem} \label{motiv}
Were we seeking to minimize $||\mu||$, it would suffice to choose, for all $i$, integers $\nu_{i, 1}, \ldots, \nu_{i, \alpha_i} \in \lbrace \lceil \frac{\nu_i}{\alpha_i} \rceil, \lfloor \frac{\nu_i}{\alpha_i} \rfloor \rbrace$ summing to $\nu_i$, and let the collection $\lbrace \nu_{i,j} \rbrace_{\substack{1 \leq i \leq \ell \\ 1 \leq j \leq \alpha_i}}$ induce the output $\mu$.  

However, our task is to minimize $||\mu + 2 \rho_{\alpha}||$, in which case we cannot confine each $\nu_{i,j}$ to the set $\lbrace \lceil \frac{\nu_i}{\alpha_i} \rceil, \lfloor \frac{\nu_i}{\alpha_i} \rfloor \rbrace$.\footnote{See section 2.4 for an example in which there exists $i,j$ such that $\nu_{i,j}$ must not belong to $\lbrace \lceil \frac{\nu_i}{\alpha_i} \rceil, \lfloor \frac{\nu_i}{\alpha_i} \rfloor \rbrace$.  }  Specifying the collection $\lbrace \nu_{i,j} \rbrace_{\substack{1 \leq i \leq \ell \\ 1 \leq j \leq \alpha_i}}$ straightaway, and learning the (numerical) order of the entries in each sequence $[\nu_{1, j}, \ldots, \nu_{\alpha^*_j, j}]$ post hoc, risks needlessly inflating \[\sum_{j=1}^s \left|\left|\operatorname{dom}([\nu_{1,j}, \ldots, \nu_{\alpha^*_j, j}]) + 2 \left[\frac{\alpha^*_j - 1}{2}, \ldots, \frac{1 - \alpha^*_j}{2} \right]\right|\right|^2 = ||\mu + 2 \rho_{\alpha}||^2.\]  

But how can we know what the order among the integers $\nu_{1,j}, \ldots, \nu_{\alpha^*_j, j}$ will be before their values are assigned?  Our answer is simply to stipulate the order, and pick values pursuant thereto --- by deciding $\sigma^j$, then $\mu^j$, and setting $[\nu_{1, j}, \ldots, \nu_{\alpha^*_j, j}] := [\mu^j_{\sigma^j(1)}, \ldots, \mu^j_{\sigma^j(\alpha^*_j)}]$.  
\end{rem} 

The algorithm runs by recursion.  Roughly: $\sigma^1$ is determined via a \textit{ranking} function, which compares \textit{candidate ceilings}, each measuring how the addition of $2 \rho_{\alpha}$ to $\mu$ might affect a subset of the collection $\lbrace \nu_{i,j} \rbrace_{\substack{1 \leq i \leq \ell \\ 1 \leq j \leq \alpha_i}}$, subject to a hypothesis about $\sigma^1$.  After $\sigma^1$ is settled, the corresponding candidate ceilings are tweaked (under the aegis of a \textit{column} function) to compute $\mu^1$.  Then $\mu^1$ is ``subtracted off,'' and the algorithm is called on the residual input $\nu'$, defined by $\nu'_i :=  \nu_i - \mu^1_{\sigma^1(i)}$, returning $\mu^2, \ldots, \mu^s$. 

\subsection{The algorithm}
Describing the algorithm explicitly requires us to introduce formally several preliminary functions.  
\begin{df}
Given a pair of integer sequences $(\alpha, \nu) \in \mathbb{N}^{\ell} \times \mathbb{Z}^{\ell}$, an integer $i \in \lbrace 1, \ldots, \ell \rbrace$, and an ordered pair of disjoint sets $(I_a, I_b)$ satisfying $I_a \cup I_b = \lbrace 1, \ldots, \ell \rbrace \setminus \lbrace i \rbrace$, we define the \textit{candidate-ceiling} function $\mathcal{C}_{-1}$ as follows:
	
\[\mathcal{C}_{-1}(\alpha, \nu, i , I_a, I_b) := \left \lceil \frac{\nu_i - \sum_{j \in I_a} \min \lbrace \alpha_i, \alpha_j \rbrace + \sum_{j \in I_b} \min \lbrace \alpha_i, \alpha_j \rbrace}{\alpha_i} \right \rceil.\]
\end{df}

\begin{df}
The \textit{ranking-by-ceilings} algorithm $\mathcal{R}_{-1}$ computes a function $\mathbb{N}^{\ell} \times \mathbb{Z}^{\ell} \rightarrow \mathfrak{S}_{\ell}$ iteratively over $\ell$ steps.  
	
Say $\mathcal{R}_{-1}(\alpha, \nu) = \sigma$.  On the $i^{\text{th}}$ step of the algorithm, $\sigma^{-1}(1), \ldots, \sigma^{-1}(i-1)$ have already been determined.  Set \[J_i := \lbrace \sigma^{-1}(1), \ldots, \sigma^{-1}(i-1) \rbrace \quad \text{and} \quad J'_i := \lbrace 1, \ldots, \ell \rbrace \setminus J_i.\]  Then $\sigma^{-1}(i)$ is designated the numerically minimal $j \in J'_i$ among those for which \[(\mathcal{C}_{-1}(\alpha, \nu, j, J_i, J'_i \setminus \lbrace j \rbrace), \alpha_j, \nu_j)\] is lexicographically maximal.    
\end{df}

\begin{df}
The \textit{column-ceilings} algorithm  $\mathcal{U}_{-1}$ is iterative with $\ell$ steps and computes a function $\mathbb{N}^{\ell} \times \mathbb{Z}^{\ell} \times \mathfrak{S}_{\ell} \rightarrow \mathbb{Z}^{\ell}_{\text{dom}}$, where $\mathbb{Z}^{\ell}_{\text{dom}} \subset \mathbb{Z}^{\ell}$ denotes the subset of weakly decreasing sequences.  
	
Say $\mathcal{U}_{-1}(\alpha, \nu, \sigma) = [\iota_1, \ldots, \iota_{\ell}]$.  On the $i^{\text{th}}$ step of the algorithm, $\iota_1, \ldots, \iota_{i-1}$ have already been determined.  Then \[\iota_i := \mathcal{C}_{-1}(\alpha, \nu, \sigma^{-1}(i), \sigma^{-1} \lbrace 1, \ldots, i-1 \rbrace, \sigma^{-1} \lbrace i+1, \ldots, \ell \rbrace) - \ell + 2i - 1\] unless the right-hand side is greater than $\iota_{i-1}$, in which case $\iota_i := \iota_{i-1}$.  
\end{df}
 
We assemble these constituent functions into a recursive algorithm $\mathfrak{A}$ that computes a map $\mathbb{Y}_{n, \ell} \times \mathbb{Z}^{\ell} \rightarrow \mathbb{Z}^{n}$, where $\mathbb{Y}_{n, \ell}$ denotes the set of partitions of $n$ with $\ell$ parts.  

On input $(\alpha, \nu)$, the algorithm sets \[\sigma^1 := \mathcal{R}(\alpha, \nu) \quad \text{and} \quad \mu^1 := \mathcal{U}(\alpha, \nu, \sigma^1).\]

If $\alpha_1 = 1$, it returns $\mu^1$.  

Otherwise, it defines $(\alpha', \nu') \in \mathbb{Y}_{n-\ell, \alpha^*_2} \times \mathbb{Z}^{\alpha^*_2}$ by setting \[\alpha'_i := \alpha_i - 1 \quad \text{and} \quad \nu'_i := \nu_i - \mu^1_{\sigma^1(i)}\] for all $1 \leq i \leq \alpha^*_2$.  

Then it prepends $\mu^1$ to $\mathfrak{A}(\alpha', \nu')$ and returns the result.  

\begin{rem} \label{iter}

The use of recursion makes our instructions for computing $\mathfrak{A}(\alpha, \nu)$ succinct.  At the cost of a bit of clarity, we can rephrase the instructions to use iteration, and thereby delineate every step in the computation.  

Consider the algorithm $\mathfrak{A}_{\operatorname{iter}} \colon \mathbb{Y}_{n, \ell} \times \mathbb{Z}^{\ell} \rightarrow \mathbb{Z}^n$ defined as follows.  

On input $(\alpha, \nu)$, it starts by setting $\alpha^1 := \alpha$, $\nu^1 := \nu$, $\sigma^1 := \mathcal{R}_{-1}(\alpha^1, \nu^1)$, and $\mu^1 := \mathcal{U}_{-1}(\alpha^1, \nu^1, \sigma^1)$.  

Then, for $2 \leq j \leq s$:
\begin{itemize}
\item It defines $\alpha^j$ by $\alpha^j_i :=\alpha^{j-1}_i - 1$ for all $1 \leq i \leq \alpha^*_j$;  
\item It defines $\nu^{j}$ by $\nu^{j}_i := \nu^{j-1}_i - \mu^{j-1}_{\sigma^{j-1}(i)}$ for all $1 \leq i \leq \alpha^*_{j}$;
\item It sets $\sigma^j := \mathcal{R}_{-1}(\alpha^j, \nu^j)$;
\item It sets $\mu^j := \mathcal{U}_{-1}(\alpha^j, \nu^j, \sigma^j)$.
\end{itemize}

Finally, it returns the concatenation of $(\mu^1, \ldots, \mu^s)$.  

It should be clear that $\mathfrak{A}_{\operatorname{iter}}(\alpha, \nu)$ agrees with $\mathfrak{A}(\alpha, \nu)$.  To see this, we induct on $s$.  For the inductive step, it suffices to show that $\mathfrak{A}(\alpha', \nu')$ is the concatenation of $(\mu^2, \ldots, \mu^s)$.  But $\mathfrak{A}(\alpha', \nu') =  \mathfrak{A}_{\operatorname{iter}}(\alpha^2, \nu^2)$ by the inductive hypothesis.  

\end{rem}

\subsection{Examples}

We study three examples.  First, to illustrate the workings of the ranking function, we consider the orbit $\mathcal{O}_{[2,1]}$.  Given $\nu \in \Omega_{[2,1]}$, the algorithm makes exactly one meaningful comparison --- to determine whether $\sigma^1$ is the trivial or nontrivial permutation in $\mathfrak{S}_2$.  

Second, to underscore the advantages of our approach, we consider an input pair $(\alpha, \nu)$ for which there exists only one collection $\lbrace \nu_{i,j} \rbrace_{\substack{1 \leq i \leq \ell \\ 1 \leq j \leq \alpha_i}}$ such that $\nu_{i,j} \in \lbrace \lceil \frac{\nu_i}{\alpha_i} \rceil, \lfloor \frac{\nu_i}{\alpha_i} \rfloor \rbrace$ for all $i,j$, and setting $\mu^j := \operatorname{dom}([\nu_{1,j}, \ldots, \nu_{\alpha^*_j,j}])$ for all $j$ yields an incorrect answer for $\gamma(\alpha, \nu)$.  The input pair is $([3,2,2,1], [15,8,8,4])$.  

Last, we revisit the orbit $\mathcal{O}_{[4,3,2,1,1]}$ featured in Example~\ref{colors} and compute $\mathfrak{A}$ on the input pair $([4,3,2,1,1], [15,14,9,4,4])$, taken from Achar's thesis \cite{Achart}.  We also discuss the computation of $\mathfrak{A}_{\operatorname{iter}}$.  

\begin{exam}
Set $\alpha := [2,1]$.  Then $\alpha^* = [2,1]$.  Reading $G_{\alpha}^{\text{red}}$ and $L_{\alpha}$ off the Young diagram of $\alpha$ (cf. Figure~\ref{rank}), we see that $G_{[2,1]}^{\text{red}} \cong GL_1 \times GL_1$ and $L_{[2,1]} \cong GL_2 \times GL_1$.  
\begin{figure}[h]
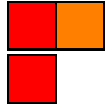

	\ytableausetup{centertableaux}
	\begin{ytableau}
		*(red) & *(orange) \\ 
	\end{ytableau}
	\\ \vspace{0.01in} \hspace{-0.305in}
	\begin{ytableau}
		*(red) \\
	\end{ytableau}
	\caption{The Young diagram of $[2,1]$} \label{rank}
\end{figure}

Note that \[\Omega_{[2,1]} = \lbrace [\nu_1, \nu_2] \in \mathbb{Z}^2 \rbrace \quad \text{and} \quad \Lambda^+_{[2,1]} = \lbrace [\lambda_1, \lambda_2, \lambda_3] \in \mathbb{Z}^3 : \lambda_1 \geq \lambda_2 \rbrace.\]

Let $\nu = [\nu_1, \nu_2] \in \Omega_{[2,1]}$.   On input $(\alpha, \nu)$, the algorithm computes $\sigma^1  := \mathcal{R}_{-1}(\alpha, \nu)$.  Since $\alpha_1 > \alpha_2$, the triple \[(\mathcal{C}_{-1}(\alpha, \nu, 1, \varnothing, \lbrace 2 \rbrace), \alpha_1, \nu_1)\] is lexicographically greater than the triple \[(\mathcal{C}_{-1}(\alpha, \nu, 2, \varnothing, \lbrace 1 \rbrace), \alpha_2, \nu_2)\] if and only if 
\begin{align} \label{pare}
\mathcal{C}_{-1}(\alpha, \nu, 1, \varnothing, \lbrace 2 \rbrace) \geq \mathcal{C}_{-1}(\alpha, \nu, 2, \varnothing, \lbrace 1 \rbrace).
\end{align}

Therefore, $(\sigma^1)^{-1}(1) = 1$ if and only if Inequality~\ref{pare} holds.  By construction of the ranking-by-ceilings algorithm, $(\sigma^1)^{-1}(2) \in \lbrace 1, 2 \rbrace \setminus \lbrace (\sigma^1)^{-1}(1) \rbrace$, so $\sigma^1$ is the identity in $\mathfrak{S}_2$ if Inequality~\ref{pare} holds, and transposes $1$ and $2$ otherwise.  

Evaluating the candidate ceilings, we find:
\begin{align*}
& \mathcal{C}_{-1}(\alpha, \nu, 1, \varnothing, \lbrace 2 \rbrace) = \mathcal{C}_{-1}([2,1], [\nu_1, \nu_2], 1, \varnothing, \lbrace 2 \rbrace) = \left \lceil \frac{\nu_1 + 1}{2} \right \rceil; \\ & \mathcal{C}_{-1}(\alpha, \nu, 2, \varnothing, \lbrace 1 \rbrace) = \mathcal{C}_{-1}([2,1], [\nu_1, \nu_2], 2, \varnothing, \lbrace 1 \rbrace) = \nu_2 + 1.
\end{align*}

Observe that \[\left \lceil \frac{\nu_1 + 1}{2} \right \rceil \geq \nu_2 + 1 \Longleftrightarrow \nu_1 \geq 2 \nu_2.\]

Hence \[\sigma^1 = \begin{cases} 
12 & \nu_1 \geq 2 \nu_2 \\ 21 & \nu_1 \leq 2 \nu_2 - 1 
\end{cases}.\]

We treat each case separately.  

\begin{enumerate}
\item Suppose $\nu_1 \geq 2 \nu_2$.  

The algorithm computes $\mu^1 := \mathcal{U}_{-1}(\alpha, \nu, \sigma^1)$.  By definition, \[\mu^1_1 = \mathcal{C}_{-1}(\alpha, \nu, 1, \varnothing, \lbrace 2 \rbrace) - 1 = \left \lceil \frac{\nu_1 - 1}{2} \right \rceil.\]

Since \[\mathcal{C}_{-1}(\alpha, \nu, 2, \lbrace 1 \rbrace,  \varnothing) + 1 = \nu_2,\] and $\lceil \frac{\nu_1 - 1}{2} \rceil \geq \nu_2$, it follows that \[\mu^1_1 \geq \mathcal{C}_{-1}(\alpha, \nu, 2, \lbrace 1 \rbrace,  \varnothing) + 1.\]

Hence \[\mu^1_2 = \mathcal{C}_{-1}(\alpha, \nu, 2, \lbrace 1 \rbrace,  \varnothing) + 1 = \nu_2.\]

Then the algorithm sets $\alpha' := [1]$, and it defines $\nu'$ by \[\nu'_1 := \nu_1 - \mu^1_1 = \nu_1 - \left \lceil \frac{\nu_1 - 1}{2} \right \rceil = \left \lfloor \frac{\nu_1+1}{2} \right \rfloor.\]

Clearly, \[\mathfrak{A}(\alpha', \nu') = \mathcal{C}_{-1}(\alpha', \nu', 1, \varnothing, \varnothing) = \nu'_1 = \left \lfloor \frac{\nu_1+1}{2} \right \rfloor.\]

Hence \[\mathfrak{A}([2,1], [\nu_1, \nu_2]) = \left[\left \lceil \frac{\nu_1 - 1}{2} \right \rceil, \nu_2, \left \lfloor \frac{\nu_1 + 1}{2} \right \rfloor \right].\]

\item Suppose $\nu_1 \leq 2 \nu_2 - 1$.  

The algorithm computes $\mu^1 := \mathcal{U}_{-1}(\alpha, \nu, \sigma^1)$.  By definition, \[\mu^1_1 = \mathcal{C}_{-1}(\alpha, \nu, 2, \varnothing, \lbrace 1 \rbrace) - 1 = \nu_2.\]

Since \[\mathcal{C}_{-1}(\alpha, \nu, 1, \lbrace 2 \rbrace,  \varnothing) + 1 = \left \lceil \frac{\nu_1 + 1}{2} \right \rceil\] and $\nu_2 \geq \lceil \frac{\nu_1 + 1}{2} \rceil$, it follows that \[\mu^1_1 \geq \mathcal{C}_{-1}(\alpha, \nu, 1, \lbrace 2 \rbrace,  \varnothing) + 1.\]

Hence \[\mu^1_2 = \mathcal{C}_{-1}(\alpha, \nu, 1, \lbrace 2 \rbrace,  \varnothing) + 1 = \left \lceil \frac{\nu_1+1}{2} \right \rceil.\]

Then the algorithm sets $\alpha' := [1]$, and it defines $\nu'$ by \[\nu'_1 := \nu_1 - \mu^1_2 = \nu_1 - \left \lceil \frac{\nu_1 + 1}{2} \right \rceil = \left \lfloor \frac{\nu_1 - 1}{2} \right \rfloor.\]   

Clearly, \[\mathfrak{A}(\alpha', \nu') = \mathcal{C}_{-1}(\alpha', \nu', 1, \varnothing, \varnothing) = \nu'_1 = \left \lfloor \frac{\nu_1 - 1}{2} \right \rfloor.\]

Hence \[\mathfrak{A}([2,1], [\nu_1, \nu_2]) = \left[\nu_2, \left \lceil \frac{\nu_1 + 1}{2} \right \rceil, \left \lfloor \frac{\nu_1 - 1}{2} \right \rfloor \right].\]
\end{enumerate}

We conclude that \[\mathfrak{A}([2,1], [\nu_1, \nu_2]) = \begin{cases} 
\left[\left \lceil \frac{\nu_1 - 1}{2} \right \rceil, \nu_2, \left \lfloor \frac{\nu_1 + 1}{2} \right \rfloor \right] & \nu_1 \geq 2 \nu_2 \\ 
\left[\nu_2, \left \lceil \frac{\nu_1 + 1}{2} \right \rceil, \left \lfloor \frac{\nu_1 - 1}{2} \right \rfloor \right] & \nu_1 \leq 2 \nu_2 - 1 \end{cases}.\]

Since $\rho_{[2,1]} = [\frac{1}{2}, -\frac{1}{2}, 0]$, assuming Theorem~\ref{main} holds, we find \[\gamma([2,1], [\nu_1, \nu_2]) = \begin{cases}
\left[\left \lceil \frac{\nu_1 + 1}{2} \right \rceil, \left \lfloor \frac{\nu_1 + 1}{2} \right \rfloor, \nu_2 - 1 \right] & \nu_1 \geq 2 \nu_2 \\
\left[\nu_2 + 1, \left \lceil \frac{\nu_1 - 1}{2} \right \rceil, \left \lfloor \frac{\nu_1 - 1}{2} \right \rfloor \right] & \nu_1 \leq 2 \nu_2 - 1 \end{cases}.\]
\end{exam}

\begin{exam}
Set $\alpha := [3,2,2,1]$.  Then $\alpha^* = [4,3,1]$.  Reading $G_{\alpha}^{\text{red}}$ and $L_{\alpha}$ off the diagram of $\alpha$ (cf. Figure~\ref{task}), we see that $G_{\alpha}^{\text{red}} \cong GL_1 \times GL_2 \times GL_1$ and $L_{\alpha} \cong GL_4 \times GL_3 \times GL_1$.  

\begin{figure}[h]
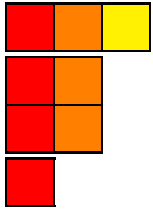

	\ytableausetup{centertableaux}
	\begin{ytableau}
		*(red) & *(orange) & *(yellow) \\ 
	\end{ytableau}
	\\ \vspace{0.01in} \hspace{-0.305in}
	\begin{ytableau}
		*(red) & *(orange) \\
		*(red) & *(orange)
	\end{ytableau}
	\\ \vspace{0.01in} \hspace{-0.555in}
	\begin{ytableau}
		*(red) \\
	\end{ytableau}
	\caption{The Young diagram of $[4,3,1]$} \label{task}
\end{figure} 

Note that \[\Omega_{\alpha} = \lbrace \nu \in \mathbb{Z}^4 : \nu_2 \geq \nu_3 \rbrace\] and \[\Lambda^+_{\alpha} = \lbrace \lambda \in \mathbb{Z}^{8} : \lambda_1 \geq \lambda_2 \geq \lambda_3 \geq \lambda_4; \lambda_5 \geq \lambda_6 \geq \lambda_7 \rbrace.\]

Set $\nu := [15, 8, 8, 4] \in \Omega_{\alpha}$.  On input $(\alpha, \nu)$, the algorithm computes \[\sigma^1 := \mathcal{R}_{-1}(\alpha, \nu) = 1234.\]

Next it computes \[\mu^1 := \mathcal{U}_{-1}(\alpha, \nu, \sigma^1) = [4, 4, 4, 4].\]

Then it sets \[\alpha' := [2, 1, 1] \quad \text{and} \quad \nu' := [11, 4, 4].\]

To finish off, it computes \[\mathfrak{A}(\alpha', \nu') = [5, 4, 4, 6].\]

Thus, \[\mathfrak{A}(\alpha, \nu) = [4, 4, 4, 4, 5, 4, 4, 6].\]

Since \[\rho_{\alpha} = \left[\frac{3}{2}, \frac{1}{2}, - \frac{1}{2}, - \frac{3}{2}, 1, 0, -1, 0 \right],\] assuming Theorem~\ref{main} holds, we find \[\gamma(\alpha, \nu) = [7,7,6,5,4,3,2,1].\]

Note that \[\frac{\nu_1}{\alpha_1} = 5 \quad \text{and} \quad \frac{\nu_2}{\alpha_2} = \frac{\nu_3}{\alpha_3} = \frac{\nu_4}{\alpha_4} = 4.\]

Therefore, if $\lbrace \nu_{i,j} \rbrace_{\substack{1 \leq i \leq 4 \\ 1 \leq j \leq \alpha_i}} \subset \mathbb{Z}$ is a collection such that $\nu_{i,j} \in \lbrace \lceil \frac{\nu_i}{\alpha_i} \rceil, \lfloor \frac{\nu_i}{\alpha_i} \rfloor \rbrace$ for all $i, j$, then \[\nu_{1,1} = \nu_{1,2} = \nu_{1,3} = 5 \quad \text{and} \quad \nu_{2,1} = \nu_{2,2} = \nu_{3,1} = \nu_{3,2} = \nu_{4,1} = 4.\]  

Setting \[\mu^1 := [5, 4, 4, 4], \quad \mu^2 := [5, 4, 4], \quad \mu^3 := [5],\] we arrive at the induced weight $\mu = [5, 4, 4, 4, 5, 4, 4, 5]$, which has smaller norm than the output $\mathfrak{A}(\alpha, \nu) = [4, 4, 4, 4, 5, 4, 4, 6]$.  

However, \[[5, 4, 4, 4, 5, 4, 4, 5] + 2 \rho_{\alpha} = [8, 5, 3, 1, 7, 4, 2, 5],\] which has larger norm than \[[4, 4, 4, 4, 5, 4, 4, 6] + 2 \rho_{\alpha} = [7, 5, 3, 1, 7, 4, 2, 6].\]

Thus, attempting to minimize $||\mathfrak{A}(\alpha, \nu)||$ leads to an incorrect answer for $\gamma(\alpha, \nu)$.  It is essential to minimize $||\mathfrak{A}(\alpha, \nu) + 2 \rho_{\alpha}||$, which is accomplished by our algorithm (cf. Remark~\ref{motiv}).  
\end{exam}

\begin{exam} \label{acharexam}
Set $\alpha := [4,3,2,1,1]$.  Then $\alpha^* = [5,3,2,1]$.  Recall from Example~\ref{colors} that \[G_{\alpha}^{\text{red}} \cong GL_1 \times GL_1 \times GL_1 \times GL_2 \quad \text{and} \quad L_{\alpha} \cong GL_5 \times GL_3 \times GL_2 \times GL_1.\]

Note that \[\Omega_{\alpha} = \lbrace \nu \in \mathbb{Z}^5 : \nu_4 \geq \nu_5 \rbrace\] and \[\Lambda^+_{\alpha} = \lbrace \lambda \in \mathbb{Z}^{11} : \lambda_1 \geq \lambda_2 \geq \lambda_3 \geq \lambda_4 \geq \lambda_5; \lambda_6 \geq \lambda_7 \geq \lambda_8; \lambda_9 \geq \lambda_{10} \rbrace.\]

Set $\nu := [15, 14, 9, 4, 4] \in \Omega_{\alpha}$.  On input $(\alpha, \nu)$, the algorithm computes \[\sigma^1 := \mathcal{R}_{-1}(\alpha, \nu) = 42135.\]  

Next it computes \[\mu^1 := \mathcal{U}_{-1}(\alpha, \nu, \sigma^1) = [4, 4, 4, 4, 4].\]

Then it sets \[\alpha' := [3, 2, 1] \quad \text{and} \quad \nu' := [11, 10, 5].\]  

To finish off, it computes \[\mathfrak{A}(\alpha', \nu') = [5, 5, 5, 5, 4, 2].\]

Thus, \[\mathfrak{A}(\alpha, \nu) = [4, 4, 4, 4, 4, 5, 5, 5, 5, 4, 2].\]

If we run $\mathfrak{A}_{\operatorname{iter}}$ on input $(\alpha, \nu)$, we obtain the following table.

\begin{center}
	\begin{tabular}{ |l|l|l|l|l| } 
		\hline
		$\alpha^1 = [4,3,2,1,1]$ & $\nu^1 = [15, 14, 9, 4, 4]$ & $\sigma^1 = 42135$ & $\mu^1 = [4, 4, 4, 4, 4]$ \\
		$\alpha^2 = [3,2,1]$ & $\nu^2 = [11, 10, 5]$ & $\sigma^2 = 312$ & $\mu^2 = [5, 5, 5]$ \\ 
		$\alpha^3 = [2,1]$ & $\nu^3 = [6, 5]$ & $\sigma^3 = 21$ & $\mu^3 = [5, 4]$ \\ 
		$\alpha^4 = [1]$ & $\nu^4 = [2]$ & $\sigma^4 = 1$ & $\mu^4 = [2]$ \\
		\hline
	\end{tabular}
\end{center}

Hence \[\mathfrak{A}_{\operatorname{iter}}(\alpha, \nu) = [4, 4, 4, 4, 4, 5, 5, 5, 5, 4, 2] = \mathfrak{A}(\alpha, \nu).\]

Since \[\rho_{\alpha} = \left[2, 1, 0, -1, -2, 1, 0, -1, \frac{1}{2}, -\frac{1}{2}, 0 \right],\] assuming Theorem~\ref{main} holds, we find \[\gamma(\alpha, \nu) = [8, 7, 6, 6, 5, 4, 3, 3, 2, 2, 0].\]

This agrees with Achar's answer (cf. \cite{Achart}, Appendix A).  
\end{exam}

\vfill \eject

\section{Weight Diagrams}
In this section, we define a class of combinatorial models, which Achar christened \textit{weight diagrams}.  In form akin to Young tableaux, weight diagrams in function capture at the level of integer sequences the interactions in $K_0(\mathfrak{D})$ described in Lemmas~\ref{omega} and ~\ref{lambda}.  A weight diagram of shape-class $\alpha \vdash n$ simultaneously depicts a dominant integer sequence $\kappa(X)$ with respect to $\alpha$ and a dominant weight $h(X)$ of $L_{\alpha}$.  We establish herein that $[IC_{(\alpha, \kappa(X))}]$ occurs in the decomposition of $[A^{\alpha}_{h(X)}]$ on the $\Omega$-basis.  

Let $\alpha = [\alpha_1, \ldots, \alpha_{\ell}]$ be a partition of $n$ with conjugate partition $\alpha^* = [\alpha^*_1, \ldots, \alpha^*_s]$.  Let $k_1 > \cdots > k_m$ be the distinct parts of $\alpha$, and $a_t$ be the multiplicity of $k_t$ for all $1 \leq t \leq m$.  

\begin{df} \label{blank}
A \textit{blank diagram} of \textit{shape-class} $\alpha$ is a collection of unit squares (referred to as boxes) arranged in $\ell$ left-justified rows, which differs from a Young diagram of shape $\alpha$ only by permutation of the rows.  
\end{df}

\begin{df} \label{diagram}
A \textit{weight diagram} of \textit{shape-class} $\alpha$ is a filling of a blank diagram of shape-class $\alpha$ by integer entries, with one entry in each box.
\end{df}

Let $D_{\alpha}$ be the set of all weight diagrams of shape-class $\alpha$.  For a weight diagram $X \in D_{\alpha}$, we denote by $X^j_i$ the $i^{\text{th}}$ entry from the top in the $j^{\text{th}}$ column from the left.  We next define a combinatorial map $E \colon D_{\alpha} \rightarrow D_{\alpha}$.  

\begin{df}
Let $X$ be a weight diagram of shape-class $\alpha$.  Set $EX$ to be the filling of the same blank diagram as $X$ given by $EX^j_i := X^j_i + \alpha^*_j - 2i + 1$ for all $1 \leq j \leq s$, $1 \leq i \leq \alpha^*_j$.  
\end{df}

For the sake of convenience, we consider weight diagrams in pairs for which the second diagram is obtained from the first via $E$.  The weight-diagrams version of our algorithm better stores simultaneously the combinatorial information pertinent to the corresponding elements in $\Omega_{\alpha}$ and $\Lambda^+$ when formulated to build diagram pairs, rather than individual diagrams. 

\begin{df}
Let $\overline{E} \colon D_{\alpha} \rightarrow D_{\alpha} \times D_{\alpha}$ denote the composition of the diagonal map $D_{\alpha} \rightarrow D_{\alpha} \times D_{\alpha}$ with the map $\operatorname{Id} \times E \colon D_{\alpha} \times D_{\alpha} \rightarrow D_{\alpha} \times D_{\alpha}$.  A \textit{diagram pair} of \textit{shape-class} $\alpha$ is an ordered pair of diagrams $(X, Y)$ in $\overline{E}(D_{\alpha})$.  
\end{df}

The nomenclature ``weight diagram'' is attributable to the natural maps $\kappa \colon D_{\alpha} \rightarrow \Omega_{\alpha}$, $h \colon D_{\alpha} \rightarrow \Lambda^+_{\alpha}$, and $\eta \colon D_{\alpha} \rightarrow \Lambda^+$, which we proceed to define.  

\begin{df}
Let $X$ be a weight diagram of shape-class $\alpha$.  For all $1 \leq t \leq m$, $1 \leq i \leq a_t$, $1 \leq j \leq k_t$, let $\kappa_X^j(t, i)$ be the entry of $X$ in the $j^{\text{th}}$ column and the $i^{\text{th}}$ row from the top among rows of length $k_t$.  Then set \[\kappa_X(t) := \operatorname{dom} \left(\sum_{j=1}^{k_t} [\kappa_X^j(t, 1), \ldots, \kappa_X^j(t, a_t)] \right).\]  Set $\kappa(X)$ to be the concatenation of the $m$-tuple $(\kappa_X(1), \ldots, \kappa_X(m))$.  
\end{df}

\begin{df}
Let $X$ be a weight diagram of shape-class $\alpha$.  For all $1 \leq j \leq s$, set $h_X^j := \operatorname{dom}([X^j_1, \ldots, X^j_{\alpha^*_j}])$.  Then set $h(X)$ to be the concatenation of the $s$-tuple $(h_X^1, \ldots, h_X^s)$.  
\end{df}

\begin{df}
Let $Y$ be a weight diagram of shape-class $\alpha$.  Set $\eta(Y) := \operatorname{dom}(h(Y))$.  
\end{df}

Suppose that the entries of $X$ are weakly decreasing down each column.  Then $E$ lifts the addition of $2 \rho_{\alpha}$ to the underlying $L_{\alpha}$-weight of $X$; in other words, $h(EX) = h(X) + 2 \rho_{\alpha}$.  Hence 
\begin{align} \label{compat}
\eta(EX) = \operatorname{dom}(h(X) + 2 \rho_{\alpha}).
\end{align}   

If $X$ is \textit{distinguished} (cf. Definition~\ref{dis}), then the pair $(\alpha, \kappa(X)) \in \Omega$ and the dominant weight $\eta(EX) \in \Lambda^+$ correspond under $\gamma$ (cf. Theorem~\ref{achar}), and both can be read off the diagram pair $(X, EX)$.  The task of the weight-diagrams version of our algorithm is to find, on input $(\alpha, \nu)$, a distinguished diagram $X$ such that $\kappa(X) = \nu$, and output $(X, EX)$.   

\begin{exam} \label{thes}
We present a diagram pair of shape-class $[4,3,2,1,1] \vdash 11$, taken from Achar's thesis \cite{Achart}.    

\begin{figure}[h]
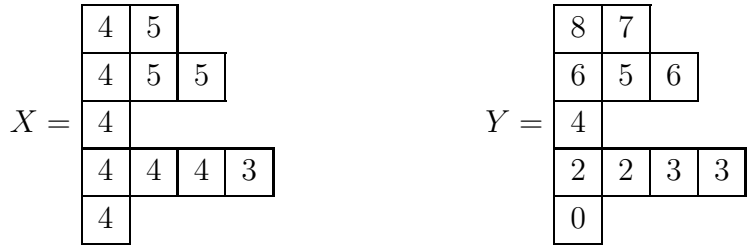

\ytableausetup{centertableaux, textmode}
\hspace{0.5in}
$X =$ \begin{ytableau}
4 & 5\\
4 & 5 & 5\\
4\\
4 & 4 & 4 & 3\\
4
\end{ytableau}
\hspace{1in}
$Y = $ \begin{ytableau}
8 & 7\\
6 & 5 & 6\\
4\\
2 & 2 & 3 & 3\\
0
\end{ytableau}
\caption{A diagram pair of shape-class $[4,3,2,1,1]$}
\end{figure}

We see that $\kappa(X) = [15, 14, 9, 4, 4]$ and $h(X) = [4, 4, 4, 4, 4, 5, 5, 4, 5, 4, 3]$.  Furthermore, $Y = EX$, and $\eta(Y) = [8, 7, 6, 6, 5, 4, 3, 3, 2, 2, 0]$.  As noted in Example~\ref{acharexam}, \[\gamma([4,3,2,1,1], \kappa(X)) = \eta(Y).\] 
\end{exam}

\begin{thm} \label{decomp}
Let $(X, Y) \in \overline{E}(D_{\alpha})$ be a diagram pair of shape-class $\alpha$.  Then $V^{(\alpha, \kappa(X))}$ occurs in the decomposition of $W^{h(X)}$ as a direct sum of irreducible $G_{\alpha}^{\text{red}}$-representations.  Furthermore, $[IC_{(\alpha, \kappa(X))}]$ occurs in the decomposition of $[A^{\alpha}_{h(X)}]$ on the $\Omega$-basis.    
\end{thm}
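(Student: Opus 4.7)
The second assertion follows immediately from the first via Lemma~\ref{omega}, so the plan reduces to proving that $V^{(\alpha, \kappa(X))}$ appears with positive multiplicity in the decomposition of the $L_{\alpha}$-representation $W^{h(X)}$ under restriction to $G_{\alpha}^{\text{red}}$.

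The strategy is to exploit the explicit column-by-column description of the embedding $G_{\alpha}^{\text{red}} \hookrightarrow L_{\alpha}$ from Section 1.2: for each column $j$, the embedding projects $\prod_t GL_{a_t}$ onto $\prod_{t : k_t \geq j} GL_{a_t}$ and then includes this product block-diagonally into the factor $GL_{\alpha^*_j}$ of $L_{\alpha}$. Consequently, the restriction of $W^{h(X)} = \boxtimes_j W^{h_X^j}$ decomposes in two stages --- first by Levi-branching in each column, then by the diagonal action of each $GL_{a_t}$ across all columns $j \leq k_t$ --- and it suffices to exhibit, in each column $j$ and for each $t$ with $k_t \geq j$, a dominant weight $\mu_t^{(j)}$ of $GL_{a_t}$ such that (i) $\boxtimes_{t : k_t \geq j} V^{\mu_t^{(j)}}$ is a summand of $W^{h_X^j}$ under the Levi restriction, and (ii) $V^{\kappa_X(t)}$ is a summand of $\bigotimes_{j \leq k_t} V^{\mu_t^{(j)}}$.

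For step (i), set $\zeta^j(t) := [\kappa_X^j(t,1), \ldots, \kappa_X^j(t, a_t)]$ whenever $k_t \geq j$. By construction, $h_X^j$ is the dominant rearrangement of the concatenation of the $\zeta^j(t)$'s over $t$, so one obtains positivity of the relevant Littlewood--Richardson branching coefficient by reordering the Levi blocks so that the concatenation matches $h_X^j$ entry-by-entry --- equivalently, by noting that Levi subgroups of $GL_{\alpha^*_j}$ conjugate in $GL_{\alpha^*_j}$ yield identical restriction multiplicities, so the ``identity'' LR expansion is always available. For step (ii), one would like to choose each $\mu_t^{(j)}$ to be a rearrangement of $\zeta^j(t)$ so that the \emph{sum} across $j$ equals $\kappa_X(t)$; the existence of such a rearrangement (the one sorting the row-sums into decreasing order within the $t$-block) is immediate from the definition of $\kappa_X(t) = \operatorname{dom}(\sum_j \zeta^j(t))$.

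The main obstacle is reconciling steps (i) and (ii): the permutations of $\zeta^j(t)$ must be chosen consistently across all columns, and each $\mu_t^{(j)}$ must nonetheless be a dominant weight for step (i) to apply. To overcome this, I would proceed not by picking a single highest-weight summand in each column independently, but by tracking simultaneously in every column a Littlewood--Richardson filling compatible with the global row structure of $X$; concretely, reading the entries of $X$ row-by-row (in the order induced by $\kappa_X$) yields a semistandard Young tableau whose content records the $\mu_t^{(j)}$'s and whose reverse lattice word condition is verified column-by-column. This tableau-based argument turns the independently-chosen LR positivity of step (i) into a single globally-coherent positivity statement that simultaneously supplies the summand $V^{\kappa_X(t)}$ of step (ii), completing the proof.
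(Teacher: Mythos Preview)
Your overall architecture matches the paper's exactly: reduce to the first assertion via Lemma~\ref{omega}, then restrict in two stages, first from $L_{\alpha}$ to $L_{\alpha}^{\text{ref}} \cong \prod_{j}\prod_{t:k_t\ge j} GL_{a_t}$, then diagonally to $G_{\alpha}^{\text{red}}$. Your step~(i) is also essentially the paper's: since $h_X^j$ is the dominant rearrangement of the concatenation of the blocks $\zeta^j(t)$, the weight $\kappa_X^{\text{ref}}$ lies in the $W_{\alpha}$-orbit of $h(X)$ and is $L_{\alpha}^{\text{ref}}$-dominant, and the paper verifies by a short root argument that it is in fact an $L_{\alpha}^{\text{ref}}$-highest weight of $W^{h(X)}$. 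So the representation $V = \boxtimes_t\boxtimes_{j\le k_t} V^{\kappa_X^j(t)}$ occurs, with each $\mu_t^{(j)} = \operatorname{dom}(\zeta^j(t)) = \kappa_X^j(t)$ forced to be the dominant rearrangement.

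The gap is in your step~(ii). Once the $\mu_t^{(j)}$ are pinned down as $\kappa_X^j(t)$, what you need is precisely that $V^{\kappa_X(t)}$ occurs in $V^{\kappa_X^1(t)}\otimes\cdots\otimes V^{\kappa_X^{k_t}(t)}$, where $\kappa_X(t) = \operatorname{dom}\bigl(\sum_j [\kappa_X^j(t,1),\ldots,\kappa_X^j(t,a_t)]\bigr)$. This is not the ``easy'' Littlewood--Richardson case (the component whose highest weight is the \emph{sum of the dominant weights}); rather, the target weight is the dominant rearrangement of a sum of \emph{extremal} weights, one per factor. That statement is the Parthasarathy--Ranga~Rao--Varadarajan conjecture, and the paper invokes it explicitly (citing Kumar for general type and Knutson--Tao's honeycomb proof for $GL_n$). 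Your proposed ``tableau-based argument'' would have to reprove PRV in type~$A$; the sketch you give --- reading $X$ row-by-row to produce a semistandard tableau satisfying the reverse lattice condition --- does not do this, because the entries $\kappa_X^j(t,i)$ within a block are in no prescribed order, so there is no reason the resulting word is reverse-lattice. You should either cite PRV directly, as the paper does, or supply a genuine combinatorial proof of it (e.g.\ via honeycombs or hives), which is substantially more work than your last paragraph suggests.
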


\begin{proof}
It suffices to prove the former statement, for the latter follows from the former in view of Lemma~\ref{omega}.  For all $1 \leq t \leq m$, $1 \leq j \leq k_t$, set \[\kappa_X^j(t) := \operatorname{dom}([\kappa_X^j(t, 1), \ldots, \kappa_X^j(t, a_t)]).\]  For all $1 \leq j \leq s$, let $\kappa_X^j$ be the concatenation of $\prod_{t : k_t \geq j} \kappa_X^j(t)$.  Finally, set $\kappa_X^{\text{ref}}$ to be concatenation of $(\kappa_X^1, \ldots, \kappa_X^s)$.  

Observe first that $\kappa_X^{\text{ref}}$ is a dominant weight of $L_{\alpha}^{\text{ref}} := L_{X_{\alpha}}^{\text{ref}}$ with respect to the Borel subgroup $B_{\alpha}^{\text{ref}} := B_{L_{\alpha}^{\text{ref}}}$.  To see this, note that $\kappa_X^j(t)$ is weakly decreasing for all $1 \leq t \leq m$, $1 \leq j \leq k_t$, and $L_{\alpha}^{\text{ref}}$ is included in $L_{\alpha}$ via the product, over all $1 \leq j \leq s$, of the inclusions $\prod_{t : k_t \geq j} GL_{a_t} \rightarrow GL_{\alpha^*_j}$ (cf. section 1.2).  

Since $\kappa_X^j$ is a permutation of $h_X^j$ for all $1 \leq j \leq s$, it follows that $\kappa_X^{\text{ref}}$ belongs to the $W_{\alpha}$-orbit of $h(X)$, so $\kappa_X^{\text{ref}}$ is a weight of the $L_{\alpha}$-representation $W^{h(X)}$.  Let $w \in W_{\alpha}$ be chosen so that $w(\kappa_X^{\text{ref}}) = h(X)$.  We claim that $\kappa_X^{\text{ref}}$ is a highest weight of the restriction of $W^{h(X)}$ to $L_{\alpha}^{\text{ref}}$.  

Let $\Phi_{\alpha} \subset \Lambda$ be the set of roots of $L_{\alpha}$, and let $\Phi_{\alpha}^{\text{ref}} \subset \Phi_{\alpha}$ be the subset of roots of $L_{\alpha}^{\text{ref}}$.  Assume for the sake of contradiction that there exists a root $\beta \in \Phi_{\alpha}^{\text{ref}}$, positive with respect to $B_{\alpha}^{\text{ref}}$, such that $\kappa_X^{\text{ref}} + \beta$ is a weight of $W^{h(X)}$.  Let $\beta^{\vee}$ denote the coroot corresponding to $\beta$.  Then $\langle \kappa_X^{\text{ref}}, \beta^{\vee} \rangle \geq 0$, which implies $\langle h(X), \beta_1^{\vee} \rangle \geq 0$, where $\beta_1 := w(\beta)$.    

However, $w(\kappa_X^{\text{ref}} + \beta) = h(X) + \beta_1$ is a weight of $W^{h(X)}$, so $\beta_1$ must be negative with respect to $B_{\alpha}$.  Since $h(X)$ is dominant with respect to $B_{\alpha}$, it follows that $\langle h(X), \beta_1^{\vee} \rangle \leq 0$.  

We conclude that $\langle h(X), \beta_1^{\vee} \rangle = 0$.  Let $s_{\beta_1} \in W_{\alpha}$ be the reflection corresponding to $\beta_1$.  Then $s_{\beta_1}(h(X)) = h(X)$.  Hence $s_{\beta_1}(h(X) + \beta_1) = h(X) - \beta_1$ is a weight of $W^{h(X)}$ that exceeds $h(X)$ in the root order.  (Contradiction.)  

Let $V$ be the $(GL_{a_1})^{k_1} \times \cdots \times (GL_{a_m})^{k_m}$-representation given by \[ V := \left(V^{\kappa_X^1(1)} \boxtimes \cdots \boxtimes V^{\kappa_X^{k_1}(1)}\right) \boxtimes \cdots \boxtimes \left(V^{\kappa_X^1(m)} \boxtimes \cdots \boxtimes V^{\kappa_X^{k_m}(m)}\right).\]

What we have just shown implies that $V$ occurs in the decomposition of $W^{h(X)}$ as a direct sum of irreducible $L_{\alpha}^{\text{ref}}$-representations.  Recall from section 1.2 that $G_{\alpha}^{\text{red}}$ is embedded in $L_{\alpha}^{\text{ref}}$ via the product, over all $1 \leq t \leq m$, of the diagonal embeddings $GL_{a_t} \rightarrow (GL_{a_t})^{k_t}$.  It follows that the restriction of $V$ to $G_{\alpha}^{\text{red}} \cong GL_{a_1, \ldots, a_m}$ is \[\left(V^{\kappa_X^1(1)} \otimes \cdots \otimes V^{\kappa_X^{k_1}(1)}\right) \boxtimes \cdots \boxtimes \left(V^{\kappa_X^1(m)} \otimes \cdots \otimes V^{\kappa_X^{k_m}(m)}\right).\]

Therefore, to see that \[\dim \operatorname{Hom}_{G_{\alpha}^{\text{red}}} \left(V^{(\alpha, \kappa(X))}, V \right) > 0,\] it suffices to show that \[\dim \operatorname{Hom}_{GL_{a_t}}\left(V^{\kappa_X(t)}, V^{\kappa_X^1(t)} \otimes \cdots \otimes V^{\kappa_X^{k_t}(t)}\right) > 0\] for all $1 \leq t \leq m$.  

This is a consequence of the Parthasarathy--Ranga Rao--Varadarajan conjecture, first proved for complex semisimple algebraic groups (via sheaf cohomology) by Kumar \cite{Kumar} in 1988.  For complex general linear groups, a combinatorial proof via honeycombs is given in Knutson--Tao \cite{Knutson}, section 4.  
\end{proof}

\begin{rem}
In Achar's work, the corresponding claim is Proposition 4.4 in \cite{Acharj}.  Unfortunately, Achar's proof is incorrect: He implicitly assumes that the combinatorial map $\kappa \colon D_{\alpha} \rightarrow \Omega_{\alpha}$ lifts the action of a representation-theoretic map $\Lambda^+_{\alpha} \rightarrow \Omega_{\alpha}$, which he also denotes by $\kappa$, so that $\kappa(X) = \kappa(h(X))$.  This is manifestly untrue, for permuting the entries within a column of $X$ affects $\kappa(X)$ but leaves $h(X)$ unchanged.  

Thus, Achar's assertion: 
\begin{quote} ``\ldots the $G^{\alpha}$-submodule generated by the $\mu$-weight space of $V^L_{\mu}$ is a representation whose highest weight is the restriction of $\mu$, \textit{which is exactly what $E$ is}'' [emphasis added]
\end{quote}
is false unless $\kappa_X^{\text{ref}}$ coincides with $h(X)$ and $\kappa_X(t) = \sum_{j=1}^{k_t} \kappa_X^j(t)$ for all $1 \leq t \leq m$ --- in which case the $L_{\alpha}^{\text{ref}}$-subrepresentation of $W^{h(X)}$ generated by the highest weight space is isomorphic to $V$, and the highest weight of its restriction to $G_{\alpha}^{\text{red}}$ is $\kappa(X)$.  
\end{rem}

\begin{exam}
Set $\alpha := [2,2]$.  Note that $G_{[2,2]}^{\text{red}} \cong GL_{2}$ and $L_{[2,2]}^{\text{ref}} = L_{[2,2]} \cong (GL_{2})^2$.  Furthermore, $G_{[2,2]}^{\text{red}}$ is embedded in $L_{[2,2]}$ via the diagonal embedding $GL_2 \rightarrow (GL_2)^2$. 

Let $X_1$ and $X_2$ be the weight diagrams $\begin{smallmatrix} 1 & 1 \\ 0 & 0 \end{smallmatrix}$ and $\begin{smallmatrix} 1 & 0 \\ 0 & 1 \end{smallmatrix}$, respectively.  Then \[\kappa(X_1) = [2,0], \quad \kappa(X_2) = [1,1], \quad \text{and} \quad h(X_1) = h(X_2) = [1,0,1,0].\]  

The restriction of the $L_{[2,2]}$-representation \[W^{[1,0,1,0]} = W^{[1,0]} \boxtimes W^{[1,0]}\] to $G_{[2,2]}^{\text{red}}$ is \[W^{[1,0]} \otimes W^{[1,0]} \cong W^{[2,0]} \oplus W^{[1,1]}.\]

Hence Theorem~\ref{decomp} holds for $X_1$ and $X_2$.  

However, Achar's proof is valid for $X_1$ only.  To see this, let $v$ and $w$ be weight vectors of $W^{[1,0]}$ of weight $[1,0]$ and $[0,1]$, respectively.  Up to scaling, \[\lbrace v \otimes v, v \otimes w, w \otimes v, w \otimes w \rbrace\] is the unique basis of weight vectors for $W^{[1,0]} \boxtimes W^{[1,0]}$.  Whereas $v \otimes v$ and $w \otimes w$ each generates a $GL_2$-subrepresentation isomorphic to $W^{[2,0]}$, both $v \otimes w$ and $w \otimes v$ are cyclic vectors.  No weight space of $W^{[1,0]} \boxtimes W^{[1,0]}$ generates a $GL_2$-subrepresentation isomorphic to $W^{[1,1]}$ (instead, $W^{[1,1]}$ is generated by $v \otimes w - w \otimes v$).  
\end{exam}

\begin{cor} \label{decamp}
Let $\nu \in \Omega_{\alpha}$, and let $\lbrace \nu_{i,j} \rbrace_{\substack{1 \leq i \leq \ell \\ 1 \leq j \leq \alpha_i}}$ be a collection of integers such that \[\nu_i = \nu_{i, 1} + \cdots + \nu_{i, \alpha_i}\] for all $1 \leq i \leq \ell$.  For all $1 \leq j \leq s$, set $\mu^j := \operatorname{dom}([\nu_{1, j}, \ldots, \nu_{\alpha^*_j, j}])$.  Set $\mu$ to be the concatenation of $(\mu^1, \ldots, \mu^s)$.  Then $\mu \in \Lambda^+_{\alpha, \nu}$.  
\end{cor}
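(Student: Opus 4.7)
The plan is to realise the data $\{\nu_{i,j}\}$ as a weight diagram and then invoke Theorem~\ref{decomp}. Specifically, I would take the blank diagram of shape-class $\alpha$ whose rows are arranged in the standard (weakly decreasing) order, so that it coincides with the Young diagram of $\alpha$, and define a filling $X \in D_\alpha$ by placing $\nu_{i,j}$ in the box at position $(i,j)$ for all $1 \leq i \leq \ell$ and $1 \leq j \leq \alpha_i$.

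Next I would verify the two identities $h(X) = \mu$ and $\kappa(X) = \nu$. For the first: the $j$-th column of $X$ contains exactly the entries $\nu_{1,j}, \nu_{2,j}, \ldots, \nu_{\alpha^*_j, j}$, since $\alpha^*_j = |\{i : \alpha_i \geq j\}|$ counts precisely the rows of $X$ that extend into column $j$. Hence
\[h_X^j = \operatorname{dom}\bigl([X^j_1, \ldots, X^j_{\alpha^*_j}]\bigr) = \operatorname{dom}\bigl([\nu_{1,j}, \ldots, \nu_{\alpha^*_j, j}]\bigr) = \mu^j,\]
and concatenating over $j$ yields $h(X) = \mu$. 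For the second: fix a distinct part $k_t$, and let $r, r+1, \ldots, r + a_t - 1$ be the indices $i$ for which $\alpha_i = k_t$. The sum of the entries across row $r + i - 1$ is $\sum_{j=1}^{k_t} \nu_{r+i-1, j} = \nu_{r+i-1}$, and since $\nu$ is dominant with respect to $\alpha$, these row sums are already weakly decreasing in $i$; hence
\[\kappa_X(t) = \operatorname{dom}\bigl([\nu_r, \ldots, \nu_{r + a_t - 1}]\bigr) = [\nu_r, \ldots, \nu_{r + a_t - 1}],\]
and concatenating over $t$ yields $\kappa(X) = \nu$.

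To finish, note that $\mu \in \Lambda^+_\alpha$ because each $\mu^j$ is weakly decreasing by construction. Applying Theorem~\ref{decomp} to the diagram pair $(X, EX) \in \overline{E}(D_\alpha)$, we obtain
\[\dim \operatorname{Hom}_{G_\alpha^{\mathrm{red}}}\bigl(V^{(\alpha, \nu)}, W^\mu\bigr) > 0,\]
which by definition means $\mu \in \Lambda^+_{\alpha, \nu}$.

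There is really no hard step here; the whole point of the corollary is to extract the representation-theoretic content of Theorem~\ref{decomp} once one recognises that the combinatorial data $\{\nu_{i,j}\}$ \emph{is} a weight diagram in disguise. The only thing requiring any care is checking that the dominance condition on $\nu$ (with respect to $\alpha$) is exactly what guarantees that no sorting is needed when reading off $\kappa(X)$ row-by-row within each group of equal-length rows.
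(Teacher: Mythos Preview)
Your proof is correct and follows exactly the same approach as the paper: construct the weight diagram $X$ by filling the Young diagram of shape $\alpha$ with the entries $\nu_{i,j}$, observe that $\kappa(X) = \nu$ and $h(X) = \mu$, and invoke Theorem~\ref{decomp}. You have simply spelled out the verifications of $\kappa(X) = \nu$ and $h(X) = \mu$ that the paper leaves implicit.
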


\begin{proof}
Let $X$ be the filling of the Young diagram of shape $\alpha$ for which $\nu_{i,j}$ is the entry in the $i^{\text{th}}$ row and $j^{\text{th}}$ column of $X$ for all $i, j$.  Then $\kappa(X) = \nu$, and $h(X) = \mu$.  Hence the result follows from Theorem~\ref{decomp}.    
\end{proof}

\begin{cor} \label{inside}
Let $\nu \in \Omega_{\alpha}$.  Then $\mathfrak{A}(\alpha, \nu) \in \Lambda^+_{\alpha, \nu}$.  
\end{cor}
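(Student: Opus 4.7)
The plan is to verify directly that the output $\mathfrak{A}(\alpha,\nu)$ satisfies the two conditions enumerated in Section~2.1, and then invoke Corollary~\ref{decamp}.

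First, I would use the iterative reformulation $\mathfrak{A}_{\operatorname{iter}}$ from Remark~\ref{iter} to make the recursion easy to unroll. Writing $\mu^1,\dots,\mu^s$ and $\sigma^1,\dots,\sigma^s$ for the sequences produced during its run, I note that each $\mu^j$ lies in $\mathbb{Z}^{\alpha^*_j}_{\operatorname{dom}}$ by construction of the column-ceilings function $\mathcal{U}_{-1}$, so $\mu^j$ is a weakly decreasing integer sequence of length $\alpha^*_j$. Concatenating these gives condition~(1), i.e.\ $\mathfrak{A}(\alpha,\nu) \in \Lambda^+_{\alpha}$.

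Second, I would construct the witnessing collection $\{\nu_{i,j}\}$ by setting
\[
\nu_{i,j} := \mu^j_{\sigma^j(i)} \qquad \text{for } 1 \leq i \leq \alpha^*_j \text{ (equivalently, } 1 \leq j \leq \alpha_i\text{).}
\]
The recursion $\nu^{j+1}_i = \nu^j_i - \mu^j_{\sigma^j(i)}$ on the index range $1 \leq i \leq \alpha^*_{j+1}$ telescopes: for any row index $i$, after $\alpha_i$ iterations the residual $\nu^{\alpha_i}_i$ is exhausted by $\mu^{\alpha_i}_{\sigma^{\alpha_i}(i)}$, since row $i$ is dropped on the very next step ($i > \alpha^*_{\alpha_i+1}$). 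Unrolling yields $\nu_i = \nu_{i,1} + \cdots + \nu_{i,\alpha_i}$ for each $1 \leq i \leq \ell$. Meanwhile, because $\mu^j$ is weakly decreasing and $\sigma^j \in \mathfrak{S}_{\alpha^*_j}$ is a permutation, the sequence $[\nu_{1,j},\dots,\nu_{\alpha^*_j,j}]$ is merely a reordering of the entries of $\mu^j$; hence $\operatorname{dom}([\nu_{1,j},\dots,\nu_{\alpha^*_j,j}]) = \mu^j$. This gives condition~(2).

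Finally, applying Corollary~\ref{decamp} to the collection $\{\nu_{i,j}\}$ with $\mu^j$ equal to the $j$-th block of $\mathfrak{A}(\alpha,\nu)$, we conclude that $\mathfrak{A}(\alpha,\nu) \in \Lambda^+_{\alpha,\nu}$. There is no substantive obstacle here; the only point requiring care is the bookkeeping for the recursive index ranges $\alpha^*_j$ versus $\alpha_i$, to make sure that the telescoping sum runs exactly from $j=1$ to $j=\alpha_i$ and captures all of $\nu_i$.
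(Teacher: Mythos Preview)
Your approach is correct and is exactly the paper's: pass to $\mathfrak{A}_{\operatorname{iter}}$, set $\nu_{i,j} := \mu^j_{\sigma^j(i)}$, and apply Corollary~\ref{decamp}. The only quibble is your clause ``since row $i$ is dropped on the very next step''---dropping is automatic from the index bounds and does not by itself force $\nu^{\alpha_i}_i = \mu^{\alpha_i}_{\sigma^{\alpha_i}(i)}$; that equality holds because the column-ceilings function assigns a length-$1$ row its exact residual value (a fact the paper verifies inside the proof of Proposition~\ref{multi}, and which its own proof of this corollary also leaves implicit).
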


\begin{proof}
By Remark~\ref{iter}, it suffices to show that $\mathfrak{A}_{\operatorname{iter}}(\alpha, \nu) \in \Lambda^+_{\alpha, \nu}$.  For all $1 \leq i \leq \ell$ and $1 \leq j \leq \alpha_i$, set $\nu_{i,j} := \mu^j_{\sigma^{j}(i)}$.  Then Corollary~\ref{decamp} implies the result.  
\end{proof}

\vfill \eject

\section{The Algorithm, Weight-Diagrams Version} 

\subsection{Overview}
In this section, we reengineer our algorithm from section 2.2 to output diagram pairs rather than weights.  Let $D_{\ell}$ be the set of weight diagrams, of any shape-class, with $\ell$ rows.  For a diagram $X \in D_{\ell}$, we denote by $X_{i,j}$ the entry of $X$ in the $i^{\text{th}}$ row and the $j^{\text{th}}$ column.  

We define a recursive algorithm $\mathcal{A}$ that computes a map \[\mathbb{N}^{\ell} \times \mathbb{Z}^{\ell} \times \lbrace \pm 1 \rbrace \rightarrow D_{\ell} \times D_{\ell}\]  by determining the entries in the first column of each diagram of its output and using recursion to ascertain the entries in the remaining columns.  Whenever we write $\mathcal{A}(\alpha, \nu)$, we refer to $\mathcal{A}(\alpha, \nu, -1)$.  

Let maps $p_1, p_2 \colon D_{\ell} \times D_{\ell} \rightarrow D_{\ell}$ be given by projection onto the first and second factors, respectively.  We refer to $p_1 \mathcal{A}(\alpha, \nu)$ as the \textit{left} diagram and to $p_2 \mathcal{A}(\alpha, \nu)$ as the \textit{right} diagram.  The algorithm $\mathcal{A}$ computes the Lusztig--Vogan bijection via $\gamma(\alpha, \nu) = \eta p_2 \mathcal{A}(\alpha, \nu)$.  

While $\mathcal{A}$ relies on the same functions as $\mathfrak{A}$ for its computations, it also requires companion versions of these functions that use floors rather than ceilings.  The \textit{candidate-floor} function $\mathcal{C}_1$, and the \textit{ranking-by-floors} and \textit{column-floors} algorithms $\mathcal{R}_1$ and $\mathcal{U}_1$, are analogous to the function $\mathcal{C}_{-1}$, and the algorithms $\mathcal{R}_{-1}$ and $\mathcal{U}_{-1}$, respectively, and we define them formally in section 4.2.  

More substantively, the recursive structure of $\mathcal{A}$ differs from that of $\mathfrak{A}$.  The integer-sequences version is singly recursive: On input $(\alpha, \nu)$, it reduces the task of determining the output to one sub-problem, namely, computing $\mathfrak{A}(\alpha', \nu')$.  In contrast, the weight-diagrams version is multiply recursive, and, depending on the input, it may require the solutions to several sub-problems to be assembled in order to return the output.  

After computing the first column of each output diagram, the weight-diagrams version creates a separate branch for each \textit{distinct} entry in the first column of the left diagram.  Then it attaches each branch's output diagrams to the first columns already computed to build the output diagrams of the whole recursion tree.  The attachment process is trivial; preparing each branch for its recursive call is not.\footnote{Thus, $\mathcal{A}$ deviates from the pattern of most prototypical divide-and-conquer algorithms, such as mergesort, for which dividing the residual input into branches is easier than combining the resulting outputs.  }  

On input $(\alpha, \nu, \epsilon)$, the algorithm $\mathcal{A}$ undertakes the following steps to compute $p_1 \mathcal{A}(\alpha, \nu, \epsilon)$ (the diagram $p_2 \mathcal{A}(\alpha, \nu, \epsilon)$ is computed simultaneously and similarly):
\begin{enumerate}
\item It computes $\sigma := \mathcal{R}_{\epsilon}(\alpha, \nu)$, which it construes as permuting the rows of a blank diagram of shape $\alpha$;\footnote{By a diagram of shape $\alpha$, we mean a diagram for which the $i^{\text{th}}$ row contains $\alpha_i$ boxes for all $1 \leq i \leq \ell$.  }  
\item It fills in the first column of the (permuted) diagram with the entries of $\iota := \mathcal{U}_{\epsilon}(\alpha, \nu, \sigma)$; 
\item For each row, it appeals to the \textit{row-survival function} to query whether the row \textit{survives} into the residual input (viz., is of length greater than $1$), and, if so, determine which branch of the residual input it is sorted into (and its position therein);  
\item For all $x$, it records the surviving rows in the $x^{\text{th}}$ branch in $\alpha^{(x)}$, and subtracts off the corresponding entries in $\iota$ from those in $\nu$ to obtain $\nu^{(x)}$;
\item For all $x$, it adjusts $\nu^{(x)}$ to $\hat{\nu}^{(x)}$ to reflect the data from the other branches;
\item For all $x$, it sets $X^{(x)} :=  p_1 \mathcal{A}(\alpha^{(x)}, \hat{\nu}^{(x)}, -\epsilon)$ and attaches $X^{(x)}$ to the first column.  
\end{enumerate} 

After the rows of a blank diagram of shape $\alpha$ have been permuted according to $\sigma \in \mathfrak{S}_{\ell}$, the $i^{\text{th}}$ row from the top is of length $\alpha_{\sigma^{-1}(i)}$.  Thus, the $i^{\text{th}}$ row \textit{survives} into the residual input if and only if $\alpha_{\sigma^{-1}(i)} > 1$.  Which branch it belongs to depends on its first-column entry.  

The first column of the permuted diagram is filled in with the entries of $\iota$.  Each distinct entry $\iota^{\circ}$ in $\iota$ gives rise to its own branch, comprising the surviving rows whose first-column entry is $\iota^{\circ}$ (a branch may be empty).  If the $i^{\text{th}}$ row does survive, it is sorted into the $x^{\text{th}}$ branch, where $x$ is the number of distinct entries in the subsequence $[\iota_1, \ldots, \iota_i]$; if, furthermore, exactly $i'$ rows among the first $i$ survive into the $x^{\text{th}}$ branch, then the $i^{\text{th}}$ row becomes the $i'^{\text{th}}$ row in the $x^{\text{th}}$ branch.    

To encompass these observations, we define the \textit{row-survival} function as follows.  
\begin{df}
For all $(\alpha, \sigma, \iota) \in \mathbb{N}^{\ell} \times \mathfrak{S}_{\ell} \times \mathbb{Z}^{\ell}_{\text{dom}}$, \[\mathcal{S}(\alpha, \sigma, \iota) \colon \lbrace 1, \ldots, \ell \rbrace \rightarrow \lbrace 1, \ldots, \ell \rbrace \times \lbrace 0, 1, \ldots, \ell \rbrace\] is given by \[\mathcal{S}(\alpha, \sigma, \iota)(i) := \big(|\lbrace \iota_{i'} : i' \leq i \rbrace|, |\lbrace i' \leq i : \iota_{i'} = \iota_i; \alpha_{\sigma^{-1}(i')} > 1 \rbrace| \cdot 1_{i} \big),\] where \[1_{i} := \begin{cases} 
1 & \alpha_{\sigma^{-1}(i)} > 1 \\ 0 & \alpha_{\sigma^{-1}(i)} = 1 
\end{cases}.\]
\end{df}

\begin{rem}
Suppose $\mathcal{S}(\alpha, \sigma, \iota)(i) = (x, i')$.  Assuming $i' > 0$, the $i^{\text{th}}$ row becomes the $i'^{\text{th}}$ row in the $x^{\text{th}}$ branch (if $i' = 0$, the row dies).  
\end{rem}

\begin{exam} \label{surv}
We revisit the input $(\alpha, \nu) := ([4,3,2,1,1], [15,14,9,4,4])$ from Example~\ref{acharexam}.  As noted therein, $\sigma := \mathcal{R}_{-1}(\alpha, \nu) = 42135$ and $\iota := \mathcal{U}_{-1}(\alpha, \nu, \sigma) = [4,4,4,4,4]$.  Thus, beginning with a blank diagram of shape $\alpha$, we see that the permuted diagram (with first column filled in) looks like 
\begin{figure}[h]
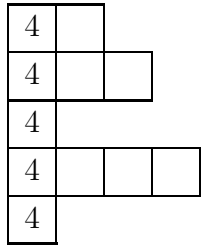

	\ytableausetup{centertableaux, textmode}
	\hspace{0.5in}
	\begin{ytableau}
		4 & \\
		4 &  & \\
		4\\
		4 &  &  & \\
		4
	\end{ytableau}
\caption{The left diagram after steps 1 and 2}
\end{figure}

From the picture, it is clear that there is exactly one branch, comprising the first, second, and fourth rows.  The row-survival function indicates the same, for \[\mathcal{S}(\alpha, \sigma, \iota)(1,2,3,4,5) = ((1,1), (1,2), (1,0), (1,3), (1,0)).\]  

We see later that $\mathcal{A}(\alpha, \nu) = (X, Y)$ in the notation of Example~\ref{thes}.  

\end{exam}  

\subsection{The algorithm}

Before we describe the algorithm, we define the preliminary functions that use floors.  

\begin{df}
Given a pair of integer sequences \[(\alpha, \nu) = ([\alpha_1, \ldots, \alpha_{\ell}], [\nu_1, \ldots, \nu_{\ell}]) \in \mathbb{N}^{\ell} \times \mathbb{Z}^{\ell},\] an integer $i \in \lbrace 1, \ldots, \ell \rbrace$, and an ordered pair of disjoint sets $(I_a, I_b)$ satisfying $I_a \cup I_b = \lbrace 1, \ldots, \ell \rbrace \setminus \lbrace i \rbrace$, we define the \textit{candidate-floor} function $\mathcal{C}$ as follows:

\[\mathcal{C}_1(\alpha, \nu, i , I_a, I_b) := \left \lfloor \frac{\nu_i - \sum_{j \in I_a} \min \lbrace \alpha_i, \alpha_j \rbrace + \sum_{j \in I_b} \min \lbrace \alpha_i, \alpha_j \rbrace}{\alpha_i} \right \rfloor.\]
\end{df}

\begin{df} 
The \textit{ranking-by-floors} algorithm $\mathcal{R}_1$ computes a function $\mathbb{N}^{\ell} \times \mathbb{Z}^{\ell} \rightarrow \mathfrak{S}_{\ell}$ iteratively over $\ell$ steps.  

Say $\mathcal{R}_1(\alpha, \nu) = \sigma$.  On the $i^{\text{th}}$ step of the algorithm, $\sigma^{-1}(\ell), \ldots, \sigma^{-1}(\ell-i+2)$ have already been determined.  Set \[J_i := \lbrace \sigma^{-1}(\ell), \ldots, \sigma^{-1}(\ell-i+2) \rbrace \quad \text{and} \quad J'_i := \lbrace 1, \ldots, \ell \rbrace \setminus J_i.\]  Then $\sigma^{-1}(\ell-i+1)$ is designated the numerically maximal $j \in J'_i$ among those for which \[(\mathcal{C}_1(\alpha, \nu, j, J'_i \setminus \lbrace j \rbrace, J_i), -\alpha_j, \nu_j)\] is lexicographically minimal.  
\end{df}

\begin{df}
The \textit{column-floors} algorithm $\mathcal{U}_1$ is iterative with $\ell$ steps and computes a function $\mathbb{N}^{\ell} \times \mathbb{Z}^{\ell} \times \mathfrak{S}_{\ell} \rightarrow \mathbb{Z}^{\ell}_{\text{dom}}$.  

Say $\mathcal{U}_1(\alpha, \nu, \sigma) = [\iota_1, \ldots, \iota_{\ell}]$.  On the $i^{\text{th}}$ step of the algorithm, $\iota_{\ell}, \ldots, \iota_{\ell - i + 2}$ have already been determined.  Then \[\iota_{\ell - i + 1} := \mathcal{C}_1(\alpha, \nu, \sigma^{-1}(\ell - i + 1), \sigma^{-1} \lbrace 1, \ldots, \ell - i \rbrace, \sigma^{-1} \lbrace \ell - i + 2, \ldots, \ell \rbrace) + \ell - 2i + 1\] unless the right-hand side is less than $\iota_{\ell - i +2}$, in which case $\iota_{\ell - i + 1} := \iota_{\ell - i + 2}$. 
\end{df}

We assemble these functions, together with the preliminary functions that use ceilings, and the row-survival function, into the recursive algorithm $\mathcal{A} \colon \mathbb{N}^{\ell} \times \mathbb{Z}^{\ell} \times \lbrace \pm 1 \rbrace \rightarrow D_{\ell} \times D_{\ell}$.  
 
On input $(\alpha, \nu, \epsilon)$, the algorithm sets \[\sigma := \mathcal{R}_{\epsilon}(\alpha, \nu) \quad \text{and} \quad \iota := \mathcal{U}_{\epsilon}(\alpha, \nu, \sigma).\]  

Next it sets \[X_{i,1} := \iota_{i} \quad \text{and} \quad Y_{i, 1} := \iota_i + \ell - 2i + 1\] for all $1 \leq i \leq \ell$.  

For all $(x, i')$ in the image of $\mathcal{S}(\alpha, \sigma, \iota)$ such that $i' > 0$, we write \[i_{(x, i')} := \mathcal{S}(\alpha, \sigma, \iota)^{-1}(x, i').\]

The algorithm sets $\mathcal{k} := |\lbrace \iota_1, \ldots, \iota_{\ell} \rbrace|$, which counts the number of branches.  

For all $1 \leq x \leq \mathcal{k}$, it sets \[\ell_x := \max \left\lbrace i' : (x, i') \in \mathcal{S}(\alpha, \sigma, \iota) \lbrace 1, \ldots, \ell \rbrace \right\rbrace.\]  

Note that $\ell_x$ counts the number of rows surviving into the $x^{\text{th}}$ branch; if $\ell_x = 0$, then the $x^{\text{th}}$ branch is empty.  

If $\ell_x > 0$, then the $x^{\text{th}}$ branch contains $\ell_x$ surviving rows, and the algorithm sets \[\alpha^{(x)} := \left [\alpha_{\sigma^{-1}\left(i_{(x, 1)}\right)} - 1, \ldots, \alpha_{\sigma^{-1}\left(i_{(x, \ell_x)}\right)} - 1 \right] \] and \[\nu^{(x)} = \left [\nu_{\sigma^{-1}\left(i_{(x, 1)}\right)} - \iota_{i_{(x, 1)}}, \ldots, \nu_{\sigma^{-1}\left(i_{(x, \ell_x)}\right)} - \iota_{i_{(x, \ell_x)}} \right].\]  

The algorithm does not call itself on $(\alpha^{(x)}, \nu^{(x)})$ because it has to adjust $\nu^{(x)}$ to reflect the data from the other branches, if any are present.  

For all $1 \leq i' \leq \ell_x$, it sets \[\hat{\nu}^{(x)}_{i'} := \nu^{(x)}_{i'} - \sum_{x' = 1}^{x-1} \sum_{i_0 = 1}^{\ell_{x'}} \min \left\lbrace \alpha^{(x)}_{i'}, \alpha^{(x')}_{i_0} \right\rbrace + \sum_{x' = x+1}^{\mathcal{k}} \sum_{i_0 = 1}^{\ell_{x'}} \min \left\lbrace \alpha^{(x)}_{i'}, \alpha^{(x')}_{i_0} \right\rbrace.\]

Then it sets $\hat{\nu}^{(x)} := \left[\hat{\nu}^{(x)}_1, \ldots, \hat{\nu}^{(x)}_{\ell_x}\right]$ and $\left(X^{(x)}, Y^{(x)} \right) := \mathcal{A}\left(\alpha^{(x)}, \hat{\nu}^{(x)}, -\epsilon \right)$.  

The algorithm fills in the rest of the entries of $X$ and $Y$ according to the following rule: For all $(i', j') \in \mathbb{N} \times \mathbb{N}$ such that $X^{(x)}$ and $Y^{(x)}$ each have an entry in the $i'^{\text{th}}$ row and $j'^{\text{th}}$ column, 
\begin{align} \label{attachx}
X_{i_{(x, i')}, j'+1} := X^{(x)}_{i', j'} + \sum_{x' = 1}^{x-1} (\alpha^{(x')})^*_{j'} - \sum_{x' = x+1}^{\mathcal{k}} (\alpha^{(x')})^*_{j'},
\end{align}
where $(\alpha^{(x')})^*_{j'} := |\lbrace i_0 : \alpha^{(x')}_{i_0} \geq j' \rbrace|$, and 
\begin{align} \label{attachy}
Y_{i_{(x,i')}, j'+1} := Y^{(x)}_{i', j'}.
\end{align}

Finally, it returns $(X, Y)$.  

Henceforward we adopt the notation of Equation~\ref{attachx} and denote $|\lbrace i : \alpha_i \geq j \rbrace|$ by $\alpha^*_j$ for all integer sequences $\alpha$, regardless of whether $\alpha$ is a partition.  

\begin{exam} \label{cont}
Maintain the notation of Example~\ref{surv}.  We proceed to compute $\mathcal{A}(\alpha, \nu)$.  

Since $\sigma := \mathcal{R}_{-1}(\alpha, \nu) = 42135$ and $\iota := \mathcal{U}_{-1}(\alpha, \nu, \sigma) = [4,4,4,4,4]$, we see that \[ [X_{1, 1}, X_{2, 1}, X_{3, 1}, X_{4, 1}, X_{5, 1}] = [4, 4, 4, 4, 4]\] and \[ [Y_{1, 1}, Y_{2, 1}, Y_{3, 1}, Y_{4, 1}, Y_{5, 1}] = [8, 6, 4, 2, 0].\]  
	
Set $f := \mathcal{S}(\alpha, \sigma, \iota)$.  Recall from Example~\ref{surv} that \[\big(f(1), f(2), f(3), f(4), f(5)\big) = \big((1,1), (1,2), (1,0), (1,3), (1,0)\big).\]  

Thus, $\mathcal{k} = 1$ and $\ell_1 = 3$.  Furthermore, $(i_{(1,1)}, i_{(1,2)}, i_{(1,3)}) = (1, 2, 4)$.  It follows that \[\alpha^{(1)} = [1, 2, 3] \quad \text{and} \quad \hat{\nu}^{(1)} = \nu^{(1)} = [5, 10, 11].\]

(Since the first branch is the only branch, no adjustment to $\nu^{(1)}$ is required and $\hat{\nu}^{(1)} = \nu^{(1)}$.)  

As it happens, we find that $X^{(1)}$ and $Y^{(1)}$ look as depicted in Figure~\ref{firstbranch}.  
\begin{figure}[h]
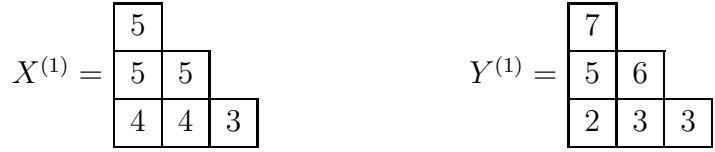

	\ytableausetup{centertableaux, textmode}
	\hspace{0.25in}
	$X^{(1)} =$ \begin{ytableau}
		5\\
		5 & 5\\
		4 & 4 & 3
	\end{ytableau}
	\hspace{1in}
	$Y^{(1)} = $ \begin{ytableau}
		7\\
		5 & 6\\
		2 & 3 & 3
	\end{ytableau}
	\caption{The diagram pair obtained from the first branch}
	\label{firstbranch}
\end{figure}

Finally, we ``attach'' $X^{(1)}$ and $Y^{(1)}$ to the first columns of $X$ and $Y$, respectively, to complete the output.  
	
\begin{figure}[h]
	\ytableausetup{centertableaux, textmode}
	\hspace{0.5in}
	$X =$ \begin{ytableau}
		4 & 5\\
		4 & 5 & 5\\
		4\\
		4 & 4 & 4 & 3\\
		4
	\end{ytableau}
	\hspace{1in}
	$Y = $ \begin{ytableau}
		8 & 7\\
		6 & 5 & 6\\
		4\\
		2 & 2 & 3 & 3\\
		0
	\end{ytableau}
	\caption{The diagram pair obtained from the recursion tree}
\end{figure}

\end{exam}

Since Example~\ref{cont} involves only one branch, it doesn't fully illustrate the contours of the algorithm.  For this reason, we also show how the algorithm computes $X^{(1)}$ and $Y^{(1)}$ in Example~\ref{cont}, during which we encounter multiple branches.  

\begin{exam}

Set $\alpha := [1,2,3]$ and $\nu := [5,10,11]$.  We compute $(\mathsf{X}, \mathsf{Y}) := \mathcal{A}(\alpha, \nu, 1)$.  

We find $\sigma := \mathcal{R}_1(\alpha, \nu) = 123$ and $\iota := \mathcal{U}_1(\alpha, \nu, \sigma) = [5,5,4]$, so \[[\mathsf{X}_{1,1}, \mathsf{X}_{2,1}, \mathsf{X}_{3,1}] = [5,5,4] \quad \text{and} \quad [\mathsf{Y}_{1,1}, \mathsf{Y}_{2,1}, \mathsf{Y}_{3,1}] = [7,5,2].\]
	
Set $\mathsf{f} := \mathcal{S}(\alpha, \sigma, \iota)$.  Note that \[\big(\mathsf{f}(1), \mathsf{f}(2), \mathsf{f}(3)\big) = \big((1,0), (1,1), (2,1)\big).\]
	
Thus, $\mathcal{k} = 2$ and $\ell_1 = \ell_2 = 1$.  Furthermore, $i_{(1,1)} = 2$ and $i_{(2,1)} = 3$.  It follows that \[(\alpha^{(1)}, \nu^{(1)}) = ([1], [5]) \quad \text{and} \quad (\alpha^{(2)}, \nu^{(2)}) = ([2], [7]).\]  

Hence \[(\alpha^{(1)}, \hat{\nu}^{(1)}) = ([1], [6]) \quad \text{and} \quad (\alpha^{(2)}, \hat{\nu}^{(2)}) = ([2], [6]).\]

We draw the diagram pairs $(\mathsf{X}^{(1)}, \mathsf{Y}^{(1)})$ and $(\mathsf{X}^{(2)}, \mathsf{Y}^{(2)})$ below.  

\begin{figure}[h]
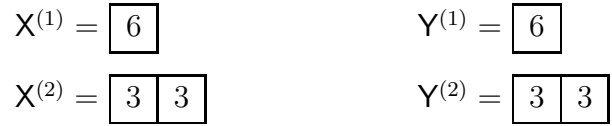

\ytableausetup{centertableaux, textmode}
\hspace{-0.25in}
$\mathsf{X}^{(1)} = $ \begin{ytableau}
	6
\end{ytableau}
\hspace{1.25in}
$\mathsf{Y}^{(1)} = $ \begin{ytableau}
	6
\end{ytableau} \\ \vspace{0.1in}
\ytableausetup{centertableaux, textmode}
\hspace{0.00in}
$\mathsf{X}^{(2)} = $ \begin{ytableau}
	3 & 3
\end{ytableau}
\hspace{1in}
$\mathsf{Y}^{(2)} = $ \begin{ytableau}
	3 & 3
\end{ytableau}
\caption{The diagram pairs obtained from the first and second branches}
\end{figure}

Finally, we ``attach'' these diagrams to the first columns computed above to complete the output.  

\begin{figure}[h]
	\ytableausetup{centertableaux, textmode}
	\hspace{0.25in}
	$\mathsf{X} = $ \begin{ytableau}
		5\\
		5 & 5\\
		4 & 4 & 3
	\end{ytableau}
	\hspace{1in}
	$\mathsf{Y} = $ \begin{ytableau}
		7\\
		5 & 6\\
		2 & 3 & 3
	\end{ytableau} 
\caption{The diagram pair obtained from the recursion tree}
\end{figure}

\textit{Nota bene.} Equation~\ref{attachx} dictates that the entries of $\mathsf{X}^{(1)}$ and $\mathsf{X}^{(2)}$ must be modified before they can be adjoined to $\mathsf{X}$, but the entries of $\mathsf{Y}^{(1)}$ and $\mathsf{Y}^{(2)}$ are adjoined to $Y$ as they are.  
\end{exam}

\subsection{Properties}

The following propositions delineate a few properties of $\mathcal{A}$.

\begin{prop} \label{permute}
Let $(\beta, \xi), (\alpha, \nu) \in \mathbb{N}^{\ell} \times \mathbb{Z}^{\ell}$, and suppose that the multisets \[\lbrace (\beta_1, \xi_1), \ldots, (\beta_{\ell}, \xi_{\ell}) \rbrace \quad \text{and} \quad \lbrace (\alpha_1, \nu_1), \ldots, (\alpha_{\ell}, \nu_{\ell}) \rbrace\] are coincident.  Then $\mathcal{A}(\beta, \xi, \pm 1) = \mathcal{A}(\alpha, \nu, \pm 1)$.  
\end{prop}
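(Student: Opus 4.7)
The plan is to establish that the output of $\mathcal{A}(\alpha, \nu, \epsilon)$ is in fact determined by $\epsilon$ and the multiset $\{(\alpha_i, \nu_i)\}_{i=1}^{\ell}$. To that end, I would introduce the \emph{canonical sort}
\[
\tau(\alpha, \nu, \epsilon) := \big[(\alpha_{\sigma^{-1}(i)}, \nu_{\sigma^{-1}(i)})\big]_{i=1}^{\ell}, \quad \text{where } \sigma := \mathcal{R}_{\epsilon}(\alpha, \nu),
\]
and argue first that $\tau(\alpha, \nu, \epsilon)$ is a function of the input multiset alone, and then verify that every downstream computation in $\mathcal{A}$ --- namely $\iota = \mathcal{U}_{\epsilon}(\alpha, \nu, \sigma)$, the row-survival function $\mathcal{S}(\alpha, \sigma, \iota)$, the counts $\ell_x$, the branch data $(\alpha^{(x)}, \hat{\nu}^{(x)})$, the first columns of $X$ and $Y$, and the attachment equations --- reads only $\tau$ together with the recursive outputs. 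An induction on $|\alpha|$, with vacuous base case $\ell = 0$ and using the observation that each $|\alpha^{(x)}| \leq |\alpha| - \ell$, then closes the argument.

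The crux is multiset-invariance of $\tau$. The candidate-ceiling $\mathcal{C}_{\epsilon}(\alpha, \nu, j, I_a, I_b)$ depends only on $(\alpha_j, \nu_j)$ and on the multisets of $\alpha$-values indexed by $I_a$ and $I_b$, since the only way these sets enter is through the sums $\sum_{k \in I_a} \min\{\alpha_j, \alpha_k\}$ and $\sum_{k \in I_b} \min\{\alpha_j, \alpha_k\}$. Inducting on the step index $i$ in $\mathcal{R}_{\epsilon}$, suppose the multisets of pairs $\{(\alpha_k, \nu_k)\}_{k \in J_i}$ and $\{(\alpha_k, \nu_k)\}_{k \in J_i'}$ are multiset-determined. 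The lexicographic maximum of $(\mathcal{C}_{\epsilon}(\alpha, \nu, j, J_i, J_i' \setminus \{j\}), \alpha_j, \nu_j)$ over $j \in J_i'$ is then determined, and even if the numerical tie-break selects different literal indices for $(\alpha, \nu)$ versus $(\beta, \xi)$, all tied indices share the same triple --- and in particular the same pair $(\alpha_j, \nu_j)$ --- so the pair appended to $\tau$ at step $i$ is unambiguous. The symmetric argument using $\mathcal{C}_1$ and the numerically-maximal tie-break handles $\epsilon = 1$.

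The remaining claims are then routine: $\iota_i$ is computed from a single $\mathcal{C}_{\epsilon}$-call whose arguments are read off $\tau$; $\mathcal{S}(\alpha, \sigma, \iota)(i)$ uses only $\iota$ and the first components of $\tau_1, \ldots, \tau_i$; the branch parameters $\ell_x$, $\alpha^{(x)}$, $\nu^{(x)}$, $\hat{\nu}^{(x)}$, and the positions $i_{(x, i')}$ are all expressible purely in $\tau$-data; and the first-column entries $X_{i, 1} = \iota_i$ and $Y_{i, 1} = \iota_i + \ell - 2i + 1$ follow suit. The inductive hypothesis applied to $(\alpha^{(x)}, \hat{\nu}^{(x)}, -\epsilon)$ --- an input with strictly smaller total sum --- then yields equality of the recursive outputs $(X^{(x)}, Y^{(x)})$ across the two initial inputs, and Equations~\ref{attachx} and~\ref{attachy} consequently produce identical $(X, Y)$. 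The main obstacle is the tie-break bookkeeping handled in the middle paragraph: everything else is mechanical verification that the only data entering each step of $\mathcal{A}$ are first components of $\tau$-entries, second components of $\tau$-entries, the length $\ell$, and $\epsilon$.
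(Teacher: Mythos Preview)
Your proposal is correct and follows essentially the same approach as the paper: both argue by induction on the step index $i$ in $\mathcal{R}_\epsilon$ that the sorted sequence $\big[(\alpha_{\sigma^{-1}(i)}, \nu_{\sigma^{-1}(i)})\big]_{i=1}^\ell$ depends only on the input multiset, the key point being that all indices tied for the lexicographic extremum share the same pair $(\alpha_j,\nu_j)$. Your outer induction on $|\alpha|$ is in fact unnecessary --- once the canonical sort $\tau$ is shown to agree, the branch inputs $(\alpha^{(x)}, \hat{\nu}^{(x)})$ are \emph{literally identical} tuples for the two initial inputs (not merely multiset-equal), so the recursive outputs coincide trivially without any inductive hypothesis; this is why the paper simply asserts ``it suffices'' to establish Equation~\ref{perm} and stops.
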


\begin{proof}
We prove $\mathcal{A}(\beta, \xi, -1) = \mathcal{A}(\alpha, \nu, -1)$.  

Set $\tau := \mathcal{R}_{-1}(\beta, \xi)$ and $\sigma := \mathcal{R}_{-1}(\alpha, \nu)$.  It suffices to show that 
\begin{align} \label{perm}
\beta_{\tau^{-1}(i)} = \alpha_{\sigma^{-1}(i)} \quad \text{and} \quad \xi_{\tau^{-1}(i)} = \nu_{\sigma^{-1}(i)}  
\end{align}
for all $1 \leq i \leq \ell$.  The proof is by induction on $i$; we show the inductive step.  In other words, we assume that Equation~\ref{perm} holds for all $1 \leq i_0 \leq i-1$ and show it holds for $i$.  

Set $J := \tau^{-1} \lbrace 1, \ldots, i-1 \rbrace$ and $J' := \lbrace 1, \ldots, \ell \rbrace \setminus J$.  Also set $I := \sigma^{-1} \lbrace 1, \ldots, i-1 \rbrace$ and $I' := \lbrace 1, \ldots, \ell \rbrace \setminus I$.  By the inductive hypothesis, the multisets \[\lbrace (\beta_j, \xi_j) \rbrace_{j \in J} \quad \text{and} \quad \lbrace (\alpha_j, \nu_j) \rbrace_{j \in I} \] are coincident.  

Therefore, the multisets \[\lbrace (\beta_j, \xi_j) \rbrace_{j \in J'} \quad \text{and} \quad \lbrace (\alpha_j, \nu_j) \rbrace_{j \in I'} \] are coincident, and there exists a bijection $\zeta \colon I' \rightarrow J'$ such that $(\beta_j, \xi_j) = (\alpha_{\zeta^{-1}(j)}, \nu_{\zeta^{-1}(j)})$ for all $j \in J'$.  

Then \[(\mathcal{C}_{-1}(\beta, \xi, j, J, J' \setminus \lbrace j \rbrace), \beta_j, \xi_j) = (\mathcal{C}_{-1}(\alpha, \nu, \zeta^{-1}(j), I, I' \setminus \lbrace \zeta^{-1}(j) \rbrace), \alpha_{\zeta^{-1}(j)}, \nu_{\zeta^{-1}(j)})\] for all $j \in J'$.  

Hence the lexicographically maximal value of the function \[j \mapsto (\mathcal{C}_{-1}(\beta, \xi, j, J, J' \setminus \lbrace j \rbrace), \beta_j, \xi_j)\] over the domain $J'$ coincides with the lexicographically maximal value of the function \[j \mapsto (\mathcal{C}_{-1}(\alpha, \nu, j, I, I' \setminus \lbrace j \rbrace), \alpha_j, \nu_j) \] over the domain $I'$.  

By definition of $\mathcal{R}_{-1}$, the former value is attained at $j = \tau^{-1}(i)$, and the latter value is attained at $j = \sigma^{-1}(i)$.  The result follows.   

\end{proof}

\begin{prop} \label{eworks}
Let $(\alpha, \nu, \epsilon) \in \mathbb{N}^{\ell} \times \mathbb{Z}^{\ell} \times \lbrace \pm 1 \rbrace$.  Then $\mathcal{A}(\alpha, \nu, \epsilon) \in \overline{E}(D_{\operatorname{dom}(\alpha)})$.    
\end{prop}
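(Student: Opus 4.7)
My plan is to prove the statement by strong induction on $n := \sum_i \alpha_i$, the total number of boxes. The recursive structure of $\mathcal{A}$ supplies the inductive hypothesis directly: for each non-empty branch $x$, I may assume that $(X^{(x)}, Y^{(x)}) := \mathcal{A}(\alpha^{(x)}, \hat{\nu}^{(x)}, -\epsilon)$ already satisfies $X^{(x)} \in D_{\operatorname{dom}(\alpha^{(x)})}$ and $Y^{(x)} = E X^{(x)}$. From these hypotheses I must derive two conclusions about the algorithm's output $(X, Y)$ on input $(\alpha, \nu, \epsilon)$: first, that the multiset of row lengths of $X$ equals $\{\alpha_1, \ldots, \alpha_{\ell}\}$ (so that $X \in D_{\operatorname{dom}(\alpha)}$, since shape-class is a multiset invariant); and second, that $Y^j_i = X^j_i + \alpha^*_j - 2i + 1$ for every box $(i, j)$ of $X$.

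The base case is $\alpha_1 = 1$: every row is length-one, no branches are launched, and the output consists entirely of the first column, filled as $X_{i,1} = \iota_i$ and $Y_{i,1} = \iota_i + \ell - 2i + 1$; since $\alpha^*_1 = \ell$, the identity $Y = EX$ is immediate. In the inductive step the first column is handled by the same computation. For the shape-class claim I will observe that the length of row $i_{(x, i')}$ of $X$ equals $1 + r^{(x)}_{i'}$, where $r^{(x)}_{i'}$ denotes the length of row $i'$ in $X^{(x)}$, while dead rows have length one. Since the multiset $\{r^{(x)}_{i'}\}_{i'}$ equals $\{\alpha^{(x)}_{i'}\}_{i'}$ by induction, aggregating across all branches and dead rows --- shifting each surviving entry by $+1$ --- recovers the multiset $\{\alpha_1, \ldots, \alpha_{\ell}\}$.

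The main verification is the identity $Y^{j+1}_{i''} = X^{j+1}_{i''} + \alpha^*_{j+1} - 2 i'' + 1$ on columns $j+1$ with $j \geq 1$. Boxes in column $j+1$ of $X$ are in bijection with boxes in column $j$ of the $X^{(x)}$ via the attachment rule. The key book-keeping identity is that the row position $i''$ of the box $(i_{(x, i')}, j+1)$ in column $j+1$ of $X$ decomposes as
\[
i'' = \sum_{x' < x}(\alpha^{(x')})^*_j + a^{(x)}_{i', j},
\]
where $a^{(x)}_{i', j}$ is the row position of $(i', j)$ in column $j$ of $X^{(x)}$. This holds because (i) since $\iota$ is weakly decreasing, all rows of branch $x' < x$ precede rows of branch $x$ in the original row indexing, and (ii) the number of rows in branch $x'$ reaching column $j+1$ of $X$ equals $(\alpha^{(x')})^*_j$. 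Combined with $\alpha^*_{j+1} = \sum_x (\alpha^{(x)})^*_j$ and the inductive hypothesis $Y^{(x)}_{i', j} - X^{(x)}_{i', j} = (\alpha^{(x)})^*_j - 2 a^{(x)}_{i', j} + 1$, substituting into the attachment formulas for $Y_{i_{(x,i')}, j+1} - X_{i_{(x,i')}, j+1}$ yields the desired identity after a short telescoping.

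The main obstacle I foresee is keeping the indexing honest: by the inductive hypothesis $X^{(x)}$ lies only in $D_{\operatorname{dom}(\alpha^{(x)})}$, so the length of its $i'$-th row need not equal $\alpha^{(x)}_{i'}$ but only some entry of $\alpha^{(x)}$. The argument must therefore distinguish between the nominal row length $\alpha^{(x)}_{i'}$ appearing in the definition of the recursion and the actual row length $r^{(x)}_{i'}$ inside $X^{(x)}$. The attachment rule is phrased in terms of the latter (``$X^{(x)}$ has an entry at $(i', j')$''), whereas the counts $(\alpha^{(x)})^*_j$ are multiset invariants and thus agree whether extracted from $\alpha^{(x)}$ or from the row lengths of $X^{(x)}$. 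Ensuring that these two perspectives stay aligned throughout the telescoping computation is where I would invest the most care.
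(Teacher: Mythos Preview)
Your proposal is correct and follows essentially the same route as the paper: induction along the recursion (the paper inducts on $\max_i \alpha_i$, you on $n=\sum_i \alpha_i$, which is immaterial), the shape-class check via the identity $\alpha^*_{j+1}=\sum_x(\alpha^{(x)})^*_j$, and the $EX=Y$ verification via the key book-keeping identity that the column-position in column $j+1$ of $X$ of the box $(i_{(x,i')},j+1)$ equals $\sum_{x'<x}(\alpha^{(x')})^*_j$ plus the column-position of $(i',j)$ inside $X^{(x)}$, followed by the same telescoping substitution of the attachment formulas. One small slip: your base case should read ``$\max_i\alpha_i=1$'' rather than ``$\alpha_1=1$'', since $\alpha$ is not assumed to be a partition at this point.
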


\begin{proof}
Set $(X, Y) := \mathcal{A}(\alpha, \nu, \epsilon)$.  We first show $(X, Y) \in D_{\operatorname{dom}(\alpha)} \times D_{\operatorname{dom}(\alpha)}$.  The proof is by induction on $\max \lbrace \alpha_1, \ldots, \alpha_{\ell} \rbrace$; we show the inductive step.  Since $X$ has an entry in the $i^{\text{th}}$ row and $j^{\text{th}}$ column if and only if $Y$ does for all $(i, j) \in \mathbb{N} \times \mathbb{N}$, it suffices to show $X \in D_{\operatorname{dom}(\alpha)}$.  By construction, the first column of $X$ has $\alpha^*_1$ entries.  Applying the inductive hypothesis, viz., $(X^{(x)}, Y^{(x)}) \in D_{\operatorname{dom}(\alpha^{(x)})} \times D_{\operatorname{dom}(\alpha^{(x)})}$, we find the $(j'+1)^{\text{th}}$ column of $X$ has $\sum_{x=1}^{\mathcal{k}} (\alpha^{(x)})^*_{j'} = \alpha^*_{j'+1}$ entries for all $1 \leq j' \leq \max \lbrace \alpha_1, \ldots, \alpha_{\ell} \rbrace -1$.  We conclude that $X$ is of shape $\alpha$.  

To see that $(X, Y) \in \overline{E}(D_{\operatorname{dom}(\alpha)})$, we show $EX = Y$.  Again the proof is by induction on $\max \lbrace \alpha_1, \ldots, \alpha_{\ell} \rbrace$, and we show the inductive step.  By construction, $X^1_i + \alpha^*_1 - 2i + 1 = Y^1_i$ for all $1 \leq i \leq \ell$.  Let $(i', j') \in \mathbb{N} \times \mathbb{N}$ such that $X^{(x)}$ and $Y^{(x)}$ each have an entry in the ${i'}^{\text{th}}$ row and ${j'}^{\text{th}}$ column, and set $\mathcal{i}$ so that $X^{(x)}_{i', j'} = (X^{(x)})^{j'}_{\mathcal{i}}$.  Note that \[X_{i_{(x, i')}, j'+1} = X^{j'+1}_{\mathcal{i} + \sum_{x'=1}^{x-1} (\alpha^{(x')})^*_{j'}}.\]

Therefore, 
\begin{align*}
EX_{i_{(x, i')}, j'+1} & = X_{i_{(x, i')}, j'+1} + \alpha^*_{j'+1} - 2\left(\mathcal{i} + \sum_{x'=1}^{x-1} (\alpha^{(x')})^*_{j'}\right) + 1 \\ & = X^{(x)}_{i', j'} + \sum_{x'=1}^{x-1} (\alpha^{(x')})^*_{j'} - \sum_{x' = x+1}^{\mathcal{k}} (\alpha^{(x')})^*_{j'} + \alpha^*_{j'+1} - 2\mathcal{i}- 2 \sum_{x'=1}^{x-1} (\alpha^{(x')})^*_{j'} + 1 \\ & = X^{(x)}_{i', j'} - \sum_{x'=1}^{x-1} (\alpha^{(x')})^*_{j'} - \sum_{x' = x+1}^{\mathcal{k}} (\alpha^{(x')})^*_{j'} +  \sum_{x'=1}^{\mathcal{k}} (\alpha^{(x')})^*_{j'} - 2\mathcal{i} + 1 \\ & = X^{(x)}_{i', j'} + (\alpha^{(x)})^*_{j'} - 2\mathcal{i} + 1 = Y^{(x)}_{i', j'} = Y_{i_{(x,i')}, j'+1},
\end{align*}
where the second-to-last equality follows from the inductive hypothesis.  
\end{proof}

For a diagram $X \in D_{\ell}$, let $\#(X,i)$ be the number of boxes in the $i^{\text{th}}$ row of $X$, and set $\Sigma(X, i) := \sum_{j=1}^{\#(X,i)} X_{i,j}$.  

\begin{prop} \label{multi}
Set $(X, Y) := \mathcal{A}(\alpha, \nu, \epsilon)$.  For all $1 \leq i \leq \ell$, set $\beta_i := \#(X,i)$ and $\xi_i := \Sigma(X,i)$.  Then the multisets \[\lbrace (\beta_1, \xi_1), \ldots, (\beta_{\ell}, \xi_{\ell}) \rbrace \quad \text{and} \quad \lbrace (\alpha_1, \nu_1), \ldots, (\alpha_{\ell}, \nu_{\ell}) \rbrace\] are coincident.  
\end{prop}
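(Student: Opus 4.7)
The plan is to induct on $m(\alpha) := \max\{\alpha_1, \ldots, \alpha_{\ell}\}$; the argument for $\epsilon = 1$ is entirely parallel to that for $\epsilon = -1$ (with ceilings replaced by floors throughout), so I focus on $\epsilon = -1$. For the base case $m(\alpha) = 1$, every row has length $1$ and $\mathcal{A}$ performs no recursion. A direct calculation shows that $\mathcal{C}_{-1}(\alpha, \nu, j, J, J') = \nu_j + |J'| - |J|$ when every $\alpha_j = 1$, so $\mathcal{R}_{-1}$ reduces to sorting $\nu$ in weakly decreasing order (with ties broken by smallest index), the ``unless'' clause in $\mathcal{U}_{-1}$ never fires, and $\iota_i = \nu_{\sigma^{-1}(i)}$ pointwise. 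Hence $\{(\beta_i, \xi_i)\} = \{(1, \nu_{\sigma^{-1}(i)})\} = \{(\alpha_j, \nu_j)\}$ as multisets.

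For the inductive step, I partition the rows of $X$ by survival. For a surviving row, the inductive hypothesis applied to $\mathcal{A}(\alpha^{(x)}, \hat{\nu}^{(x)}, -\epsilon)$ furnishes a permutation $\pi^{(x)}$ of $\{1, \ldots, \ell_x\}$ such that $\#(X^{(x)}, i') = \alpha^{(x)}_{\pi^{(x)}(i')}$ and $\Sigma(X^{(x)}, i') = \hat{\nu}^{(x)}_{\pi^{(x)}(i')}$. Summing Equation~(\ref{attachx}) over $j' = 1, \ldots, \#(X^{(x)}, i')$ and invoking the double-counting identity
\[
\sum_{j' = 1}^{M} (\alpha^{(x')})^*_{j'} = \sum_{i_0 = 1}^{\ell_{x'}} \min\{M, \alpha^{(x')}_{i_0}\}
\]
with $M = \alpha^{(x)}_{\pi^{(x)}(i')}$ shows that the attachment corrections exactly cancel the adjustment from $\nu^{(x)}$ to $\hat{\nu}^{(x)}$. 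Combined with the observation that all surviving rows in branch $x$ share the same first-column entry $\iota^{\circ}$, the pair $(\beta_{i_{(x, i')}}, \xi_{i_{(x, i')}})$ collapses to $(\alpha_{\sigma^{-1}(i_{(x, i^*)})}, \nu_{\sigma^{-1}(i_{(x, i^*)})})$ with $i^* := \pi^{(x)}(i')$, so the multiset of surviving-row pairs is exactly $\{(\alpha_j, \nu_j) : \alpha_j > 1\}$.

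For non-surviving rows, $(\beta_i, \xi_i) = (1, \iota_i)$. A direct specialization of $\mathcal{U}_{-1}$ shows that the \emph{natural} (pre-truncation) value of $\iota_i$ equals $\nu_{\sigma^{-1}(i)}$ whenever $\alpha_{\sigma^{-1}(i)} = 1$, so it suffices to prove that the ``unless'' clause in $\mathcal{U}_{-1}$ never truncates $\iota_i$ at any such step. I will derive this from an auxiliary lemma: when $\sigma = \mathcal{R}_{\epsilon}(\alpha, \nu)$, the natural $\iota$-sequence is already weakly decreasing. Granting the lemma, $\iota_i = \nu_{\sigma^{-1}(i)}$ pointwise for non-surviving rows, which matches the multiset $\{\nu_j : \alpha_j = 1\}$ and completes the induction.

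The principal obstacle is the auxiliary lemma. Lex-maximality of $\mathcal{R}_{-1}$ together with monotonicity in the index set (as $J_i$ grows, candidate-ceilings can only decrease) yields an inequality between the step-$i$ and step-$(i-1)$ candidate-ceilings at the chosen indices; this translates to the natural $\iota_i$ exceeding the natural $\iota_{i-1}$ by at most $2$, leaving a slack of $2$ to be absorbed. When $\alpha_{\sigma^{-1}(i)} \leq \alpha_{\sigma^{-1}(i-1)}$, the step-$i$ candidate-ceiling at $\sigma^{-1}(i)$ differs from its step-$(i-1)$ counterpart by exactly $-2$, closing the gap on the nose; the delicate case $\alpha_{\sigma^{-1}(i)} > \alpha_{\sigma^{-1}(i-1)}$ forces a strict inequality at the $\alpha_j$ tiebreaker in $\mathcal{R}_{-1}$, and further ceiling arithmetic is needed to absorb the remainder. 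If direct case analysis becomes unwieldy, Proposition~\ref{permute} permits pre-sorting $(\alpha, \nu)$ into a canonical form before invoking the lemma.
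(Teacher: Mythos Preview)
Your inductive framework and your treatment of the surviving rows are essentially identical to the paper's: both induct on $\max\{\alpha_i\}$, both reduce via Proposition~\ref{permute}, and both unwind the attachment formula~(\ref{attachx}) against the $\hat\nu^{(x)}$-adjustment using the identity $\sum_{j'=1}^{M}(\alpha^{(x')})^*_{j'}=\sum_{i_0}\min\{M,\alpha^{(x')}_{i_0}\}$. The divergence is entirely in the non-surviving rows, and there your argument has a genuine gap.

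Your auxiliary lemma --- that when $\sigma=\mathcal{R}_{\epsilon}(\alpha,\nu)$ the ``natural'' (pre-truncation) $\iota$-sequence is already weakly decreasing --- is false. Take $\alpha=[3,1]$, $\nu=[2,1]$, $\epsilon=-1$. Then $\mathcal{C}_{-1}(\alpha,\nu,1,\varnothing,\{2\})=\lceil 3/3\rceil=1$ while $\mathcal{C}_{-1}(\alpha,\nu,2,\varnothing,\{1\})=\lceil 2/1\rceil=2$, so $\sigma^{-1}(1)=2$ and $\sigma^{-1}(2)=1$. The natural values are $\iota_1=2-1=1$ and $\iota_2=\mathcal{C}_{-1}(\alpha,\nu,1,\{2\},\varnothing)+1=\lceil 1/3\rceil+1=2$, so the natural sequence $[1,2]$ is strictly increasing and the ``unless'' clause does fire. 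This is exactly your delicate case $\alpha_{\sigma^{-1}(i)}>\alpha_{\sigma^{-1}(i-1)}$: the strict tiebreak buys one unit, but moving $\sigma^{-1}(i-1)$ from $I_b$ to $I_a$ drops the numerator by only $2\alpha_{\sigma^{-1}(i-1)}$, which need not drop the ceiling at all when $\alpha_{\sigma^{-1}(i)}$ is large enough. No amount of ceiling arithmetic will close this.

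The paper does not attempt the global monotonicity claim. It proves only the targeted statement that truncation never occurs at a step $i$ with $\alpha_{\sigma^{-1}(i)}=1$ (after normalizing to $\sigma=\operatorname{id}$). The argument is a short contradiction: if the natural value $\nu_i$ exceeded $\iota_{i-1}$, pick $i_0$ minimal with $\iota_{i_0}=\iota_{i-1}$; then the ranking condition at step $i_0$ forces $\mathcal{C}_{-1}(\alpha,\nu,i,J_{i_0},J'_{i_0}\setminus\{i\})\le\mathcal{C}_{-1}(\alpha,\nu,i_0,J_{i_0},J'_{i_0}\setminus\{i_0\})$, and since $\alpha_i=1$ the left side is exactly $\nu_i+\ell-2i_0+1$, giving $\nu_i\le\iota_{i_0}$ after all. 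This is the missing ingredient you need; the rest of your argument goes through once you replace the false lemma with this weaker, localized claim.
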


\begin{proof}
We prove the assertion for $\epsilon = -1$.  The proof is by induction on $\max \lbrace \alpha_1, \ldots, \alpha_{\ell} \rbrace$; we show the inductive step.  By Proposition~\ref{permute}, we may assume $\mathcal{R}_{-1}(\alpha, \nu) = \operatorname{id}$ without loss of generality.  

Let $i \in \lbrace 1, \ldots, \ell \rbrace$.  Set $\iota := \mathcal{U}_{-1}(\alpha, \nu, \operatorname{id})$.  We first claim that $\alpha_i = 1$ entails $\iota_i = \nu_i$.  To see this, suppose $\alpha_i = 1$.  Then $\mathcal{C}_{-1}(\alpha, \nu, i, I_a, I_b) =  \nu_i - |I_a| + |I_b|$.  Thus, 
\begin{align*}
\iota_i & = \mathcal{C}_{-1}(\alpha, \nu, i, \lbrace 1, \ldots, i-1 \rbrace, \lbrace i+1, \ldots, \ell \rbrace) - \ell + 2i - 1 \\ & = \nu_i - (i-1) + (\ell -i) - \ell + 2i -1 = \nu_i
\end{align*}
unless $\nu_i > \iota_{i-1}$.  If indeed $\nu_i > \iota_{i-1}$, then let $i_0$ be minimal such that $\iota_{i-1} = \iota_{i_0}$.  Since $\mathcal{R}_{-1}(\alpha, \nu) = \operatorname{id}$, 
\begin{align*}
\nu_i + \ell - 2i_0 + 1 & = \mathcal{C}_{-1}(\alpha, \nu, i, \lbrace 1, \ldots, i_0-1 \rbrace, \lbrace i_0, i_0+1, \ldots, i-1, i+1, \ldots, \ell \rbrace) \\ & \leq \mathcal{C}_{-1}(\alpha, \nu, i_0, \lbrace 1, \ldots, i_0-1 \rbrace, \lbrace i_0+1, \ldots, \ell \rbrace) \\ & = \iota_{i_0} + \ell - 2i_0 + 1 < \nu_i + \ell - 2i_0 + 1.
\end{align*}
This is a contradiction, so $\iota_i = \nu_i$ for all $i$ such that $\alpha_i = 1$.  It follows that \[(\alpha_i, \nu_i) = (1, \iota_i) = (\beta_i, \xi_i)\] for all $i$ such that $\alpha_i = 1$.  

Thus, it suffices to show, for all $1 \leq x \leq \mathcal{k}$, that the multisets \[\left \lbrace \big (\beta_{i_{(x, 1)}}, \xi_{i_{(x, 1)}} \big ), \ldots, \big (\beta_{i_{(x, \ell_x)}}, \xi_{i_{(x, \ell_x)}} \big) \right \rbrace \] and \[ \left \lbrace \big (\alpha_{i_{(x, 1)}}, \nu_{i_{(x, 1)}} \big ), \ldots, \big (\alpha_{i_{(x, \ell_x)}}, \nu_{i_{(x, \ell_x)}} \big ) \right \rbrace \] are coincident.  For all $1 \leq i' \leq \ell_x$, set $\beta^{(x)}_{i'} := \#(X^{(x)}, i')$ and $\xi^{(x)}_{i'} := \Sigma(X^{(x)}, i')$.  Note that 
\begin{align*}
\xi_{i_{(x, i')}} & = \iota_{i_{(x, i')}} + \xi^{(x)}_{i'} + \sum_{j' =1}^{\beta^{(x)}_{i'}} \left(\sum_{x' = 1}^{x-1} (\alpha^{(x')})^*_{j'} - \sum_{x' = x+1}^{\mathcal{k}} (\alpha^{(x')})^*_{j'}\right) \\ & = \iota_{i_{(x, i')}} + \xi^{(x)}_{i'} + \sum_{x' = 1}^{x-1} \sum_{i_0 = 1}^{\ell_{x'}} \min \left \lbrace \beta^{(x)}_{i'}, \alpha^{(x')}_{i_0} \right \rbrace - \sum_{x' = x+1}^{\mathcal{k}} \sum_{i_0 =1}^{\ell_{x'}} \min \left \lbrace \beta^{(x)}_{i'}, \alpha^{(x')}_{i_0} \right \rbrace.
\end{align*}

Therefore, as multisets, 
\begin{align*}
& \left \lbrace \big (\beta_{i_{(x, i')}}, \xi_{i_{(x, i')}} \big ) \right \rbrace_{i' =1}^{\ell_x} \\ & = \left \lbrace \Big (1 + \beta^{(x)}_{i'}, \iota_{i_{(x, i')}} + \xi^{(x)}_{i'} + \sum_{x' = 1}^{x-1} \sum_{i_0 = 1}^{\ell_{x'}} \min \left \lbrace \beta^{(x)}_{i'}, \alpha^{(x')}_{i_0} \right \rbrace - \sum_{x' = x+1}^{\mathcal{k}} \sum_{i_0 =1}^{\ell_{x'}} \min \left \lbrace \beta^{(x)}_{i'}, \alpha^{(x')}_{i_0} \right \rbrace \Big) \right \rbrace_{i'=1}^{\ell_x} \\ & = \left \lbrace \Big (1 + \alpha^{(x)}_{i'}, \iota_{i_{(x, i')}} + \hat{\nu}^{(x)}_{i'} + \sum_{x' = 1}^{x-1} \sum_{i_0 = 1}^{\ell_{x'}} \min \left \lbrace \alpha^{(x)}_{i'}, \alpha^{(x')}_{i_0} \right \rbrace - \sum_{x' = x+1}^{\mathcal{k}} \sum_{i_0 =1}^{\ell_{x'}} \min \left \lbrace \alpha^{(x)}_{i'}, \alpha^{(x')}_{i_0} \right \rbrace \Big) \right \rbrace_{i'=1}^{\ell_x} \\ & = \left \lbrace \big (\alpha_{i_{(x, i')}}, \iota_{i_{(x, i')}} + \nu^{(x)}_{i'} \big) \right \rbrace_{i'=1}^{\ell_x} = \left \lbrace \big (\alpha_{i_{(x, i')}}, \nu_{i_{(x, i')}} \big) \right \rbrace_{i'=1}^{\ell_x},
\end{align*}
where we obtain the second equality by recalling $\iota_{i_{(x, 1)}} = \cdots = \iota_{i_{(x, \ell_x)}}$ and applying the inductive hypothesis.  
\end{proof}

\begin{cor} \label{multiforward}
Set $\sigma := \mathcal{R}_{-1}(\alpha, \nu)$, and $\iota := \mathcal{U}_{-1}(\alpha, \nu, \sigma)$.  Then the multisets \[\left \lbrace (\beta_i - 1, \xi_i - \iota_i) : \beta_i > 1 \right \rbrace \quad \text{and} \quad \left \lbrace (\alpha_i - 1, \nu_i - \iota_{\sigma(i)}) : \alpha_i > 1 \right \rbrace \] are coincident.  
\end{cor}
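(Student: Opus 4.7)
The plan is to reduce to the case $\sigma = \operatorname{id}$ using Proposition~\ref{permute}, and then to extract the desired multiset identity directly from the intermediate computation appearing inside the proof of Proposition~\ref{multi}.

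Set $\alpha'_i := \alpha_{\sigma^{-1}(i)}$ and $\nu'_i := \nu_{\sigma^{-1}(i)}$. Proposition~\ref{permute} gives $\mathcal{A}(\alpha', \nu') = \mathcal{A}(\alpha, \nu)$, so the sequences $\beta$ and $\xi$ computed from the output diagrams are unchanged. A short induction on $i$, modeled on the proof of Proposition~\ref{permute} and leveraging the ``numerically minimal $j$'' tie-breaker of $\mathcal{R}_{-1}$, shows that $\mathcal{R}_{-1}(\alpha', \nu') = \operatorname{id}$; a direct reindexing of the arguments to $\mathcal{C}_{-1}$ then shows that $\mathcal{U}_{-1}(\alpha', \nu', \operatorname{id})$ coincides with $\iota = \mathcal{U}_{-1}(\alpha, \nu, \sigma)$ entrywise. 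Since the statement of Corollary~\ref{multiforward} for $(\alpha', \nu')$ is easily seen, by the relabeling $j \mapsto \sigma^{-1}(j)$, to be equivalent to the statement for $(\alpha, \nu)$, it suffices to verify Corollary~\ref{multiforward} under the additional assumption $\sigma = \operatorname{id}$, in which case the right-hand multiset reduces to $\lbrace (\alpha_i - 1, \nu_i - \iota_i) : \alpha_i > 1 \rbrace$.

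Under $\sigma = \operatorname{id}$, a row indexed by $i$ survives precisely when $\alpha_i > 1$, in which case $i = i_{(x, i')}$ for unique $x, i'$; moreover, $\iota_i$ depends only on the branch index $x$, so let $\iota^{(x)}$ denote its common value on the $x^{\text{th}}$ branch. The proof of Proposition~\ref{multi} establishes, as a key intermediate step, that for each $x$,
\[\lbrace (\beta_{i_{(x, i')}}, \xi_{i_{(x, i')}}) \rbrace_{i' = 1}^{\ell_x} = \lbrace (\alpha_{i_{(x, i')}}, \nu_{i_{(x, i')}}) \rbrace_{i' = 1}^{\ell_x}.\]
Subtracting the pair $(1, \iota^{(x)})$ from every element of both sides and then taking the disjoint union over $x$ produces precisely the multiset identity claimed.

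The main technical obstacle I anticipate will be the verification in the first step that $\mathcal{R}_{-1}(\alpha', \nu') = \operatorname{id}$: this requires a careful tracking of how the sets $J_i, J'_i$ appearing in the iterative definition of $\mathcal{R}_{-1}$ transport under $\sigma$, combined with the observation that any index belonging to the tied set at step $i$ of the original algorithm is selected at some step $i' \geq i$, so that its image under $\sigma$ is an integer at least $i$ --- which is what forces the numerical minimum of the image of the tied set to equal $i$. Once this reduction is in hand, the rest of the proof is bookkeeping.
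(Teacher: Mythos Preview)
Your approach is correct and is essentially the same as the paper's. The paper also works branch by branch, invoking Proposition~\ref{multi} (applied to each $(\alpha^{(x)},\hat\nu^{(x)})$) to obtain the per-branch multiset identity, then subtracts the constant first-column entry and takes the union over branches; you instead cite the equivalent intermediate identity already displayed in the proof of Proposition~\ref{multi} and subtract afterward. One remark: the reduction to $\sigma=\operatorname{id}$ that you flag as the main obstacle is in fact routine (any $j$ in the tied set at step $i$ lies in $J'_i=\sigma^{-1}\{i,\ldots,\ell\}$, so its image under $\sigma$ is at least $i$, forcing $i$ to be the numerical minimum), and the paper simply absorbs it into ``maintain the notation of Proposition~\ref{multi},'' where that assumption was already made.
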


\begin{proof}
Maintain the notation of Proposition~\ref{multi}.  By Proposition~\ref{multi}, for all $1 \leq x \leq \mathcal{k}$, \[\left \lbrace \big(\beta^{(x)}_{i'}, \xi^{(x)}_{i'}\big) \right \rbrace_{i'=1}^{\ell_x} = \left \lbrace \big(\alpha^{(x)}_{i'}, \hat{\nu}^{(x)}_{i'}\big) \right\rbrace_{i'=1}^{\ell_x}\] is an equality of multisets.  

Therefore, as multisets,
\begin{align*}
& \left \lbrace \big (\beta_{i_{(x, i')}} - 1, \xi_{i_{(x, i')}} - \iota_{i_{(x,i')}} \big ) \right \rbrace_{i' =1}^{\ell_x} \\ & = \left \lbrace \Big (\beta^{(x)}_{i'}, \xi^{(x)}_{i'} + \sum_{x' = 1}^{x-1} \sum_{i_0 = 1}^{\ell_{x'}} \min \left \lbrace \beta^{(x)}_{i'}, \alpha^{(x')}_{i_0} \right \rbrace - \sum_{x' = x+1}^{\mathcal{k}} \sum_{i_0 =1}^{\ell_{x'}} \min \left \lbrace \beta^{(x)}_{i'}, \alpha^{(x')}_{i_0} \right \rbrace \Big) \right \rbrace_{i'=1}^{\ell_x} \\ & = \left \lbrace \Big (\alpha^{(x)}_{i'}, \hat{\nu}^{(x)}_{i'} + \sum_{x' = 1}^{x-1} \sum_{i_0 = 1}^{\ell_{x'}} \min \left \lbrace \alpha^{(x)}_{i'}, \alpha^{(x')}_{i_0} \right \rbrace - \sum_{x' = x+1}^{\mathcal{k}} \sum_{i_0 =1}^{\ell_{x'}} \min \left \lbrace \alpha^{(x)}_{i'}, \alpha^{(x')}_{i_0} \right \rbrace \Big) \right \rbrace_{i'=1}^{\ell_x} \\ & = \left \lbrace \big (\alpha^{(x)}_{i'}, \nu^{(x)}_{i'} \big) \right \rbrace_{i'=1}^{\ell_x} = \left \lbrace \big (\alpha_{\sigma^{-1}(i_{(x, i')})} - 1, \nu_{\sigma^{-1}(i_{(x, i')})} - \iota_{i_{(x, i')}} \big) \right \rbrace_{i'=1}^{\ell_x}.  
\end{align*}

Taking the union of both sides over $1 \leq x \leq \mathcal{k}$, we obtain the equality of multisets \[\left \lbrace(\beta_i - 1, \xi_i - \iota_i) : \beta_i > 1 \right \rbrace = \left \lbrace (\alpha_{\sigma^{-1}(i)} -1, \nu_{\sigma^{-1}(i)} - \iota_i) : \alpha_{\sigma^{-1}(i)} > 1 \right \rbrace,\] whence the result follows.  
\end{proof}

Fix again a partition $\alpha = [\alpha_1, \ldots, \alpha_{\ell}]$ of $n$ with conjugate partition $\alpha^* = [\alpha^*_1, \ldots, \alpha^*_s]$.  Recall from section 3 that a diagram pair $(X, Y) \in \overline{E}(D_{\alpha})$ encodes three sequences related to objects in $\mathfrak{D}$ --- $\kappa(X)$, $h(X)$, and $\eta(Y)$.  If $(X,Y)$ is an output of the weight-diagrams version of the algorithm, then all three sequences carry crucial information: $\kappa(X)$ returns the input; $h(X)$ is a weight in $\Lambda^+_{\alpha, \nu}$ such that $||h(X) + 2 \rho_{\alpha}||$ is minimal, and $\eta(Y)$ is the output of the Lusztig--Vogan bijection.  

\begin{prop} \label{kap}
Let $\nu \in \Omega_{\alpha}$.  Then $\kappa p_1 \mathcal{A}(\alpha, \nu) = \nu$.  (Hence $hp_1\mathcal{A}(\alpha, \nu) \in \Lambda^+_{\alpha, \nu}$.)  
\end{prop}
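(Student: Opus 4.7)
The plan is to deduce this as an essentially formal consequence of Proposition~\ref{multi}, together with the definition of $\kappa$. Set $X := p_1 \mathcal{A}(\alpha, \nu)$, and for $1 \leq i \leq \ell$ write $\beta_i := \#(X, i)$ and $\xi_i := \Sigma(X, i)$. By Proposition~\ref{eworks}, $X$ has shape-class $\alpha$, so the multiset $\{\beta_1, \ldots, \beta_\ell\}$ coincides with $\{\alpha_1, \ldots, \alpha_\ell\}$; Proposition~\ref{multi} refines this to the coincidence of the multisets $\{(\beta_i, \xi_i)\}$ and $\{(\alpha_i, \nu_i)\}$ of ordered pairs. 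This single equality is the whole engine of the proof.

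Next I would unpack the definition of $\kappa(X)$. For each distinct part $k_t$ of $\alpha$, the quantity $\sum_{j=1}^{k_t} \kappa_X^j(t, a)$ is exactly the row-sum of the $a^{\text{th}}$ row of $X$ of length $k_t$ (read from the top). Hence the length-$a_t$ vector appearing inside the $\operatorname{dom}$ in the definition of $\kappa_X(t)$ is the tuple of $\xi_i$'s over those $i$ with $\beta_i = k_t$, listed in the natural order of $i$; in particular, $\kappa_X(t) = \operatorname{dom}([\xi_i : \beta_i = k_t])$. Restricting the multiset equality from Proposition~\ref{multi} to rows of each common length yields $\{\xi_i : \beta_i = k_t\} = \{\nu_i : \alpha_i = k_t\}$ as multisets, and hence $\kappa_X(t) = \operatorname{dom}([\nu_i : \alpha_i = k_t])$. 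Finally, because $\nu \in \Omega_\alpha$ is dominant with respect to $\alpha$ and $\alpha$ is a partition, the indices $i$ with $\alpha_i = k_t$ form a consecutive block on which $\nu$ is weakly decreasing, so the sequence $[\nu_i : \alpha_i = k_t]$ already equals its own $\operatorname{dom}$. Concatenating $\kappa_X(1), \ldots, \kappa_X(m)$ therefore reproduces $\nu$, which gives $\kappa(X) = \nu$.

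For the parenthetical remark, Proposition~\ref{eworks} supplies $(X, EX) = \mathcal{A}(\alpha, \nu) \in \overline{E}(D_\alpha)$, so Theorem~\ref{decomp} applies: $V^{(\alpha, \nu)} = V^{(\alpha, \kappa(X))}$ occurs in the decomposition of $W^{h(X)}$ as a direct sum of irreducible $G_\alpha^{\text{red}}$-representations, whence $\dim \operatorname{Hom}_{G_\alpha^{\text{red}}}\!\big(V^{(\alpha, \nu)}, W^{h(X)}\big) > 0$ and thus $h(X) \in \Lambda^+_{\alpha, \nu}$ by definition. I do not anticipate any real obstacle here: the substantive content has already been absorbed into Proposition~\ref{multi}, and the remaining work is the bookkeeping that identifies row-sums with entries of $\nu$ within each block of rows of common length, made possible precisely because both $\alpha$ and the restriction of $\nu$ to each such block are weakly decreasing.
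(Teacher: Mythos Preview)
Your proof is correct and follows exactly the approach the paper intends: the paper's own proof consists of the single sentence ``This is a direct consequence of Proposition~\ref{multi}'' (with the parenthetical attributed to Theorem~\ref{decomp}), and you have simply spelled out the bookkeeping that makes this consequence explicit. The key observation you make---that restricting the multiset equality $\{(\beta_i,\xi_i)\}=\{(\alpha_i,\nu_i)\}$ to pairs with first coordinate $k_t$ recovers the block of $\nu$ corresponding to part $k_t$, already in weakly decreasing order---is precisely the unstated content of the paper's one-line proof.
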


\begin{proof}
This is a direct consequence of Proposition~\ref{multi}.  (The statement in parentheses follows from Theorem~\ref{decomp}.)    
\end{proof}

\begin{thm} \label{mini}
Let $\nu \in \Omega_{\alpha}$.  Then $||hp_1\mathcal{A}(\alpha, \nu) + 2 \rho_{\alpha}|| = \min \lbrace ||\bar{\mu} + 2 \rho_{\alpha}|| : \bar{\mu} \in \Lambda^+_{\alpha, \nu} \rbrace$.  
\end{thm}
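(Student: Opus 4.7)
The plan is to reduce Theorem~\ref{mini} to a result of Achar by identifying $p_1 \mathcal{A}(\alpha, \nu)$ with the diagram produced by his algorithm $\mathsf{A}$. By Achar (\cite{Acharj}, Corollary 8.9), for every $(\alpha, \nu) \in \Omega$ the diagram $\mathsf{A}(\alpha, \nu)$ is a weight diagram of shape-class $\alpha$ satisfying $\kappa \mathsf{A}(\alpha, \nu) = \nu$ together with
\[
\|h\mathsf{A}(\alpha, \nu) + 2\rho_\alpha\| = \min \{ \|\bar{\mu} + 2\rho_\alpha\| : \bar{\mu} \in \Lambda^+_{\alpha, \nu} \},
\]
and this diagram is uniquely characterized among diagrams with $\kappa$-value $\nu$ by being \emph{distinguished} in Achar's sense. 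Hence to prove Theorem~\ref{mini} it suffices to establish the identity $p_1 \mathcal{A}(\alpha, \nu) = \mathsf{A}(\alpha, \nu)$, which reduces to a single combinatorial claim: the diagram $p_1 \mathcal{A}(\alpha, \nu)$ is distinguished. The condition $\kappa p_1 \mathcal{A}(\alpha, \nu) = \nu$ is already in hand as Proposition~\ref{kap}.

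The plan for proving distinguished-ness, carried out in detail in section~5, is induction on the largest part of $\alpha$. The base case, when every part of $\alpha$ equals $1$, is immediate because no branching occurs and the algorithm returns the sole candidate. For the inductive step I would analyze the two pieces that $\mathcal{A}$ assembles to form $p_1 \mathcal{A}(\alpha, \nu)$: the first column, obtained from $\mathcal{U}_{-1}(\alpha, \nu, \mathcal{R}_{-1}(\alpha, \nu))$, and the remaining columns, obtained by grafting each recursive output $X^{(x)} = p_1 \mathcal{A}(\alpha^{(x)}, \hat{\nu}^{(x)}, +1)$ onto the first column via the attachment rule of Equation~\ref{attachx}. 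Since the sign $\epsilon$ flips with each recursion, the inductive hypothesis provides distinguished-ness of each $X^{(x)}$ with respect to the adjusted residual input, and the remaining work is to show that the local distinguished-ness of the constituent $X^{(x)}$ extends to global distinguished-ness of the full diagram.

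The principal obstacle is that Achar's distinguished criteria are highly non-local: they compare entries across adjacent columns and implicitly across branches, whereas $\mathcal{A}$ computes branches independently. The key to overcoming this is that the arithmetic of the candidate-ceiling and candidate-floor functions is precisely calibrated for this purpose. I would verify that the $-\sum_{j \in I_a} \min\{\alpha_i, \alpha_j\} + \sum_{j \in I_b} \min\{\alpha_i, \alpha_j\}$ corrections appearing in $\mathcal{C}_{\pm 1}$, together with the cross-branch adjustments $\hat{\nu}^{(x)}$, translate the inter-branch inequalities Achar demands into intra-branch inequalities already guaranteed by the inductive hypothesis. Once this translation is in place, the column-shifts $\pm \sum_{x'} (\alpha^{(x')})^*_{j'}$ in Equation~\ref{attachx} produce a diagram whose entries satisfy all of Achar's conditions.

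With this combinatorial identification $p_1 \mathcal{A}(\alpha, \nu) = \mathsf{A}(\alpha, \nu)$ secured, Theorem~\ref{mini} follows immediately: $h p_1 \mathcal{A}(\alpha, \nu) = h \mathsf{A}(\alpha, \nu)$, and the minimality of $\|h \mathsf{A}(\alpha, \nu) + 2\rho_\alpha\|$ over $\Lambda^+_{\alpha, \nu}$ is exactly Achar's Corollary~8.9. The bulk of the real work is thus packaged into section~5; the role of the present section is to set up the data $(\alpha^{(x)}, \hat{\nu}^{(x)})$ and the attachment rule in a form suitable for that verification, and to record the basic structural properties (Propositions~\ref{permute}, \ref{eworks}, \ref{multi}, and Corollary~\ref{multiforward}) that will be invoked repeatedly there.
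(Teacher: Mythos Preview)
Your proposal is correct and follows essentially the same approach as the paper: reduce Theorem~\ref{mini} to Theorem~\ref{mata} (the identity $p_1\mathcal{A}(\alpha,\nu)=\mathsf{A}(\alpha,\nu)$), which in turn is proved in section~5 by showing $p_1\mathcal{A}(\alpha,\nu)$ is distinguished and invoking Achar's uniqueness result (Theorem~8.8) together with Proposition~\ref{kap}; the minimality then follows from Achar's Corollary~8.9. Your sketch of the inductive verification of distinguished-ness also matches the structure of the paper's argument.
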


\begin{cor} \label{etz}
Let $\nu \in \Omega_{\alpha}$.  Then $\operatorname{dom}(hp_1 \mathcal{A}(\alpha, \nu) + 2 \rho_{\alpha}) = \gamma(\alpha, \nu)$.  
\end{cor}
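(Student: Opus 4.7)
The plan is to deduce Corollary~\ref{etz} as an immediate consequence of Proposition~\ref{kap} and Theorem~\ref{mini}, invoking the minimality argument already laid out in section 1.3. Specifically, set $\mu := hp_1\mathcal{A}(\alpha,\nu)$. Proposition~\ref{kap} gives $\mu \in \Lambda^+_{\alpha,\nu}$, and Theorem~\ref{mini} says
\[
\|\mu + 2\rho_\alpha\| = \min\{\|\mu' + 2\rho_\alpha\| : \mu' \in \Lambda^+_{\alpha,\nu}\}.
\]
This is precisely the hypothesis of the characterization of $\gamma$ recalled at the end of section 1.3 (following Achar \cite{Achart, Acharj}), which asserts that any $\mu \in \Lambda^+_{\alpha,\nu}$ minimizing $\|\mu + 2\rho_\alpha\|$ satisfies $\gamma(\alpha,\nu) = \operatorname{dom}(\mu + 2\rho_\alpha)$.

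For a self-contained proof, I would reproduce Achar's argument as previewed in section 1.3: Since $\mu \in \Lambda^+_{\alpha,\nu}$, Lemma~\ref{omega} ensures that $[IC_{(\alpha,\nu)}]$ appears with nonzero coefficient when $[A^\alpha_\mu]$ is expanded on the $\Omega$-basis, so
\[
\big[A^\alpha_\mu\big] = c\,\big[IC_{(\alpha,\nu)}\big] + \sum_{\upsilon \neq \nu} c_{\alpha,\upsilon}\big[IC_{(\alpha,\upsilon)}\big] + \sum_{\beta \vartriangleleft \alpha,\,\xi} c_{\beta,\xi}\big[IC_{(\beta,\xi)}\big]
\]
with $c \neq 0$. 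On the other hand, Lemma~\ref{lambda} together with the fact (Achar \cite{Acharj}, Proposition 2.2) that $[A_{\mu'}]$ expands on $\{[A_\lambda]\}_{\lambda \leq \operatorname{dom}(\mu')}$ yields
\[
\big[A^\alpha_\mu\big] = \pm\big[A_{\operatorname{dom}(\mu + 2\rho_\alpha)}\big] + \sum_{\lambda < \operatorname{dom}(\mu + 2\rho_\alpha)} c_\lambda \big[A_\lambda\big].
\]
Comparing the two expansions via Theorem~\ref{bez}, the unique maximal-weight term $\operatorname{dom}(\mu + 2\rho_\alpha)$ must equal one of $\gamma(\alpha,\nu)$, $\gamma(\alpha,\upsilon)$ for some $\upsilon \neq \nu$, or $\gamma(\beta,\xi)$ for some $\beta \vartriangleleft \alpha$.

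The one substantive step is ruling out the latter two cases: this is where the \emph{minimality} of $\|\mu + 2\rho_\alpha\|$ enters, exactly as in Claim 2.3.1 of Achar \cite{Achart}. One argues that if $\operatorname{dom}(\mu + 2\rho_\alpha) = \gamma(\alpha,\upsilon)$ or $\gamma(\beta,\xi)$ then one could produce a strictly smaller element of $\Lambda^+_{\alpha,\nu}$ (or of $\Lambda^+_{\beta,\xi}$, contradicting the induction on the dominance order on partitions), contradicting minimality. Since this case analysis is carried out verbatim in Achar, and the author explicitly cites it, I would simply defer to that reference rather than reproduce it. The only genuinely new content here is the identification of $\mu$ with $hp_1\mathcal{A}(\alpha,\nu)$, which is what Proposition~\ref{kap} and Theorem~\ref{mini} together supply; no further obstacle remains.
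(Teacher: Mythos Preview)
Your proposal is correct and follows essentially the same approach as the paper: the paper's proof is a one-line invocation of Proposition~\ref{kap} (for $\mu \in \Lambda^+_{\alpha,\nu}$) and Theorem~\ref{mini} (for minimality of $\|\mu+2\rho_\alpha\|$), together with the characterization of $\gamma$ from section~1.3 (attributed via footnote to Achar's Claim~2.3.1 and Theorem~8.10). Your additional paragraph sketching the mechanics of Achar's argument is extra exposition, not a different route.
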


\begin{proof}
This follows from (the parenthetical statement in) Proposition~\ref{kap} and Theorem~\ref{mini}, as discussed in the introduction (cf. footnote 3).  
\end{proof}

\begin{cor} \label{eta}
Let $\nu \in \Omega_{\alpha}$.  Then $\eta p_2 \mathcal{A}(\alpha, \nu) = \gamma(\alpha, \nu)$.  
\end{cor}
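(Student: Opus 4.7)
The plan is to combine three previously established facts: Proposition~\ref{eworks}, the compatibility identity~\ref{compat}, and Corollary~\ref{etz}.

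First I would set $(X,Y) := \mathcal{A}(\alpha,\nu)$, so the statement becomes $\eta(Y) = \gamma(\alpha,\nu)$. Proposition~\ref{eworks} supplies $Y = EX$. Provided we can show that the entries of $X$ are weakly decreasing down each column, Equation~\ref{compat} then gives
\[\eta(Y) \;=\; \eta(EX) \;=\; \operatorname{dom}(h(X) + 2\rho_\alpha),\]
and Corollary~\ref{etz} identifies the right-hand side with $\gamma(\alpha,\nu)$, completing the argument.

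The main obstacle is verifying the column-monotonicity of $X = p_1\mathcal{A}(\alpha,\nu)$.  I would induct on the maximum part of $\alpha$, i.e., on the number of columns.  The base case is handled by construction: the first column is filled with $\iota := \mathcal{U}_{-1}(\alpha,\nu,\sigma) \in \mathbb{Z}^{\ell}_{\text{dom}}$, which is weakly decreasing by definition.  For column $j'+1$, the inductive hypothesis applied to each sub-output $X^{(x)} = p_1\mathcal{A}(\alpha^{(x)},\hat{\nu}^{(x)},-\epsilon)$ gives column-monotonicity of $X^{(x)}$; since the branch-dependent shift $\sum_{x'<x}(\alpha^{(x')})^*_{j'} - \sum_{x'>x}(\alpha^{(x')})^*_{j'}$ in Equation~\ref{attachx} is uniform across the rows of a given branch, monotonicity is automatically inherited inside each branch of $X$.

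The delicate step, which is where essentially all of the work lies, is the transition between adjacent branches.  One must show that if branches $x$ and $x+1$ both contribute to column $j'+1$, then
\[X^{(x)}_{\ell_x,\,j'} - X^{(x+1)}_{1,\,j'} \;\geq\; (\alpha^{(x)})^*_{j'} + (\alpha^{(x+1)})^*_{j'}.\]
This amounts to comparing the outputs of two recursive calls attached to adjacent groups of rows whose column-$1$ entries satisfy $\iota_{i_{(x,1)}} > \iota_{i_{(x+1,1)}}$ (strict, since they populate distinct branches of a weakly decreasing sequence).  The inequality should fall out of the candidate-ceiling/floor bounds governing $\mathcal{U}_{\pm 1}$ together with the ranking rules that determined $\sigma$, but tracing through the bookkeeping across recursion levels is the non-routine part.
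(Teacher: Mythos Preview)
Your high-level strategy matches the paper's exactly: combine Proposition~\ref{eworks} ($Y=EX$), Equation~\ref{compat}, and Corollary~\ref{etz}.  The only issue is the column-monotonicity of $X=p_1\mathcal{A}(\alpha,\nu)$, which you attempt to prove directly by induction.

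Here the paper takes a different route: rather than isolate column-monotonicity, it invokes Corollary~\ref{dist} (proved in Section~5), which says $p_1\mathcal{A}(\alpha,\nu)$ is distinguished.  Condition~(4) of Definition~\ref{dis}, namely $Y^j_i - Y^j_{i+1}\ge 2$, is equivalent (via $Y=EX$) to $X^j_i\ge X^j_{i+1}$, so monotonicity drops out.

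Your direct approach has a genuine gap at precisely the point you flag: the cross-branch inequality
\[
X^{(x)}_{\ell_x,\,j'} - X^{(x+1)}_{1,\,j'} \;\ge\; (\alpha^{(x)})^*_{j'} + (\alpha^{(x+1)})^*_{j'}
\]
does not follow from routine bookkeeping.  In the paper, the corresponding step (condition~(4) in the proof of Theorem~\ref{puttog}) requires Lemma~\ref{collapse}, Theorem~\ref{differences}, and both extremal-entry Lemmas~\ref{bigentry} and~\ref{smallentry}; the last two in turn rest on the full machinery of the candidate-ceiling/floor analysis.  So the ``non-routine part'' you defer is in fact the bulk of Section~5, and there is no shortcut that avoids it.  Your sketch is correct as far as it goes, but to close the argument you should simply cite Corollary~\ref{dist} for the monotonicity, as the paper does.
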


\begin{proof}
Recall from Equation~\ref{compat} that $\eta p_2 \mathcal{A}(\alpha, \nu) = \operatorname{dom}(hp_1 \mathcal{A}(\alpha, \nu) + 2 \rho_{\alpha})$.\footnote{Equation~\ref{compat} holds under the assumption that the entries of $X$ are weakly decreasing down each column, which is certainly the case if $X$ is distinguished (cf. condition (4) of Definition~\ref{dis}).  We prove that $p_1 \mathcal{A}(\alpha, \nu)$ is distinguished in the next section (cf. Corollary~\ref{dist}).  } 
\end{proof}  

Corollaries~\ref{etz} and ~\ref{eta} express that $\mathcal{A}$ computes the Lusztig--Vogan bijection.  Just as Corollary~\ref{etz} follows from Proposition~\ref{kap} and Theorem~\ref{mini}, that $\mathfrak{A}$ computes the Lusztig--Vogan bijection (as expressed in Theorem~\ref{main}) follows from Corollary~\ref{inside} and the following theorem, which we deduce from Theorem~\ref{mini}.    

\begin{thm} \label{thesame}
Let $\nu \in \Omega_{\alpha}$.  Then $||\mathfrak{A}(\alpha, \nu) + 2 \rho_{\alpha}|| = \min \lbrace ||\bar{\mu} + 2 \rho_{\alpha} || : \bar{\mu} \in \Lambda^+_{\alpha, \nu} \rbrace$.  
\end{thm}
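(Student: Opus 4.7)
The plan is to sandwich $\|\mathfrak{A}(\alpha,\nu)+2\rho_\alpha\|$ between $\min\{\|\bar\mu+2\rho_\alpha\|:\bar\mu\in\Lambda^+_{\alpha,\nu}\}$ and $\|hp_1\mathcal{A}(\alpha,\nu)+2\rho_\alpha\|$ and invoke Theorem~\ref{mini} to collapse the chain.  The lower bound follows from Corollary~\ref{inside}, which places $\mathfrak{A}(\alpha,\nu)$ in $\Lambda^+_{\alpha,\nu}$, together with Theorem~\ref{mini}, which equates the minimum with $\|hp_1\mathcal{A}(\alpha,\nu)+2\rho_\alpha\|$.  The proof thus reduces to the reverse inequality
\[
\|\mathfrak{A}(\alpha,\nu)+2\rho_\alpha\|\le\|hp_1\mathcal{A}(\alpha,\nu)+2\rho_\alpha\|,
\]
which I would establish by induction on $s=\alpha_1$.

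The base case $s=1$ is immediate: both algorithms reduce to a single invocation of $\mathcal{R}_{-1}$ and $\mathcal{U}_{-1}$, and both sides equal $\|\mathcal{U}_{-1}(\alpha,\nu,\mathcal{R}_{-1}(\alpha,\nu))+2\rho_\alpha\|$.  For the inductive step, the first columns automatically coincide: the first $\alpha^*_1$ entries of $\mathfrak{A}(\alpha,\nu)$ are by construction $\mu^1:=\mathcal{U}_{-1}(\alpha,\nu,\mathcal{R}_{-1}(\alpha,\nu))$, while the first $\alpha^*_1$ entries of $hp_1\mathcal{A}(\alpha,\nu)$ are $\operatorname{dom}$ of the already weakly decreasing first column of $p_1\mathcal{A}(\alpha,\nu)$, which is the same $\mu^1$.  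Decomposing each squared norm column-by-column via the concatenation $2\rho_\alpha=(2\rho^1_\alpha,\ldots,2\rho^s_\alpha)$ with $2\rho^j_\alpha=[\alpha^*_j-1,\alpha^*_j-3,\ldots,1-\alpha^*_j]$ reduces the desired inequality to
\[
\|\mathfrak{A}(\alpha',\nu')+2\rho_{\alpha'}\|\le\|\bar\mu'+2\rho_{\alpha'}\|,
\]
where $\bar\mu':=(h_X^2,\ldots,h_X^s)$ for $X:=p_1\mathcal{A}(\alpha,\nu)$ and $(\alpha',\nu')$ is the residual input of $\mathfrak{A}$.

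To invoke the inductive hypothesis on $(\alpha',\nu')$ (which has $\alpha'_1=s-1$), I need two ingredients.  First, I would prove as a separate lemma that $\nu\in\Omega_\alpha$ forces $\nu'\in\Omega_{\alpha'}$, via a case analysis of how $\mathcal{R}_{-1}$ and $\mathcal{U}_{-1}$ treat adjacent equal-length rows of $\alpha$ to control $\nu_i-\mu^1_{\sigma^1(i)}$ versus $\nu_{i+1}-\mu^1_{\sigma^1(i+1)}$.  Second, I need $\bar\mu'\in\Lambda^+_{\alpha',\nu'}$, which I would derive from Corollary~\ref{decamp} as follows.  Corollary~\ref{multiforward} supplies a bijection $\pi$ from $\{1,\ldots,\alpha^*_2\}$ to the surviving rows of $X$ satisfying $(\#(X,\pi(i))-1,\,\Sigma(X,\pi(i))-\iota_{\pi(i)})=(\alpha'_i,\nu'_i)$.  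Setting $\nu'_{i,j'}:=X_{\pi(i),j'+1}$, the row-sum identity $\sum_{j'}\nu'_{i,j'}=\nu'_i$ falls out immediately, and the column-dom condition $h_X^{j'+1}=\operatorname{dom}([\nu'_{1,j'},\ldots,\nu'_{(\alpha')^*_{j'},j'}])$ holds because $\pi$ restricts to a bijection between $\{1,\ldots,(\alpha')^*_{j'}\}=\{1,\ldots,\alpha^*_{j'+1}\}$ and the rows of $X$ of length at least $j'+1$, which are precisely the support of column $j'+1$ of $X$.  The inductive hypothesis then closes the inequality.

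The main obstacle will be orchestrating the bijection $\pi$: the argument depends on Corollary~\ref{multiforward} being exactly the combinatorial bridge required to convert the branching structure $\mathcal{A}$ imposes on $X$ into a decomposition of $\bar\mu'$ fit for Corollary~\ref{decamp}.  The auxiliary lemma $\nu'\in\Omega_{\alpha'}$, though not deep, is a fiddly but essential piece of bookkeeping needed to keep the inductive hypothesis applicable at every step of the recursion.
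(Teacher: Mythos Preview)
Your proposal is correct and follows essentially the same route as the paper's proof: sandwich via Theorem~\ref{mini} and Corollary~\ref{inside}, induct on $s$, observe that the first columns of $\mathfrak{A}(\alpha,\nu)$ and $hp_1\mathcal{A}(\alpha,\nu)$ coincide (the paper's Equation~\ref{first}), and use Corollary~\ref{multiforward} together with Corollary~\ref{decamp} to place the truncated weight $\breve{\mu}'$ in $\Lambda^+_{\alpha',\nu'}$ so the inductive hypothesis applies. You are in fact slightly more careful than the paper in flagging the auxiliary claim $\nu'\in\Omega_{\alpha'}$, which the paper's proof silently takes for granted when invoking the inductive hypothesis.
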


\begin{proof}
Assume Theorem~\ref{mini} holds.  Set $\mu := \mathfrak{A}(\alpha, \nu)$ and $\breve{\mu} := hp_1 \mathcal{A}(\alpha, \nu)$.  Recall from Corollary~\ref{inside} that $\mu \in \Lambda^+_{\alpha, \nu}$ and from Proposition~\ref{kap} that $\breve{\mu} \in \Lambda^+_{\alpha, \nu}$.  Note that 
\begin{align} \label{first}
[\mu_1, \ldots, \mu_{\ell}] = [\breve{\mu}_1, \ldots, \breve{\mu}_{\ell}].
\end{align}

In other words, the first $\ell$ entries of $\mathfrak{A}(\alpha, \nu)$ agree with the first $\ell$ entries of $hp_1 \mathcal{A}(\alpha, \nu)$; both $\mathfrak{A}$ and $hp_1\mathcal{A}$ assign the same weight to the first factor $GL_{\alpha^*_1}$ of $L_{\alpha}$.  

To prove the theorem, we induct on $s$.  For the inductive step, set $\mu' := [\mu_{\ell + 1}, \ldots, \mu_n]$ and $\breve{\mu}' := [\breve{\mu}_{\ell+1}, \ldots, \breve{\mu}_n]$.

Maintain the notation of sections 2.1 and 2.2.  By Corollary~\ref{inside}, $\mu' \in \Lambda^+_{\alpha', \nu'}$.  We claim that $\breve{\mu}' \in \Lambda^+_{\alpha',  \nu'}$.  Indeed, since $(\alpha_i - 1, \nu_i - \mu_{\sigma(i)}) = (\alpha'_i, \nu'_i)$ for all $1 \leq i \leq \alpha^*_2$, Corollary~\ref{multiforward} (in view of Equation~\ref{first}) tells us that the multisets \[\lbrace \big(\beta_i - 1, \xi_i - \breve{\mu}_i \big) : \beta_i > 1 \rbrace \quad \text{and} \quad \lbrace (\alpha'_i, \nu'_i) \rbrace_{i=1}^{\alpha^*_2}\] are coincident.  

Therefore, there exists a function $\zeta \colon \lbrace 1, \ldots, \alpha^*_2 \rbrace \rightarrow \lbrace 1, \ldots, \ell \rbrace$ such that $\beta_{\zeta(i)} > 1$ and $(\beta_{\zeta(i)} - 1, \xi_{\zeta(i)} - \breve{\mu}_{\zeta(i)}) = (\alpha'_i, \nu'_i)$ for all $1 \leq i \leq \alpha^*_2$.  For all $1 \leq i \leq \alpha^*_2$ and $1 \leq j \leq \alpha'_i$, set $\nu'_{i,j} := X_{\zeta(i), j+1}$.  Then the claim follows from Corollary~\ref{decamp}.  

Thus, by the inductive hypothesis, \[||\mu' + 2 \rho_{\alpha'}|| = \min \lbrace || \bar{\mu}' + 2 \rho_{\alpha'}|| : \bar{\mu}' \in \Lambda^+_{\alpha', \nu'} \rbrace \leq ||\breve{\mu}' + 2 \rho_{\alpha'}||.\]

It follows that
\begin{align*}
||\mu + 2 \rho_{\alpha}||^2 & = ||[\mu_1 + \ell - 1, \ldots, \mu_{\ell} + 1 - \ell]||^2 + ||\mu' + 2 \rho_{\alpha'}||^2 \\ & \leq ||[\breve{\mu}_1 + \ell - 1, \ldots, \breve{\mu}_{\ell} + 1 - \ell]||^2 + ||\breve{\mu}' + 2 \rho_{\alpha'}||^2 \\ & = ||\breve{\mu} + 2 \rho_{\alpha}||^2 \\ & = \min \lbrace ||\bar{\mu} + 2 \rho_{\alpha}||^2 : \bar{\mu} \in \Lambda^+_{\alpha, \nu} \rbrace \\ & \leq ||\mu + 2 \rho_{\alpha}||^2,
\end{align*}
where the first inequality follows from Equation~\ref{first} and the third equality follows from Theorem~\ref{mini}.  

We conclude that $||\mu + 2 \rho_{\alpha}|| = \min \lbrace ||\bar{\mu} + 2 \rho_{\alpha}|| : \bar{\mu} \in \Lambda^+_{\alpha, \nu} \rbrace$, as desired.  
\end{proof}

It remains is to prove Theorem~\ref{mini}.  In the next section, we make good on our pledge to prove that the weight-diagrams version of our algorithm encompasses Achar's algorithm; in particular, we prove the following theorem, whence Theorem~\ref{mini} follows immediately.  

\begin{thm} \label{mata}
Let $\nu \in \Omega_{\alpha}$.  Then $p_1 \mathcal{A}(\alpha, \nu) = \mathsf{A}(\alpha, \nu)$.  
\end{thm}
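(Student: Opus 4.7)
The plan is to follow the roadmap sketched in the Outline: prove that the diagram $p_1\mathcal{A}(\alpha, \nu)$ satisfies Achar's criteria for being \emph{distinguished} (to be stated at the beginning of Section 5), then combine this with Proposition~\ref{kap} and Achar's own characterization \cite{Acharj} of $\mathsf{A}(\alpha, \nu)$ as the unique distinguished diagram $X$ with $\kappa(X) = \nu$. Since $\kappa p_1\mathcal{A}(\alpha, \nu) = \nu$ by Proposition~\ref{kap}, everything will reduce to verifying that $p_1\mathcal{A}(\alpha, \nu)$ meets each item of the distinguished checklist. So my first step is to record Achar's distinguished conditions and cite the uniqueness clause.

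Next, I would induct on the number of columns (equivalently, on $\alpha_1$). The base case $\alpha_1 = 1$ amounts to a single-column diagram whose entries are exactly $\mathcal{U}_{-1}(\alpha, \nu, \mathcal{R}_{-1}(\alpha, \nu))$; here the distinguished conditions reduce to the weak monotonicity built into $\mathcal{U}_{-1}$ together with the ceiling identity $\mathcal{C}_{-1}(\alpha, \nu, i, I_a, I_b)$ witnessing minimality of $||h(X) + 2\rho_\alpha||$ — both essentially tautological. For the inductive step, each recursive call $\mathcal{A}(\alpha^{(x)}, \hat{\nu}^{(x)}, -\epsilon)$ yields, by the inductive hypothesis (with the sign flip $-\epsilon$ invoking the floors-based twin arguments in parallel), a distinguished sub-diagram $X^{(x)}$ of shape-class $\alpha^{(x)}$. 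Proposition~\ref{eworks} guarantees the reassembled diagram has the correct overall shape, Proposition~\ref{multi} pins down the row-sums, and it remains to show that the distinguished property survives the attachment procedure of Equation~\ref{attachx}, i.e., after each $X^{(x)}_{i', j'}$ is shifted by $\sum_{x'=1}^{x-1}(\alpha^{(x')})^*_{j'} - \sum_{x'=x+1}^{\mathcal{k}}(\alpha^{(x')})^*_{j'}$.

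The main obstacle will be matching Achar's local distinguished inequalities to the behavior of the ranking-by-ceilings and column-ceilings functions at the interface between column one and the remaining columns. Concretely, I must show that the order in which $\mathcal{R}_{-1}$ promotes rows into position, together with the shift in Equation~\ref{attachx}, produces first-column entries consistent with the entries of the branch diagrams $X^{(x)}$ in the way Achar's cross-column comparisons demand; and, dually, I must verify that when the candidate-ceiling function $\mathcal{C}_{-1}(\alpha, \nu, \cdot, \cdot, \cdot)$ is passed from $\alpha$ to $\alpha^{(x)}$ with adjusted argument $\hat{\nu}^{(x)}$, the resulting sub-diagram inequalities align with those that $\mathcal{R}_{-1}$ and $\mathcal{U}_{-1}$ originally enforced at the top level. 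I expect this to hinge on a single bookkeeping lemma exploiting the identity $\sum_{x'}\min\lbrace \alpha^{(x)}_{i'}, \alpha^{(x')}_{i_0}\rbrace$ appearing in the definition of $\hat{\nu}^{(x)}$ and on the recursive switching between $\epsilon = -1$ and $\epsilon = 1$ that keeps the comparison sense of floors and ceilings aligned with Achar's column-by-column test. Once this combinatorial translation is complete, distinguished-ness of $p_1\mathcal{A}(\alpha, \nu)$ follows, and the theorem is immediate.
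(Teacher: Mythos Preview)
Your plan is essentially the paper's: prove that $p_1\mathcal{A}(\alpha,\nu)$ is distinguished by induction on the number of columns, then invoke Proposition~\ref{kap} together with Achar's uniqueness theorem (stated here as Theorem~\ref{achar}) to conclude $p_1\mathcal{A}(\alpha,\nu)=\mathsf{A}(\alpha,\nu)$. The only correction is to your expectation that the interface step ``hinges on a single bookkeeping lemma'': in the paper this step occupies nearly all of Section~5 and splits into several substantial pieces --- a concatenation lemma (Lemma~\ref{collapse}) reducing to the case where the first column is constant, a long computation (Theorem~\ref{differences}) establishing condition~(1) of Definition~\ref{dis} directly from the interplay of $\mathcal{R}_{\pm 1}$ and $\mathcal{U}_{\pm 1}$, two extremal-entry lemmas (Lemmas~\ref{bigentry} and~\ref{smallentry}), and finally the assembly (Theorem~\ref{puttog}) verifying conditions~(2)--(4) from these ingredients plus the inductive hypothesis.
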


\begin{cor}
Theorem~\ref{mini} holds.  	
\end{cor}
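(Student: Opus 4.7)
The plan is to derive this directly by chaining Theorem~\ref{mata} with Achar's known minimality result. Theorem~\ref{mata} asserts the identification $p_1 \mathcal{A}(\alpha,\nu) = \mathsf{A}(\alpha,\nu)$ as weight diagrams of shape-class $\alpha$. Applying the map $h \colon D_{\alpha} \to \Lambda^+_{\alpha}$ to both sides yields
\[ h p_1 \mathcal{A}(\alpha,\nu) \;=\; h \mathsf{A}(\alpha,\nu). \]
By Proposition~\ref{kap}, the left-hand side lies in $\Lambda^+_{\alpha,\nu}$, so the same is true of $h \mathsf{A}(\alpha,\nu)$ (this is also part of Achar's statement).

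Next I would invoke Achar's Corollary 8.9 in \cite{Acharj}, recalled in the introduction, which asserts precisely that on input $(\alpha,\nu)$ Achar's algorithm returns a weight diagram $\mathsf{A}(\alpha,\nu)$ satisfying
\[ \bigl\| h \mathsf{A}(\alpha,\nu) + 2\rho_{\alpha} \bigr\| \;=\; \min\bigl\lbrace \|\bar{\mu} + 2\rho_{\alpha}\| : \bar{\mu} \in \Lambda^+_{\alpha,\nu} \bigr\rbrace. \]
Combining the two displayed equalities gives the conclusion of Theorem~\ref{mini}.

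I do not expect any obstacle in this deduction itself; the entire content has been shifted into Theorem~\ref{mata}, whose proof occupies the subsequent section. The only subtlety worth flagging in the write-up is that one must cite Achar's minimality statement in the precise form above (as opposed to an intermediate statement about individual steps of Achar's algorithm), so the argument should explicitly reference Corollary 8.9 of \cite{Acharj} rather than merely the design of Achar's monovariant.
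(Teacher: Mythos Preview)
Your proposal is correct and takes essentially the same approach as the paper: identify $p_1\mathcal{A}(\alpha,\nu)$ with $\mathsf{A}(\alpha,\nu)$ via Theorem~\ref{mata}, then invoke Achar's Corollary~8.9 from \cite{Acharj} for the minimality of $\|h\mathsf{A}(\alpha,\nu)+2\rho_\alpha\|$. The paper's own proof is the one-line version of exactly this deduction.
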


\begin{proof}
Note that $||h\mathsf{A}(\alpha, \nu) + 2 \rho_{\alpha}|| = \min \lbrace ||\bar{\mu} + 2 \rho_{\alpha}|| : \bar{\mu} \in \Lambda^+_{\alpha, \nu} \rbrace$ (cf. Achar \cite{Acharj},  Corollary 8.9).  
\end{proof}

\vfill \eject

\section{Proof of Theorem~\ref{mata}}

The crux of the proof is a simple characterization of the diagram pairs that occur as outputs of the algorithm $\mathcal{A}$.  These \textit{distinguished} diagram pairs are images under $\overline{E}$ of the \textit{distinguished} diagrams of Achar \cite{Acharj}.  We start by defining distinguished diagrams and diagram pairs.  

\begin{df}
Let $Y$ be a diagram of shape-class $\alpha$.  The entry $Y^j_i$ is \textit{$E$-raisable} if $i=1$, or if $i > 1$ and $Y^j_{i-1} > Y^j_i + 2$.  The entry $Y^j_i$ is \textit{$E$-lowerable} if $i = \alpha^*_j$, or if $i < \alpha^*_j$ and $Y^j_{i+1} < Y^j_i - 2$.  
\end{df}

\begin{df} \label{dis}
Let $X \in D_{\alpha}$, and set $Y := EX$.  Then the diagram $X$ and the diagram pair $(X,Y)$ are \textit{odd-distinguished} if the following four conditions hold.
\begin{enumerate}
	\item $Y_{i,j+1} - Y_{i,j} \in \lbrace 0, (-1)^j \rbrace$.
	\item For all $1 \leq j < j' \leq s$ such that $j \equiv j' \pmod 2$:
	\begin{enumerate}
		\item If $j$ and $j'$ are odd and $Y_{i,j} \leq Y_{i,j'} - 1$, then $Y_{i,j}$ is not $E$-raisable;
		\item If $j$ and $j'$ are even and $Y_{i,j} \geq Y_{i, j'} + 1$, then $Y_{i,j}$ is not $E$-lowerable.  
	\end{enumerate}
	\item For all $1 \leq j < j' \leq s$:
	\begin{enumerate}
		\item If $Y_{i,j} \leq Y_{i,j'} - 2$, then $Y_{i,j}$ is not $E$-raisable;
		\item If $Y_{i,j} \geq Y_{i,j'} + 2$, then $Y_{i,j}$ is not $E$-lowerable.  
	\end{enumerate}
	\item $Y^j_i - Y^j_{i+1} \geq 2$.
\end{enumerate}
\end{df}

\begin{df}
	Let $X \in D_{\alpha}$, and set $Y := EX$.  Then the diagram $X$ and the diagram pair $(X,Y)$ are \textit{even-distinguished} if the following four conditions hold.
	\begin{enumerate}
		\item $Y_{i,j+1} - Y_{i,j} \in \lbrace 0, (-1)^{j+1} \rbrace$.
		\item For all $1 \leq j < j' \leq s$ such that $j \equiv j' \pmod 2$:
		\begin{enumerate}
			\item If $j$ and $j'$ are even and $Y_{i,j} \leq Y_{i,j'} - 1$, then $Y_{i,j}$ is not $E$-raisable;
			\item If $j$ and $j'$ are odd and $Y_{i,j} \geq Y_{i, j'} + 1$, then $Y_{i,j}$ is not $E$-lowerable.  
		\end{enumerate}
		\item For all $1 \leq j < j' \leq s$:
		\begin{enumerate}
			\item If $Y_{i,j} \leq Y_{i,j'} - 2$, then $Y_{i,j}$ is not $E$-raisable;
			\item If $Y_{i,j} \geq Y_{i,j'} + 2$, then $Y_{i,j}$ is not $E$-lowerable.  
		\end{enumerate}
		\item $Y^j_{i} - Y^j_{i+1} \geq 2$.
	\end{enumerate}	
\end{df}

We refer to odd-distinguished diagrams and diagram pairs as just \textit{distinguished}.  

\begin{rem} \label{weak}
The definition of distinguished diagram in Achar \cite{Acharj} is weaker than ours inasmuch as it requires $Y^j_i - Y^j_{i+1} \geq 1$ rather than $Y^j_{i} - Y^j_{i+1} \geq 2$.  However, Achar's definition of the $E$ map differs slightly from ours, so it does not suffice for our purposes to copy his definition wholesale.  Our definition of distinguished ensures that, if $(X,Y)$ is distinguished, then $Y = EX$ under Achar's definition as well as ours --- so it guarantees that any diagram distinguished by our reckoning is distinguished by Achar's \textit{a fortiori}.  
\end{rem}

To simplify our analysis of the algorithm, we define the \textit{row-partition} function, which is similar to the row-survival function, but does not discriminate between surviving and non-surviving rows.  

\begin{df}
For all $(\alpha, \iota) \in \mathbb{N}^{\ell} \times \mathbb{Z}^{\ell}_{\operatorname{dom}}$, \[\mathcal{P}(\alpha, \iota) \colon \lbrace 1, \ldots, \ell \rbrace \rightarrow \lbrace 1, \ldots, \ell \rbrace \times \lbrace 1, \ldots, \ell \rbrace\] is given by \[\mathcal{P}(\alpha, \iota)(i) :=\big(|\lbrace \iota_{i'} : i' \leq i \rbrace|, |\lbrace i' \leq i : \iota_{i'} = \iota_i \rbrace | \big).\]
\end{df}

\begin{rem}
After the rows of a blank diagram of shape $\alpha$ have been permuted according to $\sigma \in \mathfrak{S}_{\ell}$ and the first column of the permuted diagram is filled in with the entries of $\iota$, the row-survival function $\mathcal{S}(\alpha, \sigma, \iota)$ tells us, for each surviving row, which branch it belongs to, and its position within that branch.  (For each row that does not survive, the row-survival function still records a branch, but records its position within that branch as $0$.)  

If we construe each branch as comprising all rows with a particular first-column entry, not merely the surviving such rows, then, for each row, the row-partition function $\mathcal{P}(\alpha, \iota)$ tells us which branch it belongs to, and its position among all rows within that branch.  
\end{rem}

\begin{df}
Given diagrams $Z^{(1)}, \ldots, Z^{(k)}$ such that $Z^{(x)} \in D_{\ell_x}$ for all $1 \leq x \leq k$, construct $Z \in D_{\ell_1 + \cdots + \ell_k}$ as follows: For all $(i, j) \in \mathbb{N} \times \mathbb{N}$ such that $Z^{(x)}$ has an entry in the $i^{\text{th}}$ row and $j^{\text{th}}$ column, $Z_{i + \sum_{x'=1}^{x-1} \ell_{x'}, j} := Z^{(x)}_{i, j}$.  The \textit{diagram-concatenation} function $\operatorname{Cat} \colon D_{\ell_1} \times \cdots \times D_{\ell_k} \rightarrow D_{\ell_1 + \cdots + \ell_k}$ is given by $(Z^{(1)}, \ldots, Z^{(k)}) \mapsto Z$.  
\end{df}

\begin{rem}
Diagram concatenation is transitive: If $Z^{(x)} = \operatorname{Cat} (Z^{(x, 1)}, \ldots, Z^{(x, \omega_x)})$ for all $x$, then \[Z = \operatorname{Cat}\big(Z^{(1, 1)}, \ldots, Z^{(1, \omega_1)}, \ldots, Z^{(k, 1)}, \ldots, Z^{(k, \omega_k)}\big).\]   
\end{rem}

Let $(\alpha, \nu, \epsilon) \in \mathbb{N}^{\ell} \times \mathbb{Z}^{\ell} \times \lbrace \pm 1 \rbrace$.  Set $\sigma := \mathcal{R}_{\epsilon}(\alpha, \nu)$.  Set $\iota := \mathcal{U}_{\epsilon}(\alpha, \nu, \sigma)$.  For all $(x, i')$ in the image of $\mathcal{P}(\alpha, \iota)$, set $p_{(x,i')} := \mathcal{P}(\alpha, \iota)^{-1}(x, i')$.  

For all $1 \leq x \leq \mathcal{k}$, set \[\ell^{\circ}_x := \max \lbrace i' : (x,i') \in \mathcal{P}(\alpha, \iota) \lbrace 1, \ldots, \ell \rbrace \rbrace. \]

Set \[\mathcal{a}^{(x)} := \left[\alpha_{\sigma^{-1}(p_{(x,1)})}, \ldots, \alpha_{\sigma^{-1}(p_{(x,\ell^{\circ}_x)})}\right] \quad \text{and} \quad \mathcal{n}^{(x)} := \left[\nu_{\sigma^{-1}(p_{(x,1)})}, \ldots, \nu_{\sigma^{-1}(p_{(x,\ell^{\circ}_x)})}\right].\]

For all $1 \leq i' \leq \ell^{\circ}_x$, set \[\hat{\mathcal{n}}^{(x)}_{i'} := \mathcal{n}^{(x)}_{i'} - \sum_{x'=1}^{x-1} \sum_{i_0 = 1}^{\ell^{\circ}_{x'}} \min \left \lbrace \mathcal{a}^{(x)}_{i'}, \mathcal{a}^{(x')}_{i_0} \right \rbrace + \sum_{x'=x+1}^{\mathcal{k}} \sum_{i_0 = 1}^{\ell^{\circ}_{x'}} \min \left \lbrace \mathcal{a}^{(x)}_{i'}, \mathcal{a}^{(x')}_{i_0} \right \rbrace.\]

Then set $\hat{\mathcal{n}}^{(x)} := \left[ \hat{\mathcal{n}}^{(x)}_{1}, \ldots, \hat{\mathcal{n}}^{(x)}_{\ell^{\circ}_x} \right]$ and $\left (\mathcal{X}^{(x)}, \mathcal{Y}^{(x)} \right) := \mathcal{A} \left (\mathcal{a}^{(x)}, \hat{\mathcal{n}}^{(x)}, \epsilon \right)$.  

\begin{lem} \label{collapse}
Set $(X,Y) := \mathcal{A}(\alpha, \nu, \epsilon)$.  Then $Y = \operatorname{Cat}(\mathcal{Y}^{(1)}, \ldots, \mathcal{Y}^{(\mathcal{k})})$.  
\end{lem}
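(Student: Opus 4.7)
The plan is to match the concatenation $\operatorname{Cat}(\mathcal{Y}^{(1)}, \ldots, \mathcal{Y}^{(\mathcal{k})})$ with $Y$ entry by entry. Since row $i'$ of $\mathcal{Y}^{(x)}$ sits at position $p_{(x, i')}$ of the concatenation, it suffices to verify $\mathcal{Y}^{(x)}_{i', j} = Y_{p_{(x, i')}, j}$ for all $x$, $i'$, and $j$. The argument splits into two halves: column $1$ and columns $j \geq 2$. I treat $\epsilon = -1$; the case $\epsilon = 1$ is symmetric with $\mathcal{C}_{-1}, \mathcal{R}_{-1}, \mathcal{U}_{-1}$ replaced by $\mathcal{C}_1, \mathcal{R}_1, \mathcal{U}_1$ and upper clamps by lower clamps.

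For column $1$, I would first observe that the algorithm sets $Y_{p_{(x, i')}, 1} = \iota^{\circ}_x + \ell - 2 p_{(x, i')} + 1$, while $\mathcal{Y}^{(x)}_{i', 1} = \iota^{(x)}_{i'} + \ell^{\circ}_x - 2i' + 1$, where $\iota^{(x)} := \mathcal{U}_{\epsilon}(\mathcal{a}^{(x)}, \hat{\mathcal{n}}^{(x)}, \sigma^{(x)})$ and $\sigma^{(x)} := \mathcal{R}_{\epsilon}(\mathcal{a}^{(x)}, \hat{\mathcal{n}}^{(x)})$. Using $p_{(x, i')} = \sum_{x' < x} \ell^{\circ}_{x'} + i'$, equality reduces to the claim that $\iota^{(x)}$ is the constant sequence with value $c_x := \iota^{\circ}_x - \sum_{x' < x} \ell^{\circ}_{x'} + \sum_{x' > x} \ell^{\circ}_{x'}$. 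The central fact I would prove is a ceiling-decomposition identity: for a row index $j$ in branch $x$, the contributions to $\mathcal{C}_{-1}(\alpha, \nu, j, I_a, I_b)$ from indices outside branch $x$ are exactly the corrections built into $\hat{\mathcal{n}}^{(x)}$, with rows of branches $x' < x$ landing in $I_a$ and rows of branches $x' > x$ in $I_b$. Hence $\mathcal{C}_{-1}(\alpha, \nu, j, I_a, I_b)$ equals $\mathcal{C}_{-1}(\mathcal{a}^{(x)}, \hat{\mathcal{n}}^{(x)}, j^{\flat}, I_a^{\flat}, I_b^{\flat})$, where $j^{\flat}$, $I_a^{\flat}$, $I_b^{\flat}$ record the intra-branch indices. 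Chasing the shift $\ell^{\circ}_x - 2i' + 1$ versus $\ell - 2 p_{(x, i')} + 1$ through this identity shows that the \emph{unclamped} outputs of $\mathcal{U}_{-1}$ on branch $x$ in the original and on $(\mathcal{a}^{(x)}, \hat{\mathcal{n}}^{(x)})$ in the inner computation differ by the constant $c_x - \iota^{\circ}_x$; since the former are clamped down to $\iota^{\circ}_x$ in the original (by the defining property of branch $x$ together with strict descent of the $\iota^{\circ}$ values), the latter are correspondingly clamped to $c_x$, yielding $\iota^{(x)} \equiv c_x$.

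For columns $j \geq 2$, the constancy $\iota^{(x)} \equiv c_x$ forces $\mathcal{S}(\mathcal{a}^{(x)}, \sigma^{(x)}, \iota^{(x)})$ to have a single distinct first coordinate, so the inner recursion of $\mathcal{A}(\mathcal{a}^{(x)}, \hat{\mathcal{n}}^{(x)}, \epsilon)$ spawns a single sub-branch comprising all surviving rows of $\mathcal{a}^{(x)}$. The inner recursive inputs $(\alpha^{(x, 1)}, \hat{\nu}^{(x, 1)})$ coincide with the algorithm's outer inputs $(\alpha^{(x)}, \hat{\nu}^{(x)})$ for branch $x$: the $\alpha$-equality is immediate, and the $\hat{\nu}$-equality follows from the telescoping identity
\[\sum_{i_0 = 1}^{\ell^{\circ}_{x'}} \min\lbrace \mathcal{a}^{(x)}_{i'}, \mathcal{a}^{(x')}_{i_0} \rbrace - \ell^{\circ}_{x'} = \sum_{i_0 = 1}^{\ell_{x'}} \min\lbrace \alpha^{(x)}_{i''}, \alpha^{(x')}_{i_0} \rbrace,\]
proved by partitioning the left sum into surviving ($\mathcal{a}^{(x')}_{i_0} > 1$) and non-surviving rows of $\mathcal{a}^{(x')}$ and applying $\min\lbrace a, b \rbrace = \min\lbrace a - 1, b - 1 \rbrace + 1$ whenever $b \geq 1$. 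Both recursions thus invoke $\mathcal{A}(\alpha^{(x)}, \hat{\nu}^{(x)}, -\epsilon)$ and return identical $Y$-outputs, which are grafted onto column $j + 1$ via Equation~\ref{attachy} in both computations after a routine index match (the surviving sub-index $i''$ inside $\mathcal{Y}^{(x)}$ equals the surviving sub-index inside $Y$ that addresses position $p_{(x, i')}$).

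The main obstacle is the clamping analysis in the column-$1$ case, which demands two subsidiary results: (i) that the original ranking $\sigma$ restricted to branch $x$ aligns with the inner ranking $\sigma^{(x)}$, so that the intra-branch sets $I_a^{\flat}$, $I_b^{\flat}$ match in the two computations---this rests on inter-branch comparisons being governed solely by $\iota^{\circ}$ and thus never interleaving with intra-branch ones, together with compatibility of the tie-breaking rule (numerically minimal $j$) under restriction to a contiguous block of indices; and (ii) a careful tracking of clamp events, exploiting the fact that the first row of each branch in the original is never itself clamped (since the branch values $\iota^{\circ}_1 > \iota^{\circ}_2 > \cdots$ are strictly decreasing), so that $\iota^{(x)}_1$ is set cleanly to $c_x$ and all subsequent clamps in the original force parallel clamps in the sub-computation.
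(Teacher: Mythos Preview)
Your proposal is correct and follows essentially the same route as the paper: establish the ceiling-decomposition identity (the paper's Equation~\eqref{pro}), use it to prove that the inner first column $\iota^{(x)}$ is the constant sequence $c_x$ (the paper's Equation~\eqref{art}), and then match the sub-branch inputs $(\alpha^{(x,1)}, \hat{\nu}^{(x,1)})$ with $(\alpha^{(x)}, \hat{\nu}^{(x)})$ via your telescoping identity, which is exactly the computation the paper carries out in the paragraph beginning ``Since $({\mathcal{a}^{(x)}})'_{i'} = \ldots$''.

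Two remarks. First, the paper streamlines your point~(i) by invoking Proposition~\ref{permute} to assume $\sigma = \operatorname{id}$ at the outset; this makes the tie-breaking compatibility automatic and lets one prove $\mathcal{s}_{(x)} = \operatorname{id}$ directly by induction on $i'$ using only the ceiling identity. Your phrasing that ``inter-branch comparisons are governed solely by $\iota^{\circ}$'' is circular as written, since $\iota$ is computed \emph{after} $\sigma$; the actual argument (which you also gesture at) must go through the ceiling identity itself, and the $\sigma = \operatorname{id}$ reduction is the cleanest way to make it work. Second, your clamping argument for~(ii) is slightly underspecified: the paper handles the inductive step by showing that the \emph{unclamped} value of $\iota^{(x)}_{i'}$ is at least $\iota^{(x)}_{i'-1}$, so the clamp is always active for $i' \geq 2$; it does not merely transfer clamps from the outer computation. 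Your base case (first row never clamped because $\iota^{\circ}_x \neq \iota^{\circ}_{x-1}$) is exactly the paper's.
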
  

\begin{proof}
We show the claim for $\epsilon = -1$ only.  By Proposition~\ref{permute}, we may assume $\sigma = \operatorname{id}$ without loss of generality.  

Fix $1 \leq x \leq \mathcal{k}$.  Set $\mathcal{s}_{(x)} := \mathcal{R}_{-1}(\mathcal{a}^{(x)}, \hat{\mathcal{n}}^{(x)})$ and $\mathcal{m}_{(x)} := \mathcal{U}_{-1}(\mathcal{a}^{(x)}, \hat{\mathcal{n}}^{(x)}, \mathcal{s}_{(x)})$.  We claim that $\mathcal{s}_{(x)} = \operatorname{id}$, which is to say that $\mathcal{s}_{(x)}(i') = i'$ for all $1 \leq i' \leq \ell^{\circ}_x$.  The proof is by induction on $i'$; we show the inductive step.  

Set $J := \lbrace 1, \ldots, i'-1 \rbrace$ and $J' := \lbrace 1, \ldots, \ell^{\circ}_x \rbrace \setminus J$.  For all $I \subset \lbrace 1, \ldots, \ell^{\circ}_x \rbrace$, set $p^{(x)}_I := \lbrace p_{(x, i_0)} : i_0 \in I \rbrace$.  Also set $\mathcal{Q}_{<x} := \lbrace p_{(x', i_0)} : x' < x \rbrace$ and $\mathcal{Q}_{>x} := \lbrace p_{(x', i_0)} : x' > x \rbrace$.  

For all $j \in J'$, 
\begin{align*}
& \mathcal{n}^{(x)}_{j} - \sum_{x'=1}^{x-1} \sum_{i_0 = 1}^{\ell^{\circ}_{x'}} \min \left \lbrace \mathcal{a}^{(x)}_{j}, \mathcal{a}^{(x')}_{i_0} \right \rbrace + \sum_{x'=x+1}^{\mathcal{k}} \sum_{i_0 = 1}^{\ell^{\circ}_{x'}} \min \left \lbrace \mathcal{a}^{(x)}_{j}, \mathcal{a}^{(x')}_{i_0} \right \rbrace \\ & - \sum_{i_0 \in J} \min \left \lbrace \mathcal{a}^{(x)}_{j}, \mathcal{a}^{(x)}_{i_0} \right \rbrace + \sum_{i_0 \in J' \setminus \lbrace j \rbrace} \min \left \lbrace \mathcal{a}^{(x)}_{j}, \mathcal{a}^{(x)}_{i_0} \right \rbrace \\ & = \nu_{p_{(x,j)}} - \sum_{i \in \mathcal{Q}_{<x} \cup p^{(x)}_J} \min \lbrace \alpha_{p_{(x,j)}}, \alpha_i \rbrace + \sum_{i \in \mathcal{Q}_{>x} \cup p^{(x)}_{J' \setminus \lbrace j \rbrace}} \min \lbrace \alpha_{p_{(x,j)}}, \alpha_i \rbrace. 
\end{align*}

Therefore, 
\begin{align} \label{pro}
\mathcal{C}_{-1}\big(\mathcal{a}^{(x)}, \hat{\mathcal{n}}^{(x)}, j, J, J' \setminus \lbrace j \rbrace\big) = \mathcal{C}_{-1}\big(\alpha, \nu, p_{(x,j)}, \mathcal{Q}_{<x} \cup p^{(x)}_J, \mathcal{Q}_{>x} \cup p^{(x)}_{J' \setminus \lbrace j \rbrace}\big).
\end{align}

Note that $\mathcal{Q}_{<x} \cup p^{(x)}_J = \lbrace 1, \ldots, p_{(x,i')} - 1 \rbrace$.  Since $\sigma(p_{(x,i')}) = p_{(x,i')}$, it follows that the lexicographically maximal value of the function \[j \mapsto \left(\mathcal{C}_{-1}\big(\alpha, \nu, p_{(x,j)}, \mathcal{Q}_{<x} \cup p^{(x)}_J, \mathcal{Q}_{>x} \cup p^{(x)}_{J' \setminus \lbrace j \rbrace}\big), \alpha_{p_{(x,j)}}, \nu_{p_{(x,j)}} \right)\] over the domain $J'$ is attained at $j = i'$.  

Thus, the lexicographically maximal value of the function \[j \mapsto \left(\mathcal{C}_{-1}\big(\mathcal{a}^{(x)}, \hat{\mathcal{n}}^{(x)}, j, J, J' \setminus \lbrace j \rbrace\big), \mathcal{a}^{(x)}_{j}, \mathcal{n}^{(x)}_{j} \right)\] over the domain $J'$ is attained at $j = i'$.  

Since $\hat{\mathcal{n}}^{(x)}_j - \mathcal{n}^{(x)}_j$ as a function of $j$ depends only on $\mathcal{a}^{(x)}_j$, the lexicographically maximal value of the function \[j \mapsto \left(\mathcal{C}_{-1}\big(\mathcal{a}^{(x)}, \hat{\mathcal{n}}^{(x)}, j, J, J' \setminus \lbrace j \rbrace\big), \mathcal{a}^{(x)}_{j}, \hat{\mathcal{n}}^{(x)}_{j} \right)\] over the domain $J'$ is also attained at $j = i'$.  

Since $i' \in J'$ is numerically minimal, it follows that $\mathcal{s}^{(x)}(i') = i'$, as desired. 

We next claim that 
\begin{align} \label{art}
X_{p_{(x,i')}, 1} = \mathcal{X}^{(x)}_{i', 1} + \sum_{x'=1}^{x-1} \ell^{\circ}_{x'} - \sum_{x'=x+1}^{\mathcal{k}} \ell^{\circ}_{x'}.
\end{align}

Again the proof is by induction on $i'$.  For all $1 \leq i' \leq \ell^{\circ}_x$, we see from Equation~\ref{pro} that
\begin{align*}
& \mathcal{C}_{-1}(\mathcal{a}^{(x)}, \hat{\mathcal{n}}^{(x)}, i', \lbrace 1, \ldots, i'-1 \rbrace, \lbrace i'+1, \ldots, \ell^{\circ}_x \rbrace) \\ & = \mathcal{C}_{-1}(\alpha, \nu, p_{(x,i')}, \lbrace 1, \ldots, p_{(x,i')} -1 \rbrace, \lbrace p_{(x,i')} + 1, \ldots, \ell \rbrace).
\end{align*}

Denote the value $\iota_{p_{(x,1)}} = \cdots = \iota_{p_{(x, \ell^{\circ}_x)}}$ by $\iota^{\circ}_x$.  
 
Note that \[\iota^{\circ}_x  = \mathcal{C}_{-1}(\alpha, \nu, p_{(x,1)}, \lbrace 1, \ldots, p_{(x,1)} -1 \rbrace, \lbrace p_{(x,1)} + 1, \ldots, \ell \rbrace) - \ell + 2p_{(x,1)} - 1,\] else $\iota^{\circ}_{x} = \iota^{\circ}_{x-1}$, which contradicts the definition of $\mathcal{P}(\alpha, \iota)$.  

Hence
\begin{align*}
\mathcal{X}_{1,1}^{(x)} & = \mathcal{C}_{-1}(\mathcal{a}^{(x)}, \hat{\mathcal{n}}^{(x)}, 1, \varnothing, \lbrace 2, \ldots, \ell^{\circ}_x \rbrace) - \ell^{\circ}_x + 1 \\ &  = \mathcal{C}_{-1}(\alpha, \nu, p_{(x,1)}, \lbrace 1, \ldots, p_{(x,1)} -1 \rbrace, \lbrace p_{(x,1)} + 1, \ldots, \ell \rbrace) - \ell^{\circ}_x + 1 \\ & = \iota^{\circ}_x + \ell - 2(p_{(x,1)} - 1) - \ell^{\circ}_x \\ & = X_{p_{(x,1)}, 1} - \sum_{x'=1}^{x-1} \ell^{\circ}_{x'} + \sum_{x' = x+1}^{\mathcal{k}} \ell^{\circ}_{x'}, 
\end{align*}
which proves the base case.  

For the inductive step, note that \[\iota^{\circ}_x  \leq \mathcal{C}_{-1}(\alpha, \nu, p_{(x,i')}, \lbrace 1, \ldots, p_{(x,i')} -1 \rbrace, \lbrace p_{(x,i')} + 1, \ldots, \ell \rbrace) - \ell + 2p_{(x,i')} - 1.\]

Thus, 
\begin{align*}
& \mathcal{C}_{-1}(\mathcal{a}^{(x)}, \hat{\mathcal{n}}^{(x)}, i', \lbrace 1, \ldots, i' -1 \rbrace, \lbrace i'+1, \ldots, \ell^{\circ}_x \rbrace) - \ell^{\circ}_x + 2i' - 1 \\ & = \mathcal{C}_{-1}(\alpha, \nu, p_{(x,i')}, \lbrace 1, \ldots, p_{(x,i')} -1 \rbrace, \lbrace p_{(x,i')} + 1, \ldots, \ell \rbrace) - \ell^{\circ}_x + 2i' - 1 \\ & \geq \iota^{\circ}_x + \ell - 2(p_{(x,i')} -i') - \ell^{\circ}_x \\ & = X_{p_{(x,i'-1)}, 1} - \sum_{x'=1}^{x-1} \ell^{\circ}_{x'} + \sum_{x' = x+1}^{\mathcal{k}} \ell^{\circ}_{x'} \\ & = \mathcal{X}^{(x)}_{i'-1, 1},
\end{align*}
where the last inequality follows from the inductive hypothesis.  

We conclude that
\begin{align*}
\mathcal{X}^{(x)}_{i', 1} = \mathcal{X}^{(x)}_{i'-1, 1} & = X_{p_{(x,i'-1)}, 1} - \sum_{x'=1}^{x-1} \ell^{\circ}_{x'} + \sum_{x' = x+1}^{\mathcal{k}} \ell^{\circ}_{x'} \\ & = X_{p_{(x,i')}, 1} - \sum_{x'=1}^{x-1} \ell^{\circ}_{x'} + \sum_{x' = x+1}^{\mathcal{k}} \ell^{\circ}_{x'}.
\end{align*}

This establishes Equation~\ref{art}, which implies $Y_{p_{(x,i')}, 1} = \mathcal{Y}^{(x)}_{i', 1}$, proving the result for the first column of $Y$.

We turn now to the successive columns.  By Equation~\ref{art}, \[\mathcal{m}^{(x)}_1 = \cdots = \mathcal{m}^{(x)}_{\ell^{\circ}_x} = \iota^{\circ}_x - \sum_{x'=1}^{x-1} \ell^{\circ}_{x'} + \sum_{x' = x+1}^{\mathcal{k}} \ell^{\circ}_{x'}.\]

Set $\mathcal{f}_x := \mathcal{S}(\mathcal{a}^{(x)}, \operatorname{id}, \mathcal{m}^{(x)})$ and $\mathcal{f} := \mathcal{S}(\alpha, \operatorname{id}, \iota)$.  Suppose that $\ell_x > 0$.  Note that \[p_{(x, \mathcal{f}_x^{-1}(1,i'))} = \mathcal{f}^{-1}(x, i')\] for all $1 \leq i' \leq \ell_x$. 

Set \[({\mathcal{a}^{(x)}})' := \left  [\mathcal{a}^{(x)}_{\mathcal{f}_x^{-1}(1,1)} - 1, \ldots, \mathcal{a}^{(x)}_{\mathcal{f}_x^{-1}(1, \ell_x)} - 1 \right ]\] and \[({\hat{\mathcal{n}}^{(x)}})' := \left [\hat{\mathcal{n}}^{(x)}_{\mathcal{f}_x^{-1}(1,1)} - \mathcal{m}^{(x)}_{\mathcal{f}_x^{-1}(1,1)}, \ldots, \hat{\mathcal{n}}^{(x)}_{\mathcal{f}_x^{-1}(1,\ell_x)} - \mathcal{m}^{(x)}_{\mathcal{f}_x^{-1}(1,\ell_x)}\right].\]

Then set \[\left((\mathcal{X}^{(x)})', (\mathcal{Y}^{(x)})'\right) := \mathcal{A}_{1} \left(({\mathcal{a}^{(x)}})', ({\hat{\mathcal{n}}^{(x)}})' \right).\]

Since \[({\mathcal{a}^{(x)}})'_{i'} = \mathcal{a}^{(x)}_{\mathcal{f}_x^{-1}(1,i')} - 1 = \alpha_{\mathcal{f}^{-1}(x,i')} - 1 = \alpha^{(x)}_{i'}\] and
\begin{align*}
& ({\hat{\mathcal{n}}^{(x)}})'_{i'} = \hat{\mathcal{n}}^{(x)}_{\mathcal{f}_x^{-1}(1,i')} - \mathcal{m}^{(x)}_{\mathcal{f}_x^{-1}(1,i')} \\ & = \nu_{\mathcal{f}^{-1}(x,i')} - \sum_{x'=1}^{x-1} \sum_{i_0 = 1}^{\ell^{\circ}_{x'}} \min \left \lbrace \alpha_{\mathcal{f}^{-1}(x,i')}, \alpha_{p_{(x',i_0)}} \right \rbrace + \sum_{x'=x+1}^{\mathcal{k}} \sum_{i_0 = 1}^{\ell^{\circ}_{x'}} \min \left \lbrace \alpha_{\mathcal{f}^{-1}(x,i')}, \alpha_{p_{(x',i_0)}} \right \rbrace \\ & - \iota^{\circ}_x + \sum_{x'=1}^{x-1} \ell^{\circ}_{x'} - \sum_{x'=x+1}^{\mathcal{k}} \ell^{\circ}_{x'} \\ & = \nu^{(x)}_{i'} - \sum_{x'=1}^{x-1} \sum_{i_0 = 1}^{\ell^{\circ}_{x'}} \min \left \lbrace \alpha_{\mathcal{f}^{-1}(x,i')} - 1, \alpha_{p_{(x',i_0)}} - 1 \right \rbrace + \sum_{x'=x+1}^{\mathcal{k}} \sum_{i_0 = 1}^{\ell^{\circ}_{x'}} \min \left \lbrace \alpha_{\mathcal{f}^{-1}(x,i')} - 1, \alpha_{p_{(x',i_0)}} - 1 \right \rbrace \\ & = \nu^{(x)}_{i'} - \sum_{x'=1}^{x-1} \sum_{i_0 =1}^{\ell_x} \min \left \lbrace \alpha^{(x)}_{i'}, \alpha_{\mathcal{f}^{-1}(x', i_0)} - 1 \right \rbrace + \sum_{x'=x+1}^{\mathcal{k}} \sum_{i_0 = 1}^{\ell_x} \min \left \lbrace \alpha^{(x)}_{i'}, \alpha_{\mathcal{f}^{-1}(x', i_0)} - 1\right \rbrace \\ & = \nu^{(x)}_{i'} - \sum_{x'=1}^{x-1} \sum_{i_0 =1}^{\ell_x} \min \left \lbrace \alpha^{(x)}_{i'}, \alpha^{(x')}_{i_0} \right \rbrace + \sum_{x'=x+1}^{\mathcal{k}} \sum_{i_0 = 1}^{\ell_x} \min \left \lbrace \alpha^{(x)}_{i'}, \alpha^{(x')}_{i_0} \right \rbrace = \hat{\nu}^{(x)}_{i'},
\end{align*}
it follows that $((\mathcal{X}^{(x)})', (\mathcal{Y}^{(x)})') = (X^{(x)}, Y^{(x)})$.  

Thus, $Y_{p_{(x, \mathcal{f}_x^{-1}(1,i'))}, j' + 1} = Y_{\mathcal{f}^{-1}(x,i'), j' + 1} = Y^{(x)}_{i',j'} = (\mathcal{Y}^{(x)})'_{i',j'} = \mathcal{Y}^{(x)}_{\mathcal{f}_x^{-1}(1, i'), j' + 1}$ for all $j' \geq 1$.  The result follows.  
\end{proof}

\begin{thm} \label{differences}
Let $(\alpha, \nu, \epsilon) \in \mathbb{N}^{\ell} \times \mathbb{Z}^{\ell} \times \lbrace \pm 1 \rbrace$.  Set $(X,Y) := \mathcal{A}(\alpha, \nu, \epsilon)$.  Then $Y_{i, j+1} - Y_{i,j} \in \lbrace 0, \epsilon (-1)^{j+1} \rbrace$.  
\end{thm}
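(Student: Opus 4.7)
My plan is to proceed by strong induction on $n = \sum_i \alpha_i$, handling both $\epsilon = \pm 1$ uniformly. In the base case $n = \ell$, every $\alpha_i = 1$, so $Y$ has a single column and the claim is vacuous. For the inductive step, I set $(X, Y) := \mathcal{A}(\alpha, \nu, \epsilon)$; by Proposition~\ref{permute} I may assume $\sigma := \mathcal{R}_\epsilon(\alpha, \nu) = \operatorname{id}$. I then split the argument into the cases $j \geq 2$ and $j = 1$.

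For $j \geq 2$, suppose $\alpha_i > j$, so that row $i$ survives into some branch $x$ at position $i'$ (with $i = i_{(x, i')}$). The attachment rule (Equation~\ref{attachy}) immediately yields
\[Y_{i, j+1} - Y_{i, j} = Y^{(x)}_{i', j} - Y^{(x)}_{i', j-1}.\]
The branch input $(\alpha^{(x)}, \hat{\nu}^{(x)})$ has at most $n - \ell < n$ boxes, so the inductive hypothesis applied to $\mathcal{A}(\alpha^{(x)}, \hat{\nu}^{(x)}, -\epsilon)$ with column index $j - 1 \geq 1$ yields $Y^{(x)}_{i', j} - Y^{(x)}_{i', j-1} \in \lbrace 0, (-\epsilon)(-1)^{j} \rbrace = \lbrace 0, \epsilon (-1)^{j + 1} \rbrace$, exactly as required.

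For $j = 1$, I must show $Y_{i, 2} - Y_{i, 1} \in \lbrace 0, \epsilon \rbrace$ whenever $\alpha_i > 1$. By construction,
\[Y_{i, 1} = \iota_i + \ell - 2i + 1, \qquad Y_{i, 2} = \iota^{(x)}_{\sigma^{(x)}(i')} + \ell_x - 2\sigma^{(x)}(i') + 1,\]
where $\iota := \mathcal{U}_{\epsilon}(\alpha, \nu, \operatorname{id})$, $\sigma^{(x)} := \mathcal{R}_{-\epsilon}(\alpha^{(x)}, \hat{\nu}^{(x)})$, and $\iota^{(x)} := \mathcal{U}_{-\epsilon}(\alpha^{(x)}, \hat{\nu}^{(x)}, \sigma^{(x)})$. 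I plan to substitute the defining expressions for $\iota$ and $\iota^{(x)}$ in terms of $\mathcal{C}_{\epsilon}$ and $\mathcal{C}_{-\epsilon}$, and to exploit the identity $\min\lbrace \alpha_i, \alpha_{i_0} \rbrace = \min\lbrace \alpha_i - 1, \alpha_{i_0} - 1 \rbrace + 1$ for $\alpha_i, \alpha_{i_0} \geq 1$ (with an analogous adjustment for dying rows, whose $\alpha$-entry is $1$) to rewrite the numerators of the branch candidate functions in terms of the numerators of the original ones. The desired containment then reduces to a controlled relation between $\lceil N/\alpha_i \rceil$ and $\lfloor N'/(\alpha_i - 1) \rfloor$ (and its sign-flipped analog) for numerators $N, N'$ whose difference is pinned down.

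The $j = 1$ case is the main obstacle: it is the single step at which the two opposite sign regimes of the algorithm---ceilings paired with top-down monotonicity correction versus floors paired with bottom-up correction---are compared directly. I expect to need a careful case split on whether the monotonicity overrides in $\mathcal{U}_{\epsilon}$ or $\mathcal{U}_{-\epsilon}$ activate at $i$ or at $\sigma^{(x)}(i')$, and to verify in each configuration that the resulting difference lies in $\lbrace 0, \epsilon \rbrace$. Auxiliary lemmas describing how $\mathcal{R}_{-\epsilon}$ orders rows within a branch, together with the identification (via Proposition~\ref{permute}) of the branch input up to permutation, will likely be required to complete the bookkeeping.
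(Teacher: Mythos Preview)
Your inductive framework and the $j \geq 2$ reduction via Equation~\ref{attachy} are correct and match the paper exactly. The gap is in the $j = 1$ case, where you are missing one essential simplification and underestimating a second obstruction.

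The simplification you omit is Lemma~\ref{collapse}: before attacking $Y_{i,2} - Y_{i,1}$, the paper uses that lemma to reduce to the situation where the first column of $X$ is constant, i.e.\ $\iota_1 = \cdots = \iota_\ell$ and hence $\mathcal{k} = 1$. In that situation $\hat{\nu}^{(1)} = \nu^{(1)}$ and all cross-branch correction terms disappear. Your plan instead keeps the general multi-branch picture and hopes the identity $\min\{\alpha_i,\alpha_{i_0}\} = \min\{\alpha_i - 1,\alpha_{i_0} - 1\} + 1$ will let you rewrite the branch numerator in terms of the original one. But the numerator of $\mathcal{C}_{-\epsilon}(\alpha^{(x)},\hat{\nu}^{(x)},\ldots)$ carries the adjustment $\sum_{x'\neq x}\sum_{i_0}\min\{\alpha^{(x)}_{i'},\alpha^{(x')}_{i_0}\}$, and these terms do not cancel against anything in the original $\mathcal{C}_\epsilon(\alpha,\nu,\ldots)$ unless you already know how the other branches are populated. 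The single-branch reduction is what makes the bookkeeping tractable.

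Even after that reduction, the $j=1$ argument is not just a ceiling/floor comparison. The paper splits on whether the row immediately below $i_{i'}$ has length $1$ or not, proves two inequalities (a lower bound (a) and an upper bound (b) on the relevant $\mathcal{C}_1$ values), and---crucially---both inequalities are established under the auxiliary hypothesis that for every $i$ with $\alpha_i = 1$ the set $I_i = \{i_0 : i_{i_0} > i\}$ is $\tau$-stable. Proving that $\tau$-stability is itself a substantial piece of work (it occupies roughly the last third of the paper's proof) and is exactly the kind of ``auxiliary lemma describing how $\mathcal{R}_{-\epsilon}$ orders rows within a branch'' you allude to. Your proposal does not yet contain the idea that the dying rows ($\alpha_i = 1$) partition the surviving rows into $\tau$-invariant blocks, and without it the choice of $\mathcal{i}$ witnessing the upper bound (b) cannot be located.
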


\begin{proof}
The proof is by induction on $\max \lbrace \alpha_1, \ldots, \alpha_{\ell} \rbrace$.  We show the inductive step for $\epsilon = -1$ only.  Thus, we set $(X,Y) := \mathcal{A}(\alpha, \nu, -1)$ and prove $Y_{i, 2} - Y_{i, 1} \in \lbrace 0, -1 \rbrace$.  The rest follows from the inductive hypothesis.  To see this, set $\sigma := \mathcal{R}_{-1}(\alpha, \nu)$ and $\mu := \mathcal{U}_{-1}(\alpha, \nu, \sigma)$.  Then note that \[Y_{i_{(x,i')}, j'+2} - Y_{i_{(x,i')},j'+1} = Y^{(x)}_{i', j'+1} - Y^{(x)}_{i', j'} \in \lbrace 0, (-1)^{j'+1} \rbrace\] for all $(x,i')$ in the image of $\mathcal{S}(\alpha, \sigma, \mu)$ such that $i' > 0$.

By Lemma~\ref{collapse}, we may assume $\mu_1 = \cdots = \mu_{\ell}$.  Denote the common value by $\mu^{\circ}$.  Furthermore, by Proposition~\ref{permute}, we may assume without loss of generality that $\sigma = \operatorname{id}$.  Hence 
\begin{align*}
\mu^{\circ} & = \mathcal{C}_{-1}(\alpha, \nu, 1, \varnothing, \lbrace 2, \ldots, \ell \rbrace) - \ell + 1 \\ & = \left \lceil \frac{\nu_1 + \sum_{i = 2}^{\ell} \min \lbrace \alpha_1, \alpha_i \rbrace}{\alpha_1} \right \rceil - \ell + 1 \\ & = \left \lceil \frac{\nu_1 + \alpha^*_1 + \cdots + \alpha^*_{\alpha_1}}{\alpha_1} \right \rceil - \ell.
\end{align*}

It follows that \[Y_{i,1} = X_{i,1} + \ell - 2i + 1 = \left \lceil \frac{\nu_1 + \alpha^*_1 + \cdots + \alpha^*_{\alpha_1}}{\alpha_1} \right \rceil - 2i + 1.\]

Set $\mathcal{f} := \mathcal{S}(\alpha, \mu, 1)$.  Set $\ell' := \alpha^*_2$.  For all $1 \leq i' \leq \ell'$, set $i_{i'} := \mathcal{f}^{-1}(1, i')$.  Set \[\alpha' := \left [\alpha_{i_1} - 1, \ldots, \alpha_{i_{\ell'}} - 1 \right] \quad \text{and} \quad \nu' := \left [\nu_{i_1} - \mu^{\circ}, \ldots, \nu_{i_{\ell'}} - \mu^{\circ} \right].\]

Set $\tau := \mathcal{R}_1 (\alpha', \nu')$.  Set $\mu' := \mathcal{U}_1(\alpha', \nu', \tau)$.  Additionally, set $(X', Y') := \mathcal{A}(\alpha', \nu', 1)$.  Note that \[Y_{i_{i'},2} = Y'_{i',1} = X'_{i',1} + \ell' - 2i' + 1 = \mu'_{i'} + \ell' - 2i' + 1.\]

We claim that $Y_{i_{i'}, 2} - Y_{i_{i'}, 1} \in \lbrace 0, -1 \rbrace$ for all $1 \leq i' \leq \ell'$.  The proof is by (backwards) induction on $i'$.  For the inductive step, assume that the claim holds for all $i' + 1 \leq i_0 \leq \ell'$.  Set $I_b := \tau^{-1} \lbrace i' + 1, \ldots, \ell' \rbrace$ and $I'_b := \lbrace 1, \ldots, \ell' \rbrace \setminus I_b$.  To show the claim holds for $i'$, we split into two cases.  

\begin{enumerate}
	\item If $i' = \ell'$, or $i' < \ell'$ and $\alpha_{i_{i'} + 1} = 1$, we show: 
	\begin{enumerate}
		\item For all $\mathcal{i} \in I'_b$, \[\mathcal{C}_1 \left (\alpha', \nu', \mathcal{i}, I'_b \setminus \lbrace \mathcal{i} \rbrace, I_b \right) \geq \left \lceil \frac{\nu_1 + \alpha^*_1 + \cdots + \alpha^*_{\alpha_1}}{\alpha_1} \right \rceil - 2i_{i'}.\]
		\item There exists $\mathcal{i} \in I'_b$ such that  \[\mathcal{C}_1 \left (\alpha', \nu', \mathcal{i}, I'_b \setminus \lbrace \mathcal{i} \rbrace, I_b \right) \leq \left \lceil \frac{\nu_1 + \alpha^*_1 + \cdots + \alpha^*_{\alpha_1}}{\alpha_1} \right \rceil - 2i_{i'} + 1.\]
	\end{enumerate}
	\item If $i' < \ell'$ and $\alpha_{i_{i'} + 1} \geq 2$, we show (b) only.  
\end{enumerate}

We first prove that the properties indicated are sufficient to obtain the desired; then we show that they indeed hold.  

\begin{enumerate}
	\item Suppose $i' = \ell'$, or $i' < \ell'$ and $\alpha_{i_{i'}+1} = 1$, and suppose (a) and (b) hold.  We first claim that 
	\begin{align} \label{echo}
	\mu'_{i'} = \mathcal{C}_1(\alpha', \nu', \tau^{-1}(i'), I'_b \setminus \lbrace \tau^{-1}(i') \rbrace, I_b) -\ell' + 2i' - 1.
	\end{align}
	If $i' = \ell'$, the claim follows immediately.  If $i' < \ell'$ and $\alpha_{i_{i'}+1} = 1$, it suffices to show \[\mathcal{C}_1(\alpha', \nu', \tau^{-1}(i'), I'_b \setminus \lbrace \tau^{-1}(i') \rbrace, I_b) -\ell' + 2i' - 1 \geq \mu'_{i'+1}.\]  Applying (a) and the inductive hypothesis, we find
	\begin{align*}
	\mathcal{C}_1(\alpha', \nu', \tau^{-1}(i'), I'_b \setminus \lbrace \tau^{-1}(i') \rbrace, I_b) & \geq \left \lceil \frac{\nu_1 + \alpha^*_1 + \cdots + \alpha^*_{\alpha_1}}{\alpha_1} \right \rceil - 2i_{i'}  \\ & = \mu^{\circ} + \ell - 2i_{i'} \\ & = X_{i_{i' + 1}, 1} + \ell - 2i_{i'} \\ & = Y_{i_{i'+1},1} + 2i_{i'+1} - 2i_{i'}- 1 \\ & \geq Y_{i_{i'+1}, 1} + 3 \\ & \geq Y_{i_{i'+1}, 2} + 3 \\ & = \mu'_{i'+1} + \ell' - 2i' + 2.
	\end{align*}
	From Equation~\ref{echo}, invoking (a) yields
	\begin{align*}
	Y_{i_{i'}, 2} & = \mu'_{i'} + \ell' - 2i'  + 1 \\ & = \mathcal{C}_1(\alpha', \nu', \tau^{-1}(i'), I'_b \setminus \lbrace \tau^{-1}(i') \rbrace, I_b) \\ & \geq \mu^{\circ} + \ell - 2i_{i'} \\ & = X_{i_{i'}, 1} + \ell - 2i_{i'} \\ & = Y_{i_{i'}, 1} - 1.
	\end{align*}
	Since the minimum value of the function given by \[\mathcal{i} \mapsto \mathcal{C}_1(\alpha', \nu', \mathcal{i}, I'_b \setminus \lbrace \mathcal{i} \rbrace, I_b) \] is attained at $\mathcal{i} = \tau^{-1}(i')$, invoking (b) yields
	\begin{align*}
		Y_{i_{i'}, 2} & = \mathcal{C}_1(\alpha', \nu', \tau^{-1}(i'), I'_b \setminus \lbrace \tau^{-1}(i') \rbrace, I_b) \\ & \leq \mu^{\circ} + \ell - 2{i_{i'}} + 1 \\ & = X_{i_{i'}, 1} + \ell -2i_{i'} + 1 \\ & = Y_{i_{i'}, 1}.
	\end{align*}
	It follows that $Y_{i_{i'}, 2} - Y_{i_{i'}, 1} \in \lbrace 0, -1 \rbrace$.  
	\item Suppose $i' < \ell'$ and $\alpha_{i_{i'} + 1} \geq 2$, and suppose (b) holds.  Note that $i_{i'+1} = i_{i'} + 1$.  Thus,
	\begin{align*}
	Y_{i_{i'}, 2} & = \mu'_{i'} + \ell' - 2i' + 1 \\ & \geq \mu'_{i'+1} + \ell' - 2i' + 1 \\ & = Y_{i_{i'+1}, 2} + 2 \\ & = Y_{i_{i'} +1, 2} + 2 \\ & \geq Y_{i_{i'} + 1, 1} + 1 \\ & = X_{i_{i'} + 1, 1} + \ell - 2i_{i'}\\ & = X_{i_{i'}, 1} + \ell - 2i_{i'} \\ & = Y_{i_{i'}, 1} - 1.   
	\end{align*}
	If Equation~\ref{echo} holds, then $Y_{i_{i'}, 2} \leq Y_{i_{i'}, 1}$ follows from invoking (b) as above.  Otherwise, $\mu'_{i'} = \mu'_{i' + 1}$, and 
	\begin{align*}
	Y_{i_{i'}, 2} & = \mu'_{i'} + \ell' - 2i' + 1 \\ & = \mu'_{i'+1} + \ell' - 2i' + 1 \\ & = Y_{i_{i'+1}, 2} + 2 \\ & = Y_{i_{i'} + 1, 2} + 2 \\ & \leq Y_{i_{i'} + 1, 1} + 2 \\ & = X_{i_{i'} +1, 1} + \ell - 2i_{i'} + 1 \\ & = X_{i_{i'}, 1} + \ell - 2i_{i'} + 1 \\ & = Y_{i_{i'}, 1}.
	\end{align*}
\end{enumerate}

Therefore, it suffices to show (i) that (a) holds if $i' = \ell'$, or $i' < \ell'$ and $\alpha_{i_{i'} + 1} = 1$, and (ii) that (b) holds always.  We prove these claims subject to the following assumption: For all $1 \leq i \leq \ell$ such that $\alpha_i = 1$, the set $I_i := \lbrace i_0 \in \lbrace 1, \ldots, \ell' \rbrace : i_{i_0} > i \rbrace$ is preserved under $\tau$.  Finally, we justify the assumption.  

\begin{enumerate}
	\item[(i)] Suppose $i' = \ell'$, or $i' < \ell'$ and $\alpha_{i_{i'} + 1} = 1$.  Assume for the sake of contradiction that there exists $\mathcal{i} \in I'_b$ such that \[\mathcal{C}_1 \left (\alpha', \nu', \mathcal{i}, I'_b \setminus \lbrace \mathcal{i} \rbrace, I_b \right) < \mu^{\circ} + \ell - 2i_{i'}.\]
	
	For all $1 \leq i, j \leq \ell$, set $m_{i,j} := \min \lbrace \alpha_i, \alpha_j \rbrace$.  
	Note that
	\begin{align*}
	\mathcal{C} \left (\alpha', \nu', \mathcal{i}, I'_b \setminus \lbrace \mathcal{i} \rbrace, I_b \right) & = \nu'_{\mathcal{i}} - \sum_{i_0 \in I'_b \setminus \lbrace \mathcal{i} \rbrace} \min \left \lbrace \alpha'_{\mathcal{i}}, \alpha'_{i_0} \right \rbrace + \sum_{i_0 \in I_b} \min \left \lbrace \alpha'_{\mathcal{i}}, \alpha'_{i_0} \right \rbrace \\ & = \nu_{i_{\mathcal{i}}} - \mu^{\circ} - (\alpha^*_2 + \cdots + \alpha^*_{\alpha_{i_{\mathcal{i}}}}) + \alpha_{i_{\mathcal{i}}} - 1 - 2 (\ell' - i') + 2 \sum_{i_0 \in I_b} m_{i_{\mathcal{i}}, i_{i_0}}.
	\end{align*}
	Hence \[\mathcal{C}_1(\alpha', \nu', \mathcal{i}, I'_b \setminus \lbrace \mathcal{i} \rbrace, I_b) = \left \lfloor \frac{ \nu_{i_{\mathcal{i}}} - \mu^{\circ} - (\alpha^*_2 + \cdots + \alpha^*_{\alpha_{i_{\mathcal{i}}}}) - 2(\ell'-i') + 2 \sum_{i_0 \in I_b} m_{i_{\mathcal{i}}, i_{i_0}}}{\alpha_{i_{\mathcal{i}}} - 1} \right \rfloor + 1.\]
	Thus, \[\left \lfloor \frac{ \nu_{i_{\mathcal{i}}} - \mu^{\circ} - (\alpha^*_2 + \cdots + \alpha^*_{\alpha_{i_{\mathcal{i}}}}) - 2(\ell'-i') + 2 \sum_{i_0 \in I_b} m_{i_{\mathcal{i}}, i_{i_0}}}{\alpha_{i_{\mathcal{i}}} - 1} \right \rfloor < \mu^{\circ} + \ell - 2i_{i'} - 1.\]
	Since the right-hand side is an integer, it follows that
	\begin{align*}
	& \frac{ \nu_{i_{\mathcal{i}}} - \mu^{\circ} - (\alpha^*_2 + \cdots + \alpha^*_{\alpha_{i_{\mathcal{i}}}}) - 2(\ell'-i') + 2 \sum_{i_0 \in I_b} m_{i_{\mathcal{i}}, i_{i_0}}}{\alpha_{i_{\mathcal{i}}} - 1} < \mu^{\circ} + \ell - 2i_{i'} - 1 \\ & \Longleftrightarrow \frac{\nu_{i_{\mathcal{i}}} - (\alpha^*_2 + \cdots + \alpha^*_{\alpha_{i_{\mathcal{i}}}}) - 2(\ell'-i') + 2 \sum_{i_0 \in I_b} m_{i_{\mathcal{i}}, i_{i_0}} - (\ell - 2i_{i'} - 1)(\alpha_{i_{\mathcal{i}}} - 1)}{\alpha_{i_{\mathcal{i}}} - 1} \\ & < \mu^{\circ} \left( 1 + \frac{1}{\alpha_{i_{\mathcal{i}}} - 1} \right) \\ & \Longleftrightarrow \frac{\nu_{i_{\mathcal{i}}} - (\alpha^*_2 + \cdots + \alpha^*_{\alpha_{i_{\mathcal{i}}}}) - 2(\ell'-i') + 2 \sum_{i_0 \in I_b} m_{i_{\mathcal{i}}, i_{i_0}} - (\ell - 2i_{i'} - 1)(\alpha_{i_{\mathcal{i}}} - 1)}{\alpha_{i_{\mathcal{i}}}} < \mu^{\circ} \\ & \Longleftrightarrow \frac{\nu_{i_{\mathcal{i}}} - (\alpha^*_1 + \cdots + \alpha^*_{\alpha_{i_{\mathcal{i}}}}) + 2 \sum_{i_0 \in I_b} m_{i_{\mathcal{i}}, i_{i_0}} + 2\ell - 2 \ell' + 2 i' - 2i_{i'} - 1}{\alpha_{i_{\mathcal{i}}}} - \ell + 2i_{i'} + 1 < \mu^{\circ} \\ & \Longleftrightarrow \frac{\nu_{i_{\mathcal{i}}} + (\alpha^*_1 + \cdots + \alpha^*_{\alpha_{i_{\mathcal{i}}}}) - \alpha_{i_{\mathcal{i}}} - 2 \sum_{i_0 = 1}^{i_{\mathcal{i}} - 1} m_{i_{\mathcal{i}}, i_0} - 1}{\alpha_{i_{\mathcal{i}}}} \\ & + \frac{- 2 \sum_{i_0 = i_{\mathcal{i}} + 1}^{\ell} m_{i_{\mathcal{i}}, i_0} + 2\sum_{i_0 \in I_b} m_{i_{\mathcal{i}}, i_{i_0}} + 2\ell - 2\ell' + 2i' - 2i_{i'}}{\alpha_{i_{\mathcal{i}}}} - \ell + 2i_{i'} < \mu^{\circ} \\ & \Longleftrightarrow \frac{\mathcal{C}(\alpha, \nu, i_{\mathcal{i}}, \lbrace 1, \ldots, i_{\mathcal{i}}-1 \rbrace, \lbrace i_{\mathcal{i}} +1, \ldots, \ell \rbrace) -1}{\alpha_{i_{\mathcal{i}}}} \\ & + \frac{- 2 \sum_{i_0 = i_{\mathcal{i}} + 1}^{\ell} m_{i_{\mathcal{i}}, i_0} + 2\sum_{i_0 \in I_b} m_{i_{\mathcal{i}}, i_{i_0}} + 2\ell - 2\ell' + 2i' - 2i_{i'}}{\alpha_{i_{\mathcal{i}}}} + 2i_{i'} - 2i_{\mathcal{i}} < \mu^{\circ} + \ell - 2i_{\mathcal{i}}.  
	\end{align*}
	
	We observe that $I_b = \lbrace i'+1, \ldots, \ell' \rbrace$.  If $i' = \ell'$, this holds vacuously; otherwise, it follows from the assumption indicated above, for $\alpha_{i_{i'} + 1} =1$ implies $\lbrace i'+1, \ldots, \ell' \rbrace = I_{i_{i'} +1}$ is preserved under $\tau$.  Since $\mathcal{i} \in I'_b$, we see also that $\mathcal{i} \leq i'$.  
	
	Thus, 
	\begin{align*}
	& \frac{- 2 \sum_{i_0 = i_{\mathcal{i}} + 1}^{\ell} m_{i_{\mathcal{i}}, i_0} + 2\sum_{i_0 \in I_b} m_{i_{\mathcal{i}}, i_{i_0}} + 2\ell - 2\ell' + 2i' - 2i_{i'}}{\alpha_{i_{\mathcal{i}}}} + 2i_{i'} - 2i_{\mathcal{i}} \\ & = \frac{-2 \sum_{i_0 = i_{\mathcal{i}} + 1}^{i_{i'}} m_{i_{\mathcal{i}}, i_0} - 2 \sum_{i_0 = i_{i'} + 1}^{\ell} m_{i_{\mathcal{i}}, i_0} + 2 \sum_{i_0 = i' +1}^{\ell'} m_{i_{\mathcal{i}}, i_{i_0}} + 2\ell - 2\ell' + 2i' - 2i_{i'}}{\alpha_{i_{\mathcal{i}}}} + 2i_{i'} - 2i_{\mathcal{i}} \\ & = \frac{-2 \sum_{i_0=i_{\mathcal{i}} + 1}^{i_{i'}} m_{i_{\mathcal{i}}, i_0}}{\alpha_{i_{\mathcal{i}}}} + 2i_{i'} - 2i_{\mathcal{i}}  \geq 0.  
	\end{align*}
	
	Furthermore, \[\mu^{\circ} = \mu_{i_{\mathcal{i}}} \leq \mathcal{C}_{-1}(\alpha, \nu, i_{\mathcal{i}}, \lbrace 1, \ldots, i_{\mathcal{i}} - 1 \rbrace, \lbrace i_{\mathcal{i}} + 1, \ldots, \ell \rbrace) - \ell + 2i_{\mathcal{i}} - 1.\]
	
	Set \[\mathcal{c} := \mathcal{C}(\alpha, \nu, i_{\mathcal{i}}, \lbrace 1, \ldots, i_{\mathcal{i}} - 1 \rbrace, \lbrace i_{\mathcal{i}} + 1, \ldots, \ell \rbrace).\]
	
	Then 
	\begin{align*}
	& \frac{\mathcal{c}-1}{\alpha_{i_{\mathcal{i}}}} < \left \lceil \frac{\mathcal{c}}{\alpha_{i_{\mathcal{i}}}} \right \rceil - 1 \\ & \Longleftrightarrow \mathcal{c} -1 < \alpha_{i_{\mathcal{i}}} \left \lceil \frac{\mathcal{c}}{\alpha_{i_{\mathcal{i}}}} \right \rceil - \alpha_{i_{\mathcal{i}}} \\ & \Longleftrightarrow \mathcal{c} \leq \alpha_{i_{\mathcal{i}}} \left \lceil \frac{\mathcal{c}}{\alpha_{i_{\mathcal{i}}}} \right \rceil - \alpha_{i_{\mathcal{i}}} \\ & \Longrightarrow \mathcal{c} < \alpha_{i_{\mathcal{i}}} \left( \frac{\mathcal{c}}{\alpha_{i_{\mathcal{i}}}} + 1 \right) - \alpha_{i_{\mathcal{i}}} = \mathcal{c},
	\end{align*}
	which is a contradiction.  
	
	\item[(ii)] If $\alpha_j > 1$ for all $1 \leq j < i_{\mathcal{i'}}$, set $j_0 := 0$.  Otherwise, let $j_0 < i_{i'}$ be maximal such that $\alpha_{j_0} = 1$.  Analogously, if $\alpha_j > 1$ for all $i_{\mathcal{i'}} < j \leq \ell$, set $j_1 := \ell + 1$.  Otherwise, let $j_1 > i_{i'}$ be minimal such that $\alpha_{j_1} = 1$.  Set $I_c := I_{j_0} \setminus I_{j_1}$.  By assumption, $I_c$ is preserved under $\tau$.  Hence $I_c \cap I'_b \neq \varnothing$, else $I_c \subset I_b$, meaning $I_c \subset \lbrace i' + 1, \ldots, \ell' \rbrace$, which is impossible because $i' \in I_c$.  
	
	Let $\mathcal{i} \in I_c \cap I'_b$ be chosen so that $\alpha_{i_{\mathcal{i}}}$ is minimal.  We claim that \[\mathcal{C}_1 \left (\alpha', \nu', \mathcal{i}, I'_b \setminus \lbrace \mathcal{i} \rbrace, I_b \right) \leq \mu^{\circ} + \ell - 2i_{i'} + 1.\]
	
	Assume for the sake of contradiction that \[\mathcal{C}_1 \left (\alpha', \nu', \mathcal{i}, I'_b \setminus \lbrace \mathcal{i} \rbrace, I_b \right) \geq \mu^{\circ} + \ell - 2i_{i'} + 2.\] 
	
	As above, \[\mathcal{C}_1(\alpha', \nu', \mathcal{i}, I'_b \setminus \lbrace \mathcal{i} \rbrace, I_b) = \left \lfloor \frac{ \nu_{i_{\mathcal{i}}} - \mu^{\circ} - (\alpha^*_2 + \cdots + \alpha^*_{\alpha_{i_{\mathcal{i}}}}) - 2(\ell'-i') + 2 \sum_{i_0 \in I_b} m_{i_{\mathcal{i}}, i_{i_0}}}{\alpha_{i_{\mathcal{i}}} - 1} \right \rfloor + 1.\]
	
	Thus,  
	\begin{align*}
	& \frac{ \nu_{i_{\mathcal{i}}} - \mu^{\circ} - (\alpha^*_2 + \cdots + \alpha^*_{\alpha_{i_{\mathcal{i}}}}) - 2(\ell'-i') + 2 \sum_{i_0 \in I_b} m_{i_{\mathcal{i}}, i_{i_0}}}{\alpha_{i_{\mathcal{i}}} - 1} \geq \mu^{\circ} + \ell - 2i_{\mathcal{i'}} + 1  \\ & \Longleftrightarrow \frac{\nu_{i_{\mathcal{i}}} - (\alpha^*_2 + \cdots + \alpha^*_{\alpha_{i_{\mathcal{i}}}}) - 2(\ell'-i') + 2 \sum_{i_0 \in I_b} m_{i_{\mathcal{i}}, i_{i_0}} - (\ell - 2i_{i'} + 1)(\alpha_{i_{\mathcal{i}}} - 1)}{\alpha_{i_{\mathcal{i}}} - 1} \\ & \geq \mu^{\circ} \left( 1 + \frac{1}{\alpha_{i_{\mathcal{i}}} - 1} \right) \\ & \Longleftrightarrow \frac{\nu_{i_{\mathcal{i}}} - (\alpha^*_2 + \cdots + \alpha^*_{\alpha_{i_{\mathcal{i}}}}) - 2(\ell'-i') + 2 \sum_{i_0 \in I_b} m_{i_{\mathcal{i}}, i_{i_0}} - (\ell - 2i_{i'} + 1)(\alpha_{i_{\mathcal{i}}} - 1)}{\alpha_{i_{\mathcal{i}}}} \geq \mu^{\circ} \\ & \Longleftrightarrow \frac{\nu_{i_{\mathcal{i}}} - (\alpha^*_1 + \cdots + \alpha^*_{\alpha_{i_{\mathcal{i}}}}) + 2 \sum_{i_0 \in I_b} m_{i_{\mathcal{i}}, i_{i_0}} + 2 \ell - 2 \ell' + 2 i' - 2i_{i'}+1}{\alpha_{i_{\mathcal{i}}}} - \ell + 2i_{i'} - 1 \geq \mu^{\circ} \\ & \Longleftrightarrow \frac{\nu_{i_{\mathcal{i}}} + (\alpha^*_1 + \cdots + \alpha^*_{\alpha_{i_{\mathcal{i}}}}) - \alpha_{i_{\mathcal{i}}} - 2 \sum_{i_0 = 1}^{j_0} m_{i_{\mathcal{i}}, i_0} + 1}{\alpha_{i_{\mathcal{i}}}} \\ & + \frac{- 2 \sum_{i_0 = j_0+1}^{\ell} m_{i_{\mathcal{i}}, i_0} + 2\sum_{i_0 \in I_b} m_{i_{\mathcal{i}}, i_{i_0}} + 2\ell - 2\ell' + 2i' - 2i_{i'}}{\alpha_{i_{\mathcal{i}}}} - \ell + 2i_{i'} \geq \mu^{\circ} \\ & \Longleftrightarrow \frac{\nu_{i_{\mathcal{i}}} + (\alpha^*_1 + \cdots + \alpha^*_{\alpha_{i_{\mathcal{i}}}}) - \alpha_{i_{\mathcal{i}}} - 2 \sum_{i_0 = 1}^{j_0} m_{i_{\mathcal{i}}, i_0} + 1}{\alpha_{i_{\mathcal{i}}}} \\ & + \frac{- 2 \sum_{i_0 = j_0 + 1}^{\ell} m_{i_{\mathcal{i}}, i_0} + 2\sum_{i_0 \in I_b} m_{i_{\mathcal{i}}, i_{i_0}} + 2\ell - 2\ell' + 2i' - 2i_{i'}}{\alpha_{i_{\mathcal{i}}}} + 2i_{i'} - 2j_0 \geq \mu^{\circ} + \ell - 2 j_0.
	\end{align*}
	
	From the inclusions $I_{j_1} \subset \lbrace i'+1, \ldots, \ell' \rbrace \subset I_{j_0}$, we see that $I_{j_1} \subset I_b \subset I_{j_0}$, so $I_{j_1} = I_b \setminus I_c$.  Furthermore, $|I_c \cap I'_b| = |I_c \cap \lbrace 1, \ldots, i' \rbrace| = i_{i'} - j_0$.  Thus,
	\begin{align*}
	& \frac{- 2 \sum_{i_0 = j_0 + 1}^{\ell} m_{i_{\mathcal{i}}, i_0} + 2\sum_{i_0 \in I_b} m_{i_{\mathcal{i}}, i_{i_0}} + 2\ell - 2\ell' + 2i' - 2i_{i'}}{\alpha_{i_{\mathcal{i}}}} + 2i_{i'} - 2j_0 \\ & =  \frac{- 2 \sum_{i_0 = j_0 + 1}^{j_1 - 1} m_{i_{\mathcal{i}}, i_0}  + 2\sum_{i_0 \in I_c \cap I_b} m_{i_{\mathcal{i}}, i_{i_0}} }{\alpha_{i_{\mathcal{i}}}} + 2i_{i'} - 2j_0 \\ & + \frac{- 2 \sum_{i_0 = j_1}^{\ell} m_{i_{\mathcal{i}}, i_0} + 2 \sum_{i_0 \in I_{j_1}} m_{i_{\mathcal{i}}, i_{i_0}} + 2\ell - 2\ell' + 2i' - 2i_{i'}}{\alpha_{i_{\mathcal{i}}}} \\ & = \frac{- 2\sum_{i_0 \in I_c \cap I'_b} m_{i_{\mathcal{i}}, i_{i_0}}}{\alpha_{i_{\mathcal{i}}}} + 2i_{i'} - 2j_0 \\ & + \frac{-2 \sum_{i_0 = i_{i'}}^{\ell} m_{i_{\mathcal{i}}, i_0} + 2 \sum_{i_0 = i'}^{\ell'} m_{i_{\mathcal{i}}, i_{i_0}} + 2\ell - 2\ell' + 2i' - 2i_{i'}}{\alpha_{i_{\mathcal{i}}}} \\ & = 0 + 0 = 0.
	\end{align*}
	
	Hence \[ \frac{\nu_{i_{\mathcal{i}}} + (\alpha^*_1 + \cdots + \alpha^*_{\alpha_{i_{\mathcal{i}}}}) - \alpha_{i_{\mathcal{i}}} - 2 \sum_{i_0 = 1}^{j_0} m_{i_{\mathcal{i}}, i_0} + 1}{\alpha_{i_{\mathcal{i}}}} \geq \mu^{\circ} + \ell - 2j_0.\]
	
	If $j_0 = 0$, then 
	\begin{align*}
	\frac{\mathcal{C}(\alpha, \nu, i_{\mathcal{i}}, \varnothing, \lbrace 1, \ldots, \ell \rbrace \setminus \lbrace i_{\mathcal{i}} \rbrace) + 1}{\alpha_{i_{\mathcal{i}}}} & = \frac{\nu_{i_{\mathcal{i}}} + (\alpha^*_1 + \cdots + \alpha^*_{\alpha_{i_{\mathcal{i}}}}) - \alpha_{i_{\mathcal{i}}} + 1}{\alpha_{i_{\mathcal{i}}}} \\ & \geq \mu^{\circ} + \ell \\ & = \mathcal{C}_{-1}(\alpha, \nu, 1, \varnothing, \lbrace 2, \ldots, \ell \rbrace) + 1 \\ & \geq \mathcal{C}_{-1}(\alpha, \nu, i_{\mathcal{i}}, \varnothing, \lbrace 1, \ldots, \ell \rbrace \setminus \lbrace i_{\mathcal{i}} \rbrace) + 1 \\ & \geq \frac{\mathcal{C}(\alpha, \nu, i_{\mathcal{i}}, \varnothing, \lbrace 1, \ldots, \ell \rbrace \setminus \lbrace i_{\mathcal{i}} \rbrace)}{\alpha_{i_{\mathcal{i}}}} + 1,
	\end{align*}
	which is impossible because $\alpha_{i_{\mathcal{i}}} > 1$.  
	
	Thus, $j_0 \geq 1$.  From Proposition~\ref{multi}, it follows that $\nu_{j_0} = \mu^{\circ}$.  Hence \[\mathcal{C}_{-1}(\alpha, \nu, j_0, \lbrace 1, \ldots, j_0 - 1 \rbrace, \lbrace j_0 + 1, \ldots, \ell \rbrace) = \mu^{\circ} + \ell - 2j_0 + 1.\]
		
	Since $j_0 < i_{\mathcal{i}}$ and $\alpha_{j_0} < \alpha_{i_{\mathcal{i}}}$, it follows that \[\mathcal{C}_{-1}(\alpha, \nu, j_0, \lbrace 1, \ldots, j_0 - 1 \rbrace, \lbrace j_0 + 1, \ldots, \ell \rbrace) > \mathcal{C}_{-1}(\alpha, \nu, i_{\mathcal{i}}, \lbrace 1, \ldots, j_0 - 1 \rbrace, \lbrace j_0, \ldots, \ell \rbrace \setminus \lbrace i_{\mathcal{i}} \rbrace).\]
	
	Therefore, \[\mu^{\circ} + \ell - 2j_0 \geq \mathcal{C}_{-1}(\alpha, \nu, i_{\mathcal{i}}, \lbrace 1, \ldots, j_0 - 1 \rbrace, \lbrace j_0, \ldots, \ell \rbrace \setminus \lbrace i_{\mathcal{i}} \rbrace).\]
	
	Then
	\begin{align*}
	& \frac{\mathcal{C}(\alpha, \nu, i_{\mathcal{i}}, \lbrace 1, \ldots, j_0 - 1 \rbrace, \lbrace j_0, \ldots, \ell \rbrace \setminus \lbrace i_{\mathcal{i}} \rbrace) - 1}{\alpha_{i_{\mathcal{i}}}} \\ & = \frac{\nu_{i_{\mathcal{i}}} + (\alpha^*_1 + \cdots + \alpha^*_{\alpha_{i_{\mathcal{i}}}}) - \alpha_{i_{\mathcal{i}}} - 2 \sum_{i_0 = 1}^{j_0} m_{i_{\mathcal{i}}, i_0} + 1}{\alpha_{i_{\mathcal{i}}}} \\ & \geq \mu^{\circ} + \ell - 2j_0 \\ & \geq \mathcal{C}_{-1}(\alpha, \nu, i_{\mathcal{i}}, \lbrace 1, \ldots, j_0 - 1 \rbrace, \lbrace j_0, \ldots, \ell \rbrace \setminus \lbrace i_{\mathcal{i}} \rbrace) \\ & \geq \frac{\mathcal{C}(\alpha, \nu, i_{\mathcal{i}}, \lbrace 1, \ldots, j_0 - 1 \rbrace, \lbrace j_0, \ldots, \ell \rbrace \setminus \lbrace i_{\mathcal{i}} \rbrace)}{\alpha_{i_{\mathcal{i}}}},
	\end{align*}
	which is a contradiction.  
	\end{enumerate}
	
	It remains to justify the assumption that $I_i$ is preserved under $\tau$ for all $1 \leq i \leq \ell$ such that $\alpha_i = 1$.  Given a subset $J \subset \lbrace 1, \ldots, \ell' \rbrace$, set $i_J := \lbrace i_j : j \in J \rbrace$.  Given a subset $I \subset \lbrace 1, \ldots, \ell \rbrace$, let $m(I)$ be its minimal element, and let $M(I)$ be its maximal element.  Say that $I$ is \textit{consecutive} if $M(I) - m(I) + 1 = |I|$.  Partition $\lbrace 1, \ldots, \ell' \rbrace$ into disjoint blocks $J_1, \ldots, J_k$ such that $i_{J_r}$ is consecutive for all $1 \leq r \leq k$ and $m(i_{J_{r+1}}) - M(i_{J_r}) > 1$ for all $1 \leq r \leq k-1$.  
	
	We claim that $J_r$ is preserved under $\tau$ for all $1 \leq r \leq k$.  The proof is by (backwards) induction on $r$.  For the inductive step, suppose the claim holds for all $r + 1 \leq r_0 \leq k$.  Let $c$ be the cardinality of $J_r$, and let $j^r_1, \ldots, j^r_c$ be the elements of $J_r$, arranged in increasing order.  
	
	The claim for $r$ is then that $\tau^{-1}(j^r_{w}) \in J_r$ for all $1 \leq w \leq c$.  If $r=1$, this follows immediately from the inductive hypothesis, so we may assume $r \geq 1$.  Set $q := i_{j^r_1} - 1$.  Then $\alpha_q = 1$ and $i_{j^r_w} = q + w$.  We prove the claim by (backwards) induction on $w$.  
	
	Suppose $\tau^{-1}(j^r_{w_0}) \in J_r$ for all $w + 1 \leq w_0 \leq c$.  Set $j_0 := \tau^{-1}(j^r_{w})$.  Assume for the sake of contradiction that $j_0 \notin J_r$.  By the inductive hypothesis (on $r$), we see that $j_0 \in J_1 \cup \cdots \cup J_{r-1}$.  Thus, $i_{j_0} < q$.  
	
	Note that 
	\begin{align*}
	\mu^{\circ} = \mu_{i_{j_0}} & \leq \mathcal{C}_{-1}(\alpha, \nu, i_{j_0}, \lbrace 1, \ldots, i_{j_0} - 1 \rbrace, \lbrace i_{j_0} + 1, \ldots, \ell \rbrace) - \ell + 2i_{j_0} - 1 \\ & = \left \lceil \frac{\nu_{i_{j_0}} - \sum_{i_0=1}^{i_{j_0}-1} m_{i_{j_0}, i_0} + \sum_{i_0 = i_{j_0} + 1}^{q-1} m_{i_{j_0}, i_0} + 1 + \sum_{i_0 = q+1}^{\ell} m_{i_{j_0}, i_0}}{\alpha_{i_{j_0}}} \right \rceil - \ell + 2i_{j_0} - 1 \\ & \leq \left \lceil \frac{\nu_{i_{j_0}} - \sum_{i_0=1}^{i_{j_0}-1} m_{i_{j_0}, i_0} - \sum_{i_0 = i_{j_0} + 1}^{q-1} m_{i_{j_0}, i_0} + 1 + \sum_{i_0 = q+1}^{\ell} m_{i_{j_0}, i_0}}{\alpha_{i_{j_0}}}\right \rceil - \ell + 2q - 3 \\ & = \left \lceil \frac{\nu_{i_{j_0}} - \sum_{i_0=1}^{q-1} m_{i_{j_0}, i_0} + 1 + \sum_{i_0 = q+1}^{\ell} m_{i_{j_0}, i_0}}{\alpha_{i_{j_0}}}\right \rceil - \ell + 2q - 2.
	\end{align*}
	
	Thus, 
	\begin{align}
	& \left \lfloor \frac{\nu_{i_{j_0}} - \mu^{\circ} - \sum_{i_0 = 1}^{q-1} m_{i_{j_0}, i_0} + \sum_{i_0 = q+1}^{\ell} m_{i_{j_0}, i_0} - \ell + 2q - 1}{\alpha_{i_{j_0}} - 1}\right \rfloor \\ & \geq \left \lfloor \frac{\nu_{i_{j_0}} - \sum_{i_0 = 1}^{q-1} m_{i_{j_0}, i_0} + 1 + \sum_{i_0 = q+1}^{\ell} m_{i_{j_0}, i_0} - \left \lceil \frac{\nu_{i_{j_0}} - \sum_{i_0=1}^{q-1} m_{i_{j_0}, i_0} + 1 + \sum_{i_0 = q+1}^{\ell} m_{i_{j_0}, i_0}}{\alpha_{i_{j_0}}}\right \rceil}{\alpha_{i_{j_0}} -1} \right \rfloor \\ & = \left \lfloor \frac{\left \lfloor \left(\nu_{i_{j_0}} - \sum_{i_0=1}^{q-1} m_{i_{j_0}, i_0} + 1 + \sum_{i_0 = q+1}^{\ell} m_{i_{j_0}, i_0} \right) \left( \frac{\alpha_{i_{j_0}} - 1}{\alpha_{i_{j_0}}} \right)\right \rfloor}{\alpha_{i_{j_0}} - 1}\right \rfloor \\ & = \left \lfloor \frac{\nu_{i_{j_0}} - \sum_{i_0=1}^{q-1} m_{i_{j_0}, i_0} + 1 + \sum_{i_0 = q+1}^{\ell} m_{i_{j_0}, i_0}}{\alpha_{i_{j_0}}}\right \rfloor \\ & \geq \left \lceil \frac{\nu_{i_{j_0}} - \sum_{i_0=1}^{q-1} m_{i_{j_0}, i_0} + 1 + \sum_{i_0 = q+1}^{\ell} m_{i_{j_0}, i_0}}{\alpha_{i_{j_0}}}\right \rceil - 1 \\ & \geq \mu^{\circ} + \ell - 2q + 1.
	\end{align}
	
	Set \[J_r^w := \left \lbrace w_0 \in \lbrace 1, \ldots, c \rbrace : j^r_{w_0} \in J_r \setminus \tau^{-1} \lbrace j^r_{w+1}, \ldots, j^r_{c} \rbrace \right \rbrace.\]  Let $w' \in J_r^w$ be chosen so that $\alpha_{i_{j^r_{w'}}}$ is minimal.
	
	Since $q < q + w'$ and $\alpha_q = 1 < \alpha_{q+w'}$, it follows that
	\begin{align*}
	&\mathcal{C}_{-1}(\alpha, \nu, q + w', \lbrace 1, \ldots, q-1 \rbrace, \lbrace q, \ldots, \ell \rbrace \setminus \lbrace q+w' \rbrace) \\ & < \mathcal{C}_{-1}(\alpha, \nu, q, \lbrace 1, \ldots, q-1 \rbrace, \lbrace q+1, \ldots, \ell \rbrace) \\ & = \nu_q + \ell - 2q + 1 \\ & = \mu^{\circ} + \ell - 2q + 1,
	\end{align*}
	where the last equality follows from Proposition~\ref{multi}.  
	
	Hence
	\begin{align*}
	\mu^{\circ} + \ell - 2q + 1 & \geq \mathcal{C}_{-1}(\alpha, \nu, q + w', \lbrace 1, \ldots, q-1 \rbrace, \lbrace q, \ldots, \ell \rbrace \setminus \lbrace q+w' \rbrace) + 1 \\ & = \left \lceil \frac{\nu_{q+w'} - \sum_{i_0=1}^{q-1} m_{q+w', i_0} + 1 + \sum_{i_0 = q+1}^{\ell} m_{q+w', i_0}}{\alpha_{q + w'}} \right \rceil.  
	\end{align*}
	
	Therefore, 
	\begin{align}
	& \left \lfloor \frac{\nu_{q+w'} - \mu^{\circ} - \sum_{i_0 = 1}^{q-1} m_{q+w', i_0} + \sum_{i_0 = q+1}^{\ell} m_{q+w', i_0} - \ell + 2q - 1}{\alpha_{q+w'}-1} \right \rfloor \\ & \leq \left \lfloor \frac{\nu_{q+w'} - \sum_{i_0 = 1}^{q-1} m_{q+w', i_0} + \sum_{i_0 = q+1}^{\ell} m_{q+w', i_0} - \left \lceil \frac{\nu_{q+w'} - \sum_{i_0=1}^{q-1} m_{q+w', i_0} + 1 + \sum_{i_0 = q+1}^{\ell} m_{q+w', i_0}}{\alpha_{q + w'}} \right \rceil}{\alpha_{q+w'}-1} \right \rfloor \\ & \leq \left \lfloor \frac{\nu_{q+w'} - \sum_{i_0 = 1}^{q-1} m_{q+w', i_0} + 1 + \sum_{i_0 = q+1}^{\ell} m_{q+w', i_0} - \left \lceil \frac{\nu_{q+w'} - \sum_{i_0=1}^{q-1} m_{q+w', i_0} + 1 + \sum_{i_0 = q+1}^{\ell} m_{q+w', i_0}}{\alpha_{q + w'}} \right \rceil}{\alpha_{q+w'}-1} \right \rfloor \\ & = \left \lfloor \frac{\left \lfloor \left(\nu_{q+w'} - \sum_{i_0=1}^{q-1} m_{q+w', i_0} + 1 + \sum_{i_0 = q+1}^{\ell} m_{q+w', i_0} \right) \left(\frac{\alpha_{q+w'}-1}{\alpha_{q+w'}} \right) \right \rfloor}{\alpha_{q+w'}-1} \right \rfloor \\ & = \left \lfloor \frac{\nu_{q+w'} - \sum_{i_0=1}^{q-1} m_{q+w', i_0} + 1 + \sum_{i_0 = q+1}^{\ell} m_{q+w', i_0}}{\alpha_{q + w'}} \right \rfloor \\ & \leq \left \lceil \frac{\nu_{q+w'} - \sum_{i_0=1}^{q-1} m_{q+w', i_0} + 1 + \sum_{i_0 = q+1}^{\ell} m_{q+w', i_0}}{\alpha_{q + w'}} \right \rceil \\ & \leq \mu^{\circ} + \ell - 2q + 1.
	\end{align}
	
	Combining (5.4) -- (5.9) and (5.10) -- (5.16), we see that $(5.4) \geq (5.10)$, with equality if and only if all the inequalities are in fact equalities.  However, if $(5.14) = (5.15)$, then \[z:= \frac{\nu_{q+w'} - \sum_{i_0=1}^{q-1} m_{q+w', i_0} + 1 + \sum_{i_0 = q+1}^{\ell} m_{q+w', i_0}}{\alpha_{q + w'}} \in \mathbb{Z},\] in which case \[\frac{\nu_{q+w'} - \sum_{i_0=1}^{q-1} m_{q+w', i_0} + 1 + \sum_{i_0 = q+1}^{\ell} m_{q+w', i_0}-z}{\alpha_{q + w'} -1} = z \in \mathbb{Z},\] so $(5.12) = z$ and $(5.11) = \left \lfloor z - \frac{1}{\alpha_{q+w'} - 1} \right \rfloor = z-1$.  It follows that $(5.4) > (5.10)$. 
	
	For all $1 \leq i, j \leq \ell$, set $m'_{i,j} := m_{i,j} - 1$.  Note that $m'_{i,j} = 0$ unless $\alpha_i, \alpha_j > 1$.  Additionally, \[\frac{-2\sum_{w_0 \in J_r^w} m'_{i_{j_0}, q+w_0}}{\alpha_{i_{j_0}} - 1} \geq -2w = \frac{-2\sum_{w_0 \in J_r^w} m'_{q+w', q+w_0}}{\alpha_{q+w'} - 1}.\]  
	
	Thus,
	\begin{align*}
	& \mathcal{C}_1(\alpha', \nu', j_0, (J_1 \cup \cdots \cup J_r) \setminus \tau^{-1} \lbrace j^r_{w}, \ldots, j^r_c \rbrace, \tau^{-1} \lbrace j^r_{w+1}, \ldots, j^r_c \rbrace \cup J_{r+1} \cup \cdots \cup J_k) \\ & = \left \lfloor \frac{\nu'_{j_0} - \sum_{i_0 = 1}^{q-1} m'_{i_{j_0}, i_0} - \sum_{w_0 \in J_r^w} m'_{i_{j_0}, q + w_0} + \sum_{w_0 \in \lbrace 1, \ldots, c \rbrace \setminus J_r^w} m'_{i_{j_0}, q + w_0} + \sum_{i_0 = q + c + 1}^{\ell} m'_{i_{j_0}, i_0}}{\alpha_{i_{j_0}} -1} \right \rfloor + 1 \\ & = \left \lfloor \frac{\nu_{i_{j_0}} - \mu^{\circ} - \sum_{i_0 = 1}^{q-1} m'_{i_{j_0}, i_0} + \sum_{i_0 = q + 1}^{\ell} m'_{i_{j_0}, i_0}}{\alpha_{i_{j_0}} -1} + \frac{-2\sum_{w_0 \in J_r^w} m'_{i_{j_0}, q + w_0}}{\alpha_{i_{j_0}} - 1} \right \rfloor + 1 \\ & \geq \left \lfloor \frac{\nu_{i_{j_0}} - \mu^{\circ} - \sum_{i_0 = 1}^{q-1} m_{i_{j_0}, i_0} + \sum_{i_0 = q + 1}^{\ell} m_{i_{j_0}, i_0} - \ell + 2q - 1}{\alpha_{i_{j_0}} -1} \right \rfloor - 2w + 1 \\ & > \left \lfloor \frac{\nu_{q+w'} - \mu^{\circ} - \sum_{i_0 = 1}^{q-1} m_{q+w', i_0} + \sum_{i_0 = q+1}^{\ell} m_{q+w', i_0} - \ell + 2q - 1}{\alpha_{q+w'}-1} \right \rfloor - 2w + 1 \\ & = \left \lfloor \frac{\nu_{q+w'} - \mu^{\circ} - \sum_{i_0 = 1}^{q-1} m'_{q+w', i_0} + \sum_{i_0 = q+1}^{\ell} m'_{q+w', i_0}}{\alpha_{q+w'}-1} + \frac{-2 \sum_{w_0 \in J_r^w} m'_{q+w', q+ w_0}}{\alpha_{q+w'}-1} \right \rfloor + 1 \\ & = \left \lfloor \frac{\nu'_{j^r_{w'}} - \sum_{i_0=1}^{q-1} m'_{i_{j^r_{w'}}, i_0} - \sum_{w_0 \in J_r^w} m'_{i_{j^r_{w'}}, q + w_0} + \sum_{w_0 \in \lbrace 1, \ldots, c \rbrace \setminus J_r^w} m'_{i_{j^r_{w'}}, q + w_0} + \sum_{i_0 = q+c+1}^{\ell} m'_{i_{j^r_{w'}}, i_0}}{\alpha_{q+w'}-1} \right \rfloor + 1 \\ & = \mathcal{C}_1(\alpha', \nu', j^r_{w'}, (J_1 \cup \cdots \cup J_r) \setminus (\lbrace j^r_{w'} \rbrace \cup \tau^{-1} \lbrace j^r_{w+1}, \ldots, j^r_{c} \rbrace), \tau^{-1} \lbrace j^r_{w+1}, \ldots, j^r_c \rbrace \cup J_{r+1} \cup \cdots \cup J_k).  
	\end{align*}
	
	Set $J := \tau^{-1} \lbrace j^r_{w+1}, \ldots, j^r_c \rbrace \cup J_{r+1} \cup \cdots \cup J_k$ and $J' := \lbrace 1, \ldots, \ell' \rbrace \setminus J$.  By the inductive hypothesis, \[J = \tau^{-1} \lbrace j \in \lbrace 1, \ldots, \ell' \rbrace : j > j^r_w \rbrace.\]  
	
	From our work above, we see that \[\mathcal{C}_1(\alpha', \nu', j_0, J' \setminus \lbrace j_0 \rbrace, J) > \mathcal{C}_1(\alpha', \nu', j^r_{w'}, J' \setminus \lbrace j^r_{w'} \rbrace, J),\] which means that the function given by \[j \mapsto \mathcal{C}_1(\alpha', \nu', j, J' \setminus \lbrace j \rbrace, J)\] does not attain its minimal value over the domain $j \in J'$ at $j = j_0 = \tau^{-1}(j^r_w)$.  This contradicts the definition of $\tau$.  
\end{proof}

\begin{df}
Given a diagram $X$ and a positive integer $j$, the diagram $\mathcal{T}_j(X)$ is obtained from $X$ by removing the leftmost $j-1$ columns of $X$, and then removing the empty rows from the remaining diagram.  
\end{df}

\begin{rem}
We refer to $\mathcal{T}_j$ as the \textit{column-reduction} function.  Inductively, we see that $\mathcal{T}_j \mathcal{T}_{j'}(X) = \mathcal{T}_{j+j'-1}(X)$ for all $j, j' \in \mathbb{N}$.  
\end{rem}

\begin{lem} \label{bigentry}
Let $(\alpha, \nu) \in \mathbb{N}^{\ell} \times \mathbb{Z}^{\ell}$.  Set $(X,Y) := \mathcal{A}(\alpha, \nu, -1)$.  Suppose $X_{1,1} = \cdots = X_{\ell, 1}$.  Then $Y_{1,1} \geq Y_{i,j}$ for all $(i, j) \in \mathbb{N} \times \mathbb{N}$ such that $Y$ has an entry in the $i^{\text{th}}$ row and $j^{\text{th}}$ column.  
\end{lem}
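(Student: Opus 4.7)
The proof proceeds by induction on $s := \alpha_1$, the length of the longest row. In the base case $s = 1$, the diagram $Y$ consists of a single column and $Y_{i,1} = X_{i,1} + \ell - 2i + 1$; the hypothesis $X_{1,1} = \cdots = X_{\ell,1}$ makes this strictly decreasing in $i$, so the maximum is $Y_{1,1}$.

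For the inductive step, I would first use Proposition~\ref{permute} to assume $\sigma := \mathcal{R}_{-1}(\alpha, \nu) = \operatorname{id}$, so that $\iota := \mathcal{U}_{-1}(\alpha, \nu, \operatorname{id})$ coincides with the first column of $X$. Write $\iota^{\circ} := \iota_1 = \cdots = \iota_\ell$ for the common value. Columns $1$ and $2$ pose no difficulty: directly, $Y_{i,1} = \iota^{\circ} + \ell - 2i + 1 \leq \iota^{\circ} + \ell - 1 = Y_{1,1}$, and by Theorem~\ref{differences} applied to $\epsilon = -1$, $j = 1$ we have $Y_{i,2} \in \{Y_{i,1}, Y_{i,1} - 1\}$, whence $Y_{i,2} \leq Y_{1,1}$. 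The hypothesis $\iota_1 = \cdots = \iota_\ell$ forces $\mathcal{k} = 1$ (only one branch), so no inter-branch adjustment is needed and $\hat{\nu}^{(1)} = \nu^{(1)}$; the entries of $Y$ in columns $j \geq 3$ are given by $Y_{i_{(1, i')}, j + 1} = Y^{(1)}_{i', j}$, where $(X^{(1)}, Y^{(1)}) := \mathcal{A}(\alpha^{(1)}, \nu^{(1)}, +1)$.

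The central obstacle is that the inductive hypothesis concerns $\mathcal{A}(\cdot, \cdot, -1)$, but the sub-recursion uses $\epsilon = +1$, so the hypothesis does not apply verbatim to $(X^{(1)}, Y^{(1)})$. To overcome this, I would strengthen the inductive assertion to encompass both parities: if $(X, Y) = \mathcal{A}(\alpha, \nu, +1)$ and $X_{1,1} = \cdots = X_{\ell,1}$, then the \emph{minimum} entry of $Y$ is $Y_{\ell,1}$ (the dual claim, with $\mathcal{R}_{+1}$, $\mathcal{U}_{+1}$, and the $\{0, +1\}$-valued column differences from Theorem~\ref{differences} in the analogous roles). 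These two claims would then be proved by a joint induction that respects the parity flip built into $\mathcal{A}$'s recursive structure, invoking the dual at each parity change.

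The hardest part will be ruling out the pathological scenario $Y_{1,2} = Y_{1,1}$ together with $Y_{1,3} = Y_{1,2} + 1$, which the a priori bounds from Theorem~\ref{differences} do permit and which would give $Y_{1,3} > Y_{1,1}$. To preclude this, one exploits the triggered ``else'' clause of $\mathcal{U}_{-1}$: under the hypothesis $\iota_1 = \cdots = \iota_\ell$, the inequality
\[\mathcal{C}_{-1}(\alpha, \nu, i, \{1, \ldots, i - 1\}, \{i + 1, \ldots, \ell\}) > \iota^{\circ} + \ell - 2i + 1\]
must hold at every $i \geq 2$. This structural inequality propagates through the sub-recursion's first-column computation via the candidate-floor formulas $\mathcal{C}_1$ and constrains $X^{(1)}_{1,1}$ (hence $Y^{(1)}_{1,1}$) with enough slack to absorb the potential $+1$ accumulations in successive columns. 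Coordinating the dual claims across recursion depths while carefully bookkeeping these structural inequalities is the combinatorial heart of the proof.
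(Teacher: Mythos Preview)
Your setup is sound: the induction on the longest row length, the reduction to $\sigma=\operatorname{id}$, the treatment of columns $1$ and $2$ via the first-column formula together with Theorem~\ref{differences}, and the identification of the dangerous pattern $Y_{1,2}=Y_{1,1}$, $Y_{1,3}=Y_{1,2}+1$ all match the paper.  The proposed mechanism for handling columns $j\ge 3$, however, does not work as written.

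You suggest strengthening to a joint statement at both parities and ``invoking the dual at each parity change.''  There are two obstructions.  First, the dual claim (for $\epsilon=+1$) asserts that $Y_{\ell,1}$ is the \emph{minimum} entry; applied to $(X^{(1)},Y^{(1)})=\mathcal{A}(\alpha',\nu',+1)$ it would give a lower bound on the entries of $Y^{(1)}$, which is the wrong direction for bounding them above by $Y_{1,1}$.  Second, and more fundamentally, neither the lemma nor its dual applies to $(X^{(1)},Y^{(1)})$ anyway: the first column of $X^{(1)}$ is $\mu'=\mathcal{U}_{1}(\alpha',\nu',\tau)$, which is \emph{not} constant in general (indeed $\mathcal{k}'>1$ is typical).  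So the constant-first-column hypothesis fails one level down, and no amount of parity bookkeeping repairs this.

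The paper's route is to descend \emph{two} recursion levels, so that the parity returns to $-1$: write $\mathcal{T}_3(Y)=\mathcal{T}_2(Y')=\operatorname{Cat}\big({Y'}^{(1)},\ldots,{Y'}^{(\mathcal{k}')}\big)$ with each $({X'}^{(x)},{Y'}^{(x)})=\mathcal{A}\big({\alpha'}^{(x)},\widehat{\nu'}^{(x)},-1\big)$.  One first checks, using Theorem~\ref{differences} twice, that the first-column entries of $\mathcal{T}_3(Y)$ are weakly decreasing, so the maximum of $\mathcal{T}_3(Y)$ is controlled by ${Y'}^{(1)}_{1,1}$ after the inductive hypothesis is applied to the depth-two pieces.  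The whole problem then collapses to the single inequality $Y_{1,1}\ge {Y'}^{(1)}_{1,1}$.  For this the paper does \emph{not} use your ``else-clause'' inequality (which, incidentally, should read $\ge$ rather than $>$, since the non-else branch can also produce $\iota_i=\iota^{\circ}$).  Instead it computes ${Y'}^{(1)}_{1,1}$ explicitly, shows that ${Y'}^{(1)}_{1,1}>Y_{1,1}$ forces $p=1$, $Y_{1,2}=Y_{1,1}$ and hence $\mu^{\circ}+\ell=\mu'_1+\ell'$, and then unwinds the resulting formula to the inequality
\[
\mathcal{C}_{-1}\big(\alpha,\nu,i_{i'_0},\varnothing,\{1,\ldots,\ell\}\setminus\{i_{i'_0}\}\big)\;>\;\mathcal{C}_{-1}\big(\alpha,\nu,1,\varnothing,\{2,\ldots,\ell\}\big),
\]
which contradicts the defining maximality of $\sigma^{-1}(1)=1$ in $\mathcal{R}_{-1}$.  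That maximality at the \emph{original} level, not a propagated else-clause inequality, is the lever that closes the argument.
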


\begin{proof}
The proof is by induction on $M := \max \lbrace \alpha_1, \ldots, \alpha_{\ell} \rbrace$.  Clearly, $Y_{1, 1} \geq Y_{i, 1}$ for all $i$, and it follows from Theorem~\ref{differences} that $Y_{i, 1} \geq Y_{i, 2}$ for all $i$, so $Y_{1,1} \geq Y_{i,2}$.  Thus, we may assume $M \geq 3$.  

Maintain the notation from the proof of Theorem~\ref{differences} (continue to assume without loss of generality that $\sigma = \operatorname{id}$).  Set $\mathcal{f}' := \mathcal{S}(\alpha', \tau, \mu')$.  For all $(x, i_0)$ in the image of $\mathcal{f}'$ such that $i_0 > 0$, set $i'_{(x,i_0)} := {\mathcal{f}'}^{-1}(x,i_0)$.  Also set $\mathcal{k}' := |\lbrace \mu'_1, \ldots, \mu'_{\ell'} \rbrace|$.  For all $1 \leq x \leq \mathcal{k}'$, set \[\ell'_x := \max \lbrace i_0 : (x, i_0) \in \mathcal{f}' \lbrace 1, \ldots, \ell' \rbrace \rbrace.\]

If $\ell'_x > 0$, then set \[{\alpha'}^{(x)} := \left [\alpha'_{{\tau}^{-1}\left(i'_{(x, 1)}\right)} - 1, \ldots, \alpha'_{{\tau}^{-1}\left(i'_{(x, \ell'_x)}\right)} - 1 \right] \] and \[{\nu'}^{(x)} = \left [\nu'_{{\tau}^{-1}\left(i'_{(x, 1)}\right)} - \mu'_{i'_{(x, 1)}}, \ldots, \nu'_{{\tau}^{-1}\left(i'_{(x, \ell'_x)}\right)} - \mu'_{i'_{(x, \ell'_x)}} \right].\]  

For all $1 \leq i_0 \leq \ell'_x$, set \[\widehat{\nu'}^{(x)}_{i_0} := {\nu'}^{(x)}_{i_0} - \sum_{x' = 1}^{x-1} \sum_{i_1 = 1}^{\ell'_{x'}} \min \left\lbrace {\alpha'}^{(x)}_{i_0}, {\alpha'}^{(x')}_{i_1} \right\rbrace + \sum_{x' = x+1}^{\mathcal{k}'} \sum_{i_1 = 1}^{\ell'_{x'}} \min \left\lbrace {\alpha'}^{(x)}_{i_0}, {\alpha'}^{(x')}_{i_1} \right\rbrace.\]
Then set $\widehat{\nu'}^{(x)} := \left[\widehat{\nu'}^{(x)}_1, \ldots, \widehat{\nu'}^{(x)}_{\ell'_x}\right]$ and $\left({X'}^{(x)}, {Y'}^{(x)} \right) := \mathcal{A}\left({\alpha'}^{(x)}, \widehat{\nu'}^{(x)}, -1 \right)$.

By construction $\mathcal{T}_2(Y) = Y'$ and \[\mathcal{T}_3(Y) = \mathcal{T}_2 (Y') = \operatorname{Cat}\left({Y'}^{(1)}, \ldots, {Y'}^{(\mathcal{k}')}\right).\]

Note that $\mathcal{T}_2(Y)_{i, 1} \geq \mathcal{T}_2(Y)_{i+1, 1} + 1$.  To see this, suppose the $i^{\text{th}}$ row of $\mathcal{T}_2(Y)$ is contained within the $j^{\text{th}}$ row of $Y$, and the $(i+1)^{\text{th}}$ row of $\mathcal{T}_2(Y)$ is contained within the $k^{\text{th}}$ row of $Y$.  Then $k - j \geq 1$, so $Y_{j, 1} \geq Y_{k, 1} + 2$, and it follows from Theorem~\ref{differences} that $Y_{j, 2} \geq Y_{k, 2} + 1$.  

Applying exactly the same reasoning, we find $\mathcal{T}_2(Y')_{i, 1} \geq \mathcal{T}_2(Y')_{i+1, 1}$, so the entries of $\mathcal{T}_3(Y) = \mathcal{T}_2 (Y')$ down the first column are weakly decreasing.  In particular, ${Y'}^{(1)}_{1,1} \geq {Y'}^{(x)}_{1, 1}$ for all $1 \leq x \leq \mathcal{k}'$.  By the inductive hypothesis, we see that ${Y'}^{(1)}_{1,1} \geq \mathcal{T}_3(Y)_{i,j}$ for all $(i, j)$.  

Thus, it suffices to show $Y_{1,1} \geq {Y'}^{(1)}_{1,1}$.  Assume for the sake of contradiction that ${Y'}^{(1)}_{1,1} > Y_{1,1}$.  Set $\phi := \mathcal{R}_{-1}({\alpha'}^{(1)}, \widehat{\nu'}^{(1)})$.  Set $i_0 := \phi^{-1}(1)$.  Also set $i'_0 := \tau^{-1}\left(i'_{(1,i_0)}\right)$.  Note that
\begin{align*}
{Y'}^{(1)}_{1,1} & = {X'}^{(1)}_{1,1} + \ell'_1 - 1 \\ & = \mathcal{C}_{-1}({\alpha'}^{(1)}, \widehat{\nu'}^{(1)}, i_0, \varnothing, \lbrace 1, \ldots, \ell'_1 \rbrace \setminus \lbrace i_0 \rbrace) \\ & = \left \lceil \frac{\widehat{\nu'}^{(1)}_{i_0} + \sum_{i_1 = 1}^{\ell'_1} \min \left \lbrace {\alpha'}^{(1)}_{i_0}, {\alpha'}^{(1)}_{i_1} \right \rbrace}{{\alpha'}^{(1)}_{i_0}} \right \rceil - 1 \\ & = \left \lceil \frac{{\nu'}^{(1)}_{i_0} + \sum_{i_1 = 1}^{\ell'_1} \min \left \lbrace {\alpha'}^{(1)}_{i_0}, {\alpha'}^{(1)}_{i_1} \right \rbrace + \sum_{x' = 2}^{\mathcal{k}'} \sum_{i'=1}^{\ell'_{x'}} \min \left \lbrace {\alpha'}^{(1)}_{i_0},{\alpha'}^{(x')}_{i_1} \right \rbrace}{{\alpha'}^{(1)}_{i_0}} \right \rceil - 1 \\ & = \left \lceil \frac{\nu'_{i'_0} - \mu'_{i'_{(1,i_0)}} + \sum_{x = 1}^{\mathcal{k}'} \sum_{i'=1}^{\ell'_{x}} \min \left \lbrace {\alpha'}_{i'_0} - 1, {\alpha'}_{\tau^{-1}(i'_{(x, i_1)})} - 1 \right \rbrace}{{\alpha'}_{i'_0} - 1} \right \rceil - 1 \\ & = \left \lceil \frac{\nu'_{i'_0} - \mu'_{i'_{(1,i_0)}} + \sum_{i' = 1}^{\ell'} \min \left \lbrace {\alpha'}_{i'_0} - 1, {\alpha'}_{i'} - 1 \right \rbrace}{{\alpha'}_{i'_0} - 1} \right \rceil - 1 \\ & = \left \lceil \frac{\nu'_{i'_0} - \mu'_{i'_{(1,i_0)}} + \left({\alpha'}^*_1 + \cdots + {\alpha'}^*_{{\alpha'}_{i'_0}} \right) - \ell'}{{\alpha'}_{i'_0} - 1} \right \rceil - 1 \\ & = \left \lceil \frac{\nu_{i_{i'_0}} - \mu^{\circ} - \mu'_{i'_{(1,i_0)}} + \left({\alpha}^*_1 + \cdots + {\alpha}^*_{{\alpha}_{i_{i'_0}}} \right) - \ell - \ell'}{{\alpha}_{i_{i'_0}} - 2} \right \rceil - 1.
\end{align*}  

Suppose that the topmost row of $\mathcal{T}_3(Y)$ is contained within the $p^{\text{th}}$ row of $Y$.  If $p > 1$, then \[\mathcal{T}_3(Y)_{1,1} = Y_{p, 3} \leq Y_{p, 2} + 1 \leq Y_{p, 1} + 1 \leq Y_{1,1} - 1.\]  It follows that $p = 1$, so there are at least three boxes in the first row of $Y$.  Thus, the first row of $Y'$ is contained within the first row of $Y$, and, furthermore, there are at least two boxes in the first row of $Y'$.  Hence $\alpha'_{\tau^{-1}(1)} > 1$, whence $\mathcal{f}'(1) = (1, 1)$.  

Since $\mathcal{T}_3(Y)_{1,1} = Y_{1, 3}$ and $Y_{1, 3} \leq Y_{1, 2} + 1 \leq Y_{1, 1} + 1$, the assumption $Y_{1,3} > Y_{1,1}$ entails $Y_{1,2} = Y_{1,1}$.  Note that $Y_{1,1} = \mu^{\circ} + \ell - 1$ and $Y_{1, 2} = Y'_{1,1} = \mu'_1 + \ell' - 1$, so $\mu^{\circ} + \ell = \mu'_1 + \ell'$.  Then $\mu^{\circ} + \ell = \mu'_{i'_{(1,i_0)}} + \ell'$ because $\mu'_{i'_{(1,i_0)}} = \mu'_{i'_{(1,1)}} = \mu'_1$.

Thus, \[{Y'}^{(1)}_{1,1} = \left \lceil \frac{\nu_{i_{i'_0}} + \left({\alpha}^*_1 + \cdots + {\alpha}^*_{{\alpha}_{i_{i'_0}}} \right) - 2(\mu^{\circ} + \ell)}{{\alpha}_{i_{i'_0}} - 2} \right \rceil - 1.\]

From ${Y'}^{(1)}_{1,1} > Y_{1,1} = \mu^{\circ} + \ell - 1$, we obtain
\begin{align*}
& \left \lceil \frac{\nu_{i_{i'_0}} + \left({\alpha}^*_1 + \cdots + {\alpha}^*_{{\alpha}_{i_{i'_0}}} \right) - 2(\mu^{\circ} + \ell)}{{\alpha}_{i_{i'_0}} - 2} \right \rceil > \mu^{\circ} + \ell \\ & \Longleftrightarrow \frac{\nu_{i_{i'_0}} + \left({\alpha}^*_1 + \cdots + {\alpha}^*_{{\alpha}_{i_{i'_0}}} \right) - 2(\mu^{\circ} + \ell)}{{\alpha}_{i_{i'_0}} - 2} > \mu^{\circ} + \ell \\ & \Longleftrightarrow \frac{\nu_{i_{i'_0}} + \left({\alpha}^*_1 + \cdots + {\alpha}^*_{{\alpha}_{i_{i'_0}}} \right)}{{\alpha}_{i_{i'_0}} - 2} > (\mu^{\circ} + \ell) \left(1 + \frac{2}{\alpha_{i_{i'_0}} - 2} \right) \\ & \Longleftrightarrow \frac{\nu_{i_{i'_0}} + \left({\alpha}^*_1 + \cdots + {\alpha}^*_{{\alpha}_{i_{i'_0}}} \right)}{{\alpha}_{i_{i'_0}}} > \mu^{\circ} + \ell \\ & \Longleftrightarrow \left \lceil \frac{\nu_{i_{i'_0}} + \left({\alpha}^*_1 + \cdots + {\alpha}^*_{{\alpha}_{i_{i'_0}}} \right)}{{\alpha}_{i_{i'_0}}} \right \rceil - 1 > \mu^{\circ} + \ell - 1 \\ & \Longleftrightarrow \mathcal{C}_{-1}(\alpha, \nu, i_{i'_0}, \varnothing, \lbrace 1, \ldots, \ell \rbrace \setminus \lbrace i_{i'_0} \rbrace) > \mathcal{C}_{-1}(\alpha, \nu, 1, \varnothing, \lbrace 2, \ldots, \ell \rbrace). 
\end{align*} 

However, the function given by \[i \mapsto \mathcal{C}_{-1}(\alpha, \nu, i, \varnothing, \lbrace 1, \ldots, \ell \rbrace \setminus \lbrace i \rbrace)\] attains its maximal value over the domain $i \in \lbrace 1, \ldots, \ell \rbrace$ at $i = \sigma^{-1}(1) = 1$. 

Therefore, \[\mathcal{C}_{-1}(\alpha, \nu, i_{i'_0}, \varnothing, \lbrace 1, \ldots, \ell \rbrace \setminus \lbrace i_{i'_0} \rbrace) \leq \mathcal{C}_{-1}(\alpha, \nu, 1, \varnothing, \lbrace 2, \ldots, \ell \rbrace).\]  This is a contradiction.  
\end{proof}

\begin{lem} \label{smallentry}
Let $(\alpha, \nu) \in \mathbb{N}^{\ell} \times \mathbb{Z}^{\ell}$.  Set $(X,Y) := \mathcal{A}(\alpha, \nu, 1)$.  Suppose $X_{1,1} = \cdots = X_{\ell, 1}$.  Then $Y_{\ell,1} \leq Y_{i,j}$ for all $(i, j) \in \mathbb{N} \times \mathbb{N}$ such that $Y$ has an entry in the $i^{\text{th}}$ row and $j^{\text{th}}$ column.  
\end{lem}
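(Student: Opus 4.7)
The plan is to mirror the proof of Lemma~\ref{bigentry}, replacing $\mathcal{R}_{-1}$, $\mathcal{U}_{-1}$, and $\mathcal{C}_{-1}$ by their $1$-counterparts, ceilings by floors, and the distinguished top row/branch by the bottom row/branch. The induction is again on $M := \max\lbrace \alpha_1, \ldots, \alpha_\ell \rbrace$. Under the hypothesis $X_{1,1} = \cdots = X_{\ell,1}$, the identity $Y_{i,1} = X_{i,1} + \ell - 2i + 1$ shows the first column of $Y$ is strictly decreasing, so $Y_{\ell,1}$ minimizes it; Theorem~\ref{differences} with $\epsilon = 1$ gives $Y_{i,2} - Y_{i,1} \in \lbrace 0, 1 \rbrace$, whence $Y_{i,2} \geq Y_{i,1} \geq Y_{\ell,1}$. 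This disposes of columns $j = 1, 2$ and reduces the problem to columns $j \geq 3$.

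Write $(X', Y') := \mathcal{A}(\alpha', \nu', -1)$ for the next level of recursion and $(X'^{(x)}, Y'^{(x)}) := \mathcal{A}(\alpha'^{(x)}, \widehat{\nu'}^{(x)}, 1)$ for the ensuing branches. Then $\mathcal{T}_3(Y) = \mathcal{T}_2(Y') = \operatorname{Cat}(Y'^{(1)}, \ldots, Y'^{(\mathcal{k}')})$. Theorem~\ref{differences} applied to $Y'$ (with $\epsilon = -1$), via the row-comparison argument of Lemma~\ref{bigentry}, shows $\mathcal{T}_2(Y')_{i,1} \geq \mathcal{T}_2(Y')_{i+1,1}$, so the first column of $\mathcal{T}_3(Y)$ weakly decreases downward. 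The inductive hypothesis applied to each $\epsilon = 1$ branch output $Y'^{(x)}$ bounds it below by $Y'^{(x)}_{\ell'_x, 1}$; combined with the downward decrease of the first column, this identifies $Y'^{(\mathcal{k}')}_{\ell'_{\mathcal{k}'}, 1}$ as the overall minimum of $\mathcal{T}_3(Y)$.

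The critical step --- and, I expect, the main obstacle --- is establishing $Y_{\ell, 1} \leq Y'^{(\mathcal{k}')}_{\ell'_{\mathcal{k}'}, 1}$. I would argue by contradiction. Reducing to $\sigma = \operatorname{id}$ via Proposition~\ref{permute} and letting $\mu^\circ$ denote the common first-column value of $X$, the expression for $Y'^{(\mathcal{k}')}_{\ell'_{\mathcal{k}'}, 1}$ unfolds, following the derivation in Lemma~\ref{bigentry} with $\lfloor\cdot\rfloor$ and $\mathcal{C}_1$ in place of $\lceil\cdot\rceil$ and $\mathcal{C}_{-1}$, into a floor of the form $\lfloor \cdots/(\alpha_{i_\star} - 2)\rfloor + 1$, where $i_\star$ indexes the bottom row of the bottom branch. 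The strict inequality $Y'^{(\mathcal{k}')}_{\ell'_{\mathcal{k}'}, 1} < Y_{\ell, 1}$ forces $Y_{\ell,2} = Y_{\ell,1}$ via Theorem~\ref{differences} with $\epsilon = 1$, whence $\mu^\circ + \ell' = \mu'_{\ell'} + \ell$. Algebraic manipulations dual to those in Lemma~\ref{bigentry} --- with $\lfloor \cdot \rfloor + 1$ replacing $\lceil \cdot \rceil - 1$, together with an integrality-based strictness conversion at the analogous juncture --- then produce
\[ \mathcal{C}_1(\alpha, \nu, i_\star, \lbrace 1, \ldots, \ell \rbrace \setminus \lbrace i_\star \rbrace, \varnothing) < \mathcal{C}_1(\alpha, \nu, \ell, \lbrace 1, \ldots, \ell - 1 \rbrace, \varnothing), \]
which contradicts the defining property of $\mathcal{R}_1$ placing $\sigma^{-1}(\ell) = \ell$ precisely because $\ell$ yields the lexicographically minimal candidate-floor triple. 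The subtlest point, as in Lemma~\ref{bigentry}, is the passage between floor inequalities and raw quotients: the flip in the rounding direction means the strictness-conversion step cannot be quoted verbatim and must be reworked to account for when quotients are integral.
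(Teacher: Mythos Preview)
Your proposal is correct and takes essentially the same approach as the paper, which simply states that the proof is analogous to that of Lemma~\ref{bigentry}. You have accurately identified all the necessary dualizations (floors for ceilings, $\mathcal{R}_1/\mathcal{U}_1/\mathcal{C}_1$ for their $-1$ counterparts, bottom row/branch for top), and the crucial contradiction with the defining minimality property of $\mathcal{R}_1$ at $\sigma^{-1}(\ell)=\ell$ is exactly the mirror of the maximality contradiction in Lemma~\ref{bigentry}.
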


\begin{proof}
The proof is analogous to that of Lemma~\ref{bigentry}.  
\end{proof}

\begin{thm} \label{puttog}
Let $(\alpha, \nu) \in \mathbb{N}^{\ell} \times \mathbb{Z}^{\ell}$.  Then $\mathcal{A}(\alpha, \nu, -1)$ is odd-distinguished, and $\mathcal{A}(\alpha, \nu, 1)$ is even-distinguished.  
\end{thm}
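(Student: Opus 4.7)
The plan is to induct simultaneously on $M := \max\{\alpha_1, \ldots, \alpha_\ell\}$ for both parities $\epsilon = \pm 1$, which matches the fact that $\mathcal{A}(\alpha, \nu, \epsilon)$ recurses with $-\epsilon$ on strictly smaller $M$. The base case $M = 1$ is one-column: conditions (1)--(3) are vacuous, and the $E$-shift $-\ell + 2i - 1$ (resp.\ $+\ell - 2i + 1$) baked into $\mathcal{U}_{\epsilon}$ delivers condition (4). I only write the inductive step for $\epsilon = -1$; the $\epsilon = +1$ case is symmetric, with Lemmas~\ref{bigentry} and~\ref{smallentry} swapped and with the roles of \emph{raisable} and \emph{lowerable} exchanged.

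For the inductive step, I first collapse to a constant-first-column setup using Lemma~\ref{collapse}. Writing $Y = \operatorname{Cat}(\mathcal{Y}^{(1)}, \ldots, \mathcal{Y}^{(\mathcal{k})})$, the within-row conditions (1)--(3) separate across branches, and condition (4) separates because the branch first-column values $\iota^\circ_1 > \iota^\circ_2 > \cdots$ are strictly decreasing and their $E$-shifts pull apart by at least $2$. Hence I may assume the first column of $X$ is constant with $X_{i,1} = \mu^\circ$, so $Y_{i,1} = \mu^\circ + \ell - 2i + 1$ and $Y^1_i - Y^1_{i+1} = 2$; the surviving rows feed into one branch whose output $(\mathcal{X}^{(1)}, \mathcal{Y}^{(1)}) = \mathcal{A}(\alpha^{(1)}, \hat{\nu}^{(1)}, +1)$ is even-distinguished by the inductive hypothesis (since $\max \alpha^{(1)} = M - 1$).

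Conditions (1) and (4) are then immediate: (1) is Theorem~\ref{differences}, while (4) holds with equality in column 1 and is inherited from the branch in columns $\geq 2$ (condition (4) is parity-blind). For conditions (2) and (3) restricted to columns $j, j' \geq 2$, reindexing $j \mapsto j - 1$ converts them verbatim into the even-distinguished (2) and (3) on $(\mathcal{X}^{(1)}, \mathcal{Y}^{(1)})$, which hold by induction. The new work concerns (2)(a), (3)(a), and (3)(b) with $j = 1$. For $i > 1$ the exact differences $Y^1_{i-1} - Y^1_i = 2$ and $Y^1_i - Y^1_{i+1} = 2$ render $Y^1_i$ non-$E$-raisable and non-$E$-lowerable, discharging the implications vacuously. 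For $i = 1$ in (2)(a) and (3)(a), Lemma~\ref{bigentry} makes $Y_{1,1}$ the global maximum of $Y$, directly contradicting the hypotheses $Y_{1,1} \leq Y_{1,j'} - 1$ and $Y_{1,1} \leq Y_{1,j'} - 2$. For $i = \ell$ in (3)(b), it suffices to show $Y_{\ell, j'} \geq Y_{\ell, 1} - 1$ for all $j' \geq 2$; Theorem~\ref{differences} gives $Y_{\ell, 2} \geq Y_{\ell, 1} - 1$, and Lemma~\ref{smallentry} applied inductively to the (even-distinguished) branch places $Y_{\ell, 2} = \mathcal{Y}^{(1)}_{\ell_1, 1}$ at the minimum of $\mathcal{Y}^{(1)}$, whence $Y_{\ell, j'} \geq Y_{\ell, 2} \geq Y_{\ell, 1} - 1$.

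The main obstacle is the careful bookkeeping of parity as the recursion descends: each step toggles $\epsilon$, and conditions (2) and (3) treat the two parities asymmetrically, so the inductive invariants must align under a column shift of $1$. A secondary subtlety is that Lemmas~\ref{bigentry} and~\ref{smallentry} are tailored to the constant-first-column regime and are of opposite parities; invoking both within a single inductive step requires having already reduced to the constant-first-column case via Lemma~\ref{collapse} and having the branch output satisfy the opposite-parity hypothesis, both of which the induction supplies precisely.
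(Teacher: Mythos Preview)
Your overall architecture (induct on $M$, reduce via Lemma~\ref{collapse}, then treat the constant-first-column case using the even-distinguished branch) is sound in spirit and close to the paper's proof, but the reduction step contains a genuine gap. You write that ``condition (4) separates because the branch first-column values $\iota^{\circ}_1 > \iota^{\circ}_2 > \cdots$ are strictly decreasing and their $E$-shifts pull apart by at least $2$.'' That argument handles only the first column. Condition (4) reads $Y^j_i - Y^j_{i+1} \geq 2$ for \emph{every} column $j$, and when consecutive column-$j$ entries lie in different branches $\mathcal{Y}^{(x)}$ and $\mathcal{Y}^{(x')}$ with $j \geq 2$, nothing you have said bounds their difference. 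Since each $\mathcal{Y}^{(x)}$ is an instance of $\mathcal{A}(\cdot,\cdot,-1)$ with the \emph{same} value of $M$, you cannot invoke the inductive hypothesis on it either. Closing this gap requires exactly the work the paper does for condition (4): one must pass to the survival branches $Y^{(x)}$ (which do have smaller $M$), decompose each $Y^{(x)}$ once more via Lemma~\ref{collapse} into constant-first-column pieces, apply Lemma~\ref{smallentry} to those pieces to show $Y^{(x)}_{i,j} \geq Y^{(x)}_{\ell_x,1}$, and then chain inequalities through the first column. So the ``reduction'' does not actually bypass the hard part of the proof.

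There is a second, smaller issue in your constant-first-column analysis of (3)(b) at $i = \ell$. You invoke Lemma~\ref{smallentry} on the branch to place $Y_{\ell,2}$ at the minimum of the branch diagram, but Lemma~\ref{smallentry} has the hypothesis $X_{1,1} = \cdots = X_{\ell,1}$, which the branch output $\mathcal{A}(\alpha^{(1)},\hat{\nu}^{(1)},+1)$ need not satisfy. Your phrase ``applied inductively'' presumably means decomposing the branch again via Lemma~\ref{collapse} and applying Lemma~\ref{smallentry} to each piece; that does work (the first-column entries of the branch are strictly decreasing, so the bottom-left entry of the last piece is the global minimum), but it should be said explicitly. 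The paper sidesteps both issues by not reducing to the constant-first-column case at all: it keeps the general $\mathcal{k}$, uses the row-partition pieces $\mathcal{Y}^{(x)}$ (constant first column, so Lemma~\ref{bigentry} applies) together with the row-survival pieces $Y^{(x)}$ (smaller $M$, so the inductive hypothesis applies), and handles the cross-branch column-$j$ gaps directly.
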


\begin{proof}
The proof is by induction on $\max \lbrace \alpha_1, \ldots, \alpha_{\ell} \rbrace$.  We show the inductive step for the former statement only.  

Maintain the notation following the definitions of the row-survival and row-partition functions.  Recall that \[Y = \operatorname{Cat}(\mathcal{Y}^{(1)}, \ldots, \mathcal{Y}^{(\mathcal{k})}) \quad \text{and} \quad \mathcal{T}_2(Y) = \operatorname{Cat}(Y^{(1)}, \ldots, Y^{(\mathcal{k})}).\]  By the inductive hypothesis, $Y^{(x)}$ is even-distinguished for all $1 \leq x \leq \mathcal{k}$.  To see that $Y$ is odd-distinguished, we prove that $Y$ satisfies the four conditions delineated in Definition~\ref{dis}.

\begin{enumerate}
	\item This follows immediately from Theorem~\ref{differences}.  
	\item \begin{enumerate}
		\item Suppose $j < j'$ are odd and $Y_{i,j} \leq Y_{i, j'} - 1$.  We split into two cases.  
		
		If $j > 1$, note that there are at least two boxes in the $i^{\text{th}}$ row of $Y$.  Setting $(x, i') := \mathcal{S}(\alpha, \sigma, \iota)$, we obtain $i' > 0$, so $i = i_{(x,i')}$.  Thus, \[Y^{(x)}_{i', j-1} = Y_{i,j} \leq Y_{i, j'} - 1 = Y^{(x)}_{i', j' - 1} - 1.\]  Since $Y^{(x)}$ is even-distinguished, it follows that $Y^{(x)}_{i', j-1}$ is not $E$-raisable in $Y^{(x)}$, so $Y_{i,j}$ is not $E$-raisable in $Y$.  
		
		If $j = 1$, assume for the sake of contradiction that $Y_{i,j}$ is $E$-raisable.  Setting $(x, i') := \mathcal{P}(\alpha, \iota)(i)$, we obtain $i' = 1$, so $i = p_{(x,1)}$.  Then \[\mathcal{Y}^{(x)}_{1, 1} = Y_{i, 1} \leq Y_{i, j'} - 1 = \mathcal{Y}^{(x)}_{1, j'} - 1,\] which contradicts Lemma~\ref{bigentry}.  
		\item Suppose $j < j'$ are even and $Y_{i, j} \geq Y_{i, j'} + 1$.  As above, note that there are at least two boxes in the $i^{\text{th}}$ row of $Y$.  Setting $(x, i') := \mathcal{S}(\alpha, \sigma, \iota)(i)$, we again obtain $i' > 0$, so $i = i_{(x,i')}$.  Thus, \[Y^{(x)}_{i', j-1} = Y_{i,j} \geq Y_{i, j'} + 1 = Y^{(x)}_{i', j'-1} + 1.\]  Since $Y^{(x)}$ is even-distinguished, it follows that $Y^{(x)}_{i', j-1}$ is not $E$-lowerable in $Y^{(x)}$, so $Y_{i,j}$ is not $E$-lowerable in $Y$.  
	\end{enumerate}
	\item For both parts of this condition, it suffices to address the case $j = 1$; the condition holds for $j > 1$ by the inductive hypothesis (as in the proof of condition (2)).  
	\begin{enumerate}
		\item Suppose $Y_{i,1} \leq Y_{i,j'} - 2$.  If $j'$ is odd, then $Y_{i,1}$ is not $E$-raisable by condition (2).  Otherwise, $j'$ is even, and we see from Theorem~\ref{differences} that \[Y_{i, 1} \leq Y_{i, j'} - 2 \leq Y_{i, j'-1} - 2.\]  Thus, $j' -1 > 1$, and, again invoking condition (2), we find $Y_{i,1}$ is not $E$-raisable.  
		\item Suppose $Y_{i,1} \geq Y_{i,j'} + 2$, and assume for the sake of contradiction that $Y_{i,1}$ is $E$-lowerable.  Setting $(x,i') := \mathcal{P}(\alpha, \iota)(i)$, we obtain $i' = \ell^{\circ}_x$, so $i = p_{(x, \ell^{\circ}_x)}$.  By Theorem~\ref{differences},
		\begin{align} \label{compare}
		\mathcal{Y}^{(x)}_{\ell^{\circ}_x, 2} = Y_{i, 2} \geq Y_{i, 1} - 1 \geq Y_{i, j'} + 1 = \mathcal{Y}^{(x)}_{\ell^{\circ}_x, j'} + 1.
		\end{align}
		
		Note that $\mathcal{Y}^{(x)}_{\ell^{\circ}_x, 2}$ is $E$-lowerable in $\mathcal{Y}^{(x)}$ (\textit{even if} $Y_{i,2}$ is \textit{not} $E$-lowerable in $Y$).  Thus, if $j'$ is even, then $j' > 2$, and Equation~\ref{compare} contradicts condition (2).  Otherwise, $j'$ is odd, and \[\mathcal{Y}^{(x)}_{\ell^{\circ}_x, 2} \geq \mathcal{Y}^{(x)}_{\ell^{\circ}_x,j'} + 1 \geq \mathcal{Y}^{(x)}_{\ell^{\circ}_x,j'-1} + 1,\] which means $j' - 1 > 2$, and again yields a contradiction with condition (2).   
	\end{enumerate}
	\item Clearly this condition holds for $j=1$.  Therefore, since $Y^{(x)}$ is even-distinguished for all $1 \leq x \leq \mathcal{k}$, it suffices to show $x < x'$ implies $Y^{(x)}_{i, j} \geq Y^{(x')}_{i', j} + 2$.
	
	We claim that $Y^{(x)}_{i, j} \geq Y^{(x)}_{\ell'_x, 1}$.  To see this, note that Lemma~\ref{collapse} tells us that there exists a positive integer $\mathcal{k}_x$ and pairs of integer sequences \[\left(\alpha^{(x;1)}, \nu^{(x;1)}\right), \ldots, \left(\alpha^{(x;\mathcal{k}_x)}, \nu^{(x; \mathcal{k}_x)}\right)\] such that the diagram pairs $\left(X^{(x;y)}, Y^{(x;y)}\right) := \mathcal{A}_1 \left(\alpha^{(x;y)}, \nu^{(x;y)} \right)$ satisfy the following conditions: For all $1 \leq y \leq \mathcal{k}_x$, the entries in the first column of $X^{(x;y)}$ are all equal, and \[Y^{(x)} = \operatorname{Cat}\left(Y^{(x;1)}, \ldots, Y^{(x; \mathcal{k}_x)}\right).\]
	
	Let $y$ be chosen so that the $i^{\text{th}}$ row of $Y^{(x)}$ is contained in $Y^{(x;y)}$.  By Lemma~\ref{smallentry}, $Y^{(x)}_{i,j}$ is greater than or equal to the bottommost entry in the first column of $Y^{(x;y)}$.  This entry belongs to the first column of $Y^{(x)}$, so it is itself greater than or equal to $Y^{(x)}_{\ell'_x,1}$, which proves the claim.  
	
	By Theorem~\ref{differences}, \[Y^{(x)}_{\ell'_x, 1} = Y_{i_{(x,\ell'_x)}, 2} \geq Y_{i_{(x,\ell'_x)},1} - 1 \geq Y_{p_{(x,\ell^{\circ}_x)}, 1} - 1.\]  
	
	Furthermore, \[Y_{p_{(x', 1)}, 1} + 2 = \mathcal{Y}^{(x')}_{1,1} + 2 \geq Y^{(x')}_{i',j} + 2,\] where the inequality follows from Lemma~\ref{bigentry} because $Y^{(x')}$ is contained in $\mathcal{Y}^{(x')}$, as shown in Lemma~\ref{collapse}.  
	
	Thus, it suffices to show $Y_{p_{(x,\ell^{\circ}_x)}, 1} - 1 \geq Y_{p_{(x',1)}, 1} + 2$.  By definition, \[\mathcal{P}(\alpha, \iota)(p_{(x,\ell^{\circ}_x)}) = (x, \ell^{\circ}_x) \quad \text{and} \quad \mathcal{P}(\alpha, \iota)(p_{(x',1)}) = (x', 1).\]  Hence $x < x'$ implies $X_{p_{(x,\ell^{\circ}_x)}, 1} > X_{p_{(x',1)}, 1}$, so $X_{p_{(x,\ell^{\circ}_x)}, 1} - 1 \geq X_{p_{(x',1)}, 1}$, whence the result follows.  
\end{enumerate}
\end{proof}

\begin{cor} \label{dist}
Let $\alpha \vdash n$, and let $\nu \in \Omega_{\alpha}$.  Then $\mathcal{A}(\alpha, \nu)$ is distinguished.  
\end{cor}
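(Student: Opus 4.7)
The plan is to obtain the corollary as an immediate specialization of Theorem~\ref{puttog}. Recall from the opening of Section 4.1 the convention that $\mathcal{A}(\alpha, \nu)$ abbreviates $\mathcal{A}(\alpha, \nu, -1)$, and recall from the paragraph following the two parallel versions of Definition~\ref{dis} the convention that ``distinguished'' (without qualifier) means ``odd-distinguished.'' Theorem~\ref{puttog} asserts that $\mathcal{A}(\alpha, \nu, -1)$ is odd-distinguished for every input $(\alpha, \nu) \in \mathbb{N}^{\ell} \times \mathbb{Z}^{\ell}$. Since the hypothesis $\alpha \vdash n$, $\nu \in \Omega_{\alpha}$ is strictly stronger than the hypothesis of Theorem~\ref{puttog} (we have $\Omega_{\alpha} \subset \mathbb{Z}^{\ell}$ and partitions of $n$ lie in $\mathbb{N}^{\ell}$), the corollary follows at once.

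Consequently, the proof will consist of a single sentence invoking Theorem~\ref{puttog} together with the two notational conventions. No computation, no induction, and no appeal to any of the auxiliary results of Section~5 is required at this stage; the entire combinatorial content has already been packaged into Theorem~\ref{puttog}. There is therefore no ``main obstacle'' to surmount in the proof of Corollary~\ref{dist} itself --- the genuine work lies upstream in the proof of Theorem~\ref{puttog}, where conditions (1)--(4) of Definition~\ref{dis} are verified by induction on $\max\{\alpha_1, \ldots, \alpha_{\ell}\}$ using Lemma~\ref{collapse} (to identify $Y$ with the concatenation $\operatorname{Cat}(\mathcal{Y}^{(1)}, \ldots, \mathcal{Y}^{(\mathcal{k})})$), Lemmas~\ref{bigentry} and~\ref{smallentry} (to bound entries across branches), and Theorem~\ref{differences} (to control column-to-column increments).

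The role of Corollary~\ref{dist} in the broader argument is to license two earlier invocations that implicitly required distinguishedness of the left diagram: first, the assertion of Equation~\ref{compat} for $X = p_1 \mathcal{A}(\alpha, \nu)$ (which needs the entries of $X$ to decrease down each column, i.e.\ condition (4) of Definition~\ref{dis}), used in the proof of Corollary~\ref{eta}; and second, the identification of $\mathcal{A}(\alpha, \nu)$ with Achar's output $\mathsf{A}(\alpha, \nu)$ in Theorem~\ref{mata}, since --- as explained in Remark~\ref{weak} --- our (stronger) definition of distinguished guarantees that any diagram pair so classified is also distinguished in Achar's sense \cite{Acharj}. Hence the statement of Corollary~\ref{dist} is the correct one-line closure of the loop started at the end of Section~4.
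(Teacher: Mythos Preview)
Your proposal is correct and matches the paper's approach: the corollary is stated immediately after Theorem~\ref{puttog} with no proof given, precisely because it is the specialization you describe. Your explanation of the notational conventions (that $\mathcal{A}(\alpha,\nu)$ abbreviates $\mathcal{A}(\alpha,\nu,-1)$ and that ``distinguished'' means ``odd-distinguished'') is exactly what makes the deduction immediate.
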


Corollary~\ref{dist}, in view of Proposition~\ref{kap}, suffices to prove Theorem~\ref{mata} --- thanks to the following theorem.  

\begin{thm}[Achar \cite{Acharj}, Theorem 8.8] \label{achar}
Let $\alpha \vdash n$, and let $\nu \in \Omega_{\alpha}$.  Then $\mathsf{A}(\alpha, \nu)$ is the unique distinguished diagram of shape-class $\alpha$ in $\kappa^{-1}(\nu)$.  
\end{thm}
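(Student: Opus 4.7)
The plan is to prove Theorem~\ref{achar} by separately establishing existence and uniqueness.

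For \emph{existence}, I would verify that Achar's algorithm $\mathsf{A}(\alpha, \nu)$ starts from some diagram in $\kappa^{-1}(\nu)$ (for instance, the one whose $i^{\text{th}}$ row is filled by integers closest to $\nu_i/\alpha_i$) and performs only local moves—column swaps and box shifts—that preserve $\kappa$. It then suffices to produce (a) a monovariant (Achar's six-part lexicographic vector, whose primary coordinate is $\|hX + 2\rho_\alpha\|$) that strictly decreases under each legal move, and (b) the structural claim that any diagram failing one of the four conditions of Definition~\ref{dis} admits a legal move. Together these force termination at a distinguished diagram while keeping $\kappa(X) = \nu$ throughout.

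For \emph{uniqueness}, I would argue by induction on $\alpha_1 = s$, the number of columns. Let $X$ and $X'$ be distinguished diagrams of shape-class $\alpha$ with $\kappa(X) = \kappa(X') = \nu$; write $Y := EX$ and $Y' := EX'$. The goal is to show $X = X'$ by first matching their initial columns and then appealing to induction on the column-reductions $\mathcal{T}_2(Y)$ and $\mathcal{T}_2(Y')$ (which are distinguished on residual data $(\alpha', \nu')$ of strictly smaller first-column height). The key steps, in order, are:

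\emph{Step 1 (row rigidity).} Fix a row $i$ of $Y$. Condition (1) of Definition~\ref{dis} forces the entries $Y_{i,1}, \ldots, Y_{i,\alpha_i}$ to take at most two consecutive integer values in a rigid zig-zag pattern; together with the row-sum constraint $\sum_j Y_{i,j} = \nu_i + \sum_j(\alpha^*_j - 2p_{i,j} + 1)$ (where $p_{i,j}$ is the position of the $i^{\text{th}}$ row's $j^{\text{th}}$ box within the $j^{\text{th}}$ column of $Y$), this determines the multiset of row entries once the row's \emph{position} in the diagram is fixed.

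\emph{Step 2 (first-column rigidity).} Show by a greedy/exchange argument that the multiset of first-column entries of $Y$ is determined by $(\alpha, \nu)$ alone, independent of $X$. Concretely, interpret conditions (2b), (3b), and (4) as asserting that the largest first-column entry is exactly the ceiling $\mathcal{C}_{-1}(\alpha,\nu,i_\star,\varnothing,\{1,\dots,\ell\}\setminus\{i_\star\}) - 1$ for a lexicographically maximal index $i_\star$, and proceed inductively. Any alternative assignment must, via condition (3), create a raisable or lowerable entry, contradicting distinguishedness.

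\emph{Step 3 (row ordering).} Condition (4) forces the first column of $Y$ to be strictly decreasing (with gaps $\geq 2$), pinning down the assignment of first-column values to row indices. Combined with Step 2, this determines the entire first column of both $X$ and $X'$; in particular, $X_{i,1} = X'_{i,1}$ for all $i$.

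\emph{Step 4 (induction).} Pass to $\mathcal{T}_2(Y)$ and $\mathcal{T}_2(Y')$. These are distinguished diagrams of shape-class $\alpha'$ (in the sense of Definition~\ref{dis}, with the odd/even distinction flipping), and Corollary~\ref{multiforward} shows they have the same $\kappa$-image. The inductive hypothesis concludes $\mathcal{T}_2(Y) = \mathcal{T}_2(Y')$, hence $X = X'$.

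The main obstacle will be Step~2: the $E$-raisable and $E$-lowerable conditions couple rows across columns, so the claim that the first-column multiset is determined purely by $(\alpha, \nu)$ is not local. The exchange argument has to simultaneously juggle (i) the two-valued row-rigidity from Step~1, (ii) the dominance ordering of $\kappa$ within each multiplicity class of $\alpha$, and (iii) the global monotonicity from condition (4). I expect the cleanest proof requires isolating, for each candidate first-column value, a canonical ``witness'' row via the ranking functions $\mathcal{R}_{\pm 1}$ and showing any deviation activates either a raise in a row $i$ with $Y_{i,j} \le Y_{i,j'} - 2$ or a lower in some row below—thereby contradicting (3). This is essentially the combinatorial heart of Theorem~\ref{mata}, and one would expect the proof to mirror the arguments in Lemmas~\ref{collapse} and~\ref{bigentry}, run in reverse.
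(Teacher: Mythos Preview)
The paper does not prove this theorem at all: it is quoted verbatim from Achar \cite{Acharj}, Theorem~8.8, and used as a black box to deduce Theorem~\ref{mata} from Corollary~\ref{dist} and Proposition~\ref{kap}. So there is no ``paper's own proof'' to compare against; you have sketched an argument for a result the author deliberately imports from the literature.

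That said, your sketch has real gaps if taken as a standalone proof. In Step~4 you invoke Corollary~\ref{multiforward} to match the $\kappa$-images of $\mathcal{T}_2(Y)$ and $\mathcal{T}_2(Y')$, but that corollary is proved only for the specific output $p_1\mathcal{A}(\alpha,\nu)$, not for an arbitrary distinguished diagram; you would need an independent argument that deleting the first column of \emph{any} distinguished diagram yields a diagram whose row-sums are determined by $(\alpha,\nu)$ and the first-column entries. Similarly, the assertion that $\mathcal{T}_2(Y)$ is (even-)distinguished whenever $Y$ is odd-distinguished is not established anywhere in this paper for arbitrary $Y$ --- Theorem~\ref{puttog} proves it only for algorithm outputs, and the proof there relies on Lemma~\ref{collapse}, which is again algorithm-specific. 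Finally, in Step~1 your row-sum formula involves the column positions $p_{i,j}$, which depend on the global shape of the diagram (i.e., on how the rows of various lengths are interleaved), so the multiset of entries in row $i$ is \emph{not} determined by $(\alpha_i,\nu_i)$ alone; this circularity is exactly what makes Step~2 hard, and your exchange-argument outline does not yet break it. Achar's actual proof in \cite{Acharj} handles uniqueness by a different route, working directly with the monovariant and showing that any non-distinguished diagram admits a strictly improving move, so that two distinct distinguished diagrams with the same $\kappa$ would contradict minimality.
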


\begin{rem}
Again (cf. Remark~\ref{weak}), Achar's definition of distinguished is weaker than ours, but it doesn't matter: $p_1 \mathcal{A}(\alpha, \nu)$ is distinguished by our definition, so it is distinguished by Achar's definition, so $p_1\mathcal{A}(\alpha, \nu) = \mathsf{A}(\alpha, \nu)$.  
\end{rem}

This completes the proof of Theorem~\ref{mata}.  \qed

\vfill \eject

\appendix

\section{Afterword}

Achar's algorithm $\mathsf{A}$ computes a map \[\Omega \rightarrow \bigcup_{\ell = 1}^n D_{\ell}.\]  On input $(\alpha, \nu) \in \Omega$, the corresponding dominant weight $\gamma(\alpha, \nu) \in \Lambda^+$ is obtained from the output by taking $\eta E \mathsf{A}(\alpha, \nu)$.  

Achar's algorithm for $\gamma^{-1}$, which we denote by $\mathsf{B}$, computes a map \[\Lambda^+ \rightarrow \bigcup_{\ell = 1}^n D_{\ell}.\]  On input $\lambda \in \Lambda^+$, the corresponding pair $\gamma^{-1}(\lambda) \in \Omega$ is obtained from the output by taking $(\delta E^{-1} \mathsf{B}(\lambda), \kappa E^{-1} \mathsf{B}(\lambda))$, where the map $\delta$ sends a diagram to its shape-class.  

Consider the following diagram.  

\[\Omega \xleftarrow{(\delta p_1, \kappa p_1)} \bigcup_{\ell=1}^n D_{\ell} \times D_{\ell} \xrightarrow{\eta p_2} \Lambda^+\]

The algorithms $\mathsf{A}$ and $\mathsf{B}$ yield sections $(\mathsf{A}, E \mathsf{A})$ and $(E^{-1}\mathsf{B}, \mathsf{B})$ of the projections $(\delta p_1, \kappa p_1)$ and $\eta p_2$ onto $\Omega$ and $\Lambda^+$, respectively, for which \[\eta p_2 \circ (\mathsf{A}, E \mathsf{A}) = \gamma \quad \text{and} \quad (\delta p_1, \kappa p_1) \circ (E^{-1} \mathsf{B}, \mathsf{B}) = \gamma^{-1}.\]  That the maps computed by $\mathsf{A}$ and $\mathsf{B}$ play symmetric roles suggests that the algorithms themselves should exhibit structural symmetry.  Unfortunately, they do not.  

We address this incongruity by introducing the algorithm $\mathcal{A}$, which computes the section $(\mathsf{A}, E \mathsf{A})$, yet has the same recursive structure as $\mathsf{B}$: Both $\mathcal{A}$ and $\mathsf{B}$ recur after determining the entries in the first column of their output diagram(s).\footnote{Achar \cite{Acharj} phrases the instructions for $\mathsf{B}$ to use iteration rather than recursion, but they amount to carrying out the same computations.  } Thus, the weight-diagrams version of our algorithm achieves structural parity with $\mathsf{B}$; the integer-sequences version $\mathfrak{A}$ is a singly recursive simplification that sidesteps weight diagrams altogether.  

Having established that our algorithm is correct, we are mindful that Einstein's admonition, ``Everything should be made as simple as possible, but not simpler,'' will have the last word.  We believe the appeal of our weight-by-weight, column-by-column approach is underscored by its consonance with Achar's algorithm $\mathsf{B}$, which has stood the test of time.  To demonstrate the complementarity between $\mathcal{A}$ and $\mathsf{B}$, we offer the following description of $\mathsf{B}$.  More details can be found in Achar \cite{Acharj}, section 6.  

\subsubsection*{The algorithm}

We define a recursive algorithm $\mathsf{B}$ that computes a map \[\mathbb{Z}^n_{\operatorname{dom}} \times \lbrace \pm 1 \rbrace \rightarrow \bigcup_{\ell = 1}^n D_{\ell}\] by filling in the entries in the first column of its output diagram and using recursion to fill in the entries in the remaining columns.  Whenever we write $\mathsf{B}(\lambda)$, we refer to $\mathsf{B}(\lambda, -1)$.  

The algorithm $\mathsf{B}$ is multiply recursive and begins by dividing a weakly decreasing integer sequence into \textit{clumps}.  From each clump, it builds a diagram by first extracting a maximal-length \textit{majuscule} sequence to comprise the entries of the first column, and then calling itself on the clump's remains.  To obtain the output, it concatenates the diagrams constructed from all the clumps.   

\begin{df}
A subsequence of a weakly decreasing integer sequence is \textit{clumped} if no two of its consecutive entries differ by more than $1$.  A clumped subsequence is a \textit{clump} if it is not contained in a longer clumped subsequence.  
\end{df}

\begin{df}
An integer sequence $\iota = [\iota_1, \ldots, \iota_{\ell}]$ is \textit{majuscule} if $\iota_i - \iota_{i+1} \geq 2$ for all $1 \leq i \leq \ell - 1$.  
\end{df}

We are ready to describe $\mathsf{B}$.  

On input $(\lambda, \epsilon)$, the algorithm designates $\mathsf{c}$ the number of distinct clumps in $\lambda$.  For all $1 \leq x \leq \mathsf{c}$, it designates $n_x$ the number of entries in the $x^{\text{th}}$ clump.

Let $\lambda^{(x)}$ denote the $x^{\text{th}}$ clump, and $\mathcal{Y}^{(x)}$ the diagram to be built from $\lambda^{(x)}$.  For all $1 \leq x \leq \mathsf{c}$, the algorithm obtains a majuscule sequence $\iota^{(x)}$ from $\lambda^{(x)}$ as follows:
\begin{itemize}
\item If $\epsilon = -1$, then $\iota^{(x)}$ is the maximal-length majuscule sequence contained in $\lambda^{(x)}$ that begins with $\lambda^{(x)}_1$; 
\item If $\epsilon = 1$, then $\iota^{(x)}$ is the maximal-length majuscule sequence contained in $\lambda^{(x)}$ that ends with $\lambda^{(x)}_{n_x}$.  
\end{itemize}

Then it sets \[\mathcal{Y}^{(x)}_{i, 1} := \iota^{(x)}_i\] for all $1 \leq i \leq \ell_x$, where $\ell_x$ is the length of $\iota^{(x)}$.  This determines the entries in the first column of $\mathcal{Y}^{(x)}$.  

If $\iota^{(x)} = \lambda^{(x)}$, the diagram $\mathcal{Y}^{(x)}$ is complete.  Otherwise, the algorithm arranges the elements of the (multiset) difference $\lambda^{(x)} \setminus \iota^{(x)}$ in weakly decreasing order, leaving a weakly decreasing integer sequence $\bar{\lambda}^{(x)}$, and it sets \[Y^{(x)} := \mathsf{B}(\bar{\lambda}^{(x)}, -\epsilon).\]  

It proceeds to attach $Y^{(x)}$ to the first column of $\mathcal{Y}^{(x)}$.  For all $i'$ such that $Y^{(x)}$ has an $i'^{\text{th}}$ row, the algorithm finds the unique $i \in \lbrace 1, \ldots, \ell_x \rbrace$ such that $Y^{(x)}_{i',1} - \mathcal{Y}^{(x)}_{i,1} \in \lbrace 0, \epsilon \rbrace$.  Then, for all $j'$ such that $Y^{(x)}$ has an entry in the $i'^{\text{th}}$ row and $j'^{\text{th}}$ column, it sets \[\mathcal{Y}^{(x)}_{i,j'+1} := Y^{(x)}_{i',j'}.\]

Finally, it sets \[Y := \operatorname{Cat}\left (\mathcal{Y}^{(1)}, \ldots, \mathcal{Y}^{(\mathcal{c})} \right )\] and returns $Y$.  

\begin{exam}
In Example~\ref{acharexam}, we found \[\gamma([4,3,2,1,1], [15,14,9,4,4]) = [8,7,6,6,5,4,3,3,2,2,0].\]

Here we set $\lambda := [8,7,6,6,5,4,3,3,2,2,0]$ and compute $\mathsf{B}(\lambda)$.  

We start by observing that $\lambda$ has $2$ clumps, \[\lambda^{(1)} = [8,7,6,6,5,4,3,3,2,2] \quad \text{and} \quad 
\lambda^{(2)} = [0].\]  

The maximal-length majuscule sequence contained in $\lambda^{(1)}$ that begins with $\lambda^{(1)}_1 = 8$ is $\iota^{(1)} = [8,6,4,2]$.  Hence \[\big[\mathcal{Y}^{(1)}_{1,1}, \mathcal{Y}^{(1)}_{2,1}, \mathcal{Y}^{(1)}_{3,1}, \mathcal{Y}^{(1)}_{4,1} \big] = [8,6,4,2].\]

Upon removing $\iota^{(1)}$ from $\lambda^{(1)}$, we see that \[\bar{\lambda}^{(1)} = [7,6,5,3,3,2].\]  

As it happens, $\mathsf{B}(\bar{\lambda}^{(1)}, 1)$ looks as follows.  

\begin{figure}[h]
	\ytableausetup{centertableaux, textmode}
	$Y^{(1)} = $ \begin{ytableau}
		7\\
		5 & 6\\
		2 & 3 & 3
	\end{ytableau}
	\caption{The diagram obtained from the remains of the first clump}
\end{figure}  

We complete $\mathcal{Y}^{(1)}$ by attaching $Y^{(1)}$ to the first column of $\mathcal{Y}^{(1)}$.  

\begin{figure}[h]
	\ytableausetup{centertableaux, textmode}
	$\mathcal{Y}^{(1)} = $ \begin{ytableau}
		8 & 7\\
		6 & 5 & 6\\
		4 \\
		2 & 2 & 3 & 3
	\end{ytableau}
	\caption{The diagram obtained from the first clump}
\end{figure}  

Since $\lambda^{(2)}$ consists of a single entry, it follows that $\iota^{(2)} = \lambda^{(2)}$, so $\mathcal{Y}^{(2)}$ consists of a single box.  

\begin{figure}[h]
	\ytableausetup{centertableaux, textmode}
	$\mathcal{Y}^{(2)} = $ \begin{ytableau}
		0 
	\end{ytableau}
	\caption{The diagram obtained from the second clump}
\end{figure}

Concatenating $\mathcal{Y}^{(1)}$ and $\mathcal{Y}^{(2)}$, we arrive at $Y$.  

\begin{figure}[h]
	\ytableausetup{centertableaux, textmode}
	$Y = $ \begin{ytableau}
		8 & 7\\
		6 & 5 & 6\\
		4 \\
		2 & 2 & 3 & 3 \\
		0
	\end{ytableau}
	\caption{The diagram obtained from the recursion}
\end{figure}  
\end{exam}

Comparing our result with that of Example~\ref{cont}, we see d\'ej\`a vu --- $\mathcal{A}(\alpha, \nu) = (E^{-1}Y, Y)$.  This corroborates that the sections $\mathcal{A}$ and $(E^{-1} \mathsf{B}, \mathsf{B})$ send $(\alpha, \nu)$ and $\lambda$, respectively, to the same diagram pair whenever $(\alpha, \nu)$ and $\lambda$ correspond under the Lusztig--Vogan bijection, as in this case they do.  
\vfill \eject

\section*{Acknowledgments}

This research is based on the author's doctoral dissertation at the Massachusetts Institute of Technology.  His gratitude to his advisor David A. Vogan Jr. --- for suggesting the problem and offering invaluable help and encouragement along the way to a solution --- cannot be overstated.  Thanks are also due to Roman Bezrukavnikov and George Lusztig for serving on the author's thesis committee and offering thoughtful remarks.  

John W. Chun, the author's late grandfather, emigrated from Korea and fell in love with the author's grandmother and American literature.  He earned a doctorate in English at the Ohio State University, becoming the first member of the author's family to be awarded a Ph.D.  This article is dedicated to his memory, fondly and with reverence.  

Throughout his graduate studies, the author was supported by the US National Science Foundation Graduate Research Fellowship Program.

\end{document}